\newtheorem{theorem}{Theorem}[section]
\newtheorem{corollary}[theorem]{Corollary}
\newtheorem{proposition}[theorem]{Proposition}
\newtheorem{lemma}[theorem]{Lemma}
\theoremstyle{definition}    
\newtheorem{definition}[theorem]{Definition}
\theoremstyle{remark}
\newtheorem{remark}[theorem]{Remark}
\newtheorem{example}[theorem]{Example}
\newcommand{\pair}[2]{\langle #1, #2 \rangle}
\newcommand{\ignore}[1]{}
\newcommand\m[1]{\begin{pmatrix}#1\end{pmatrix}} 
\newcommand{\ol}[1]{\overline{#1}}
\renewcommand{\sf}[1]{\mathsf{#1}}
\newcommand{\wh}[1]{\widehat{#1}}
\newcommand{\scr}[1]{\mathscr{#1}}
\newcommand{\mf}[1]{\mathfrak{#1}}
\newcommand{\tn}[1]{\textnormal{#1}}
\renewcommand{\i}{{\mathrm{i}}}
\def\la{\ensuremath{\langle}}
\def\ra{\ensuremath{\rangle}}
\def\d{\ensuremath{\mathrm{d}}}
\def\Ad{\ensuremath{\textnormal{Ad}}}
\def\Spin{\ensuremath{\textnormal{Spin}}}
\def\C{\ensuremath{\mathcal{C}}}
\def\D{\ensuremath{\mathcal{D}}}
\def\E{\ensuremath{\mathcal{E}}}
\def\F{\ensuremath{\mathcal{F}}}
\def\I{\ensuremath{\mathcal{I}}}
\def\L{\ensuremath{\mathcal{L}}}
\def\R{\ensuremath{\mathcal{R}}}
\def\T{\ensuremath{\mathcal{T}}}
\def\U{\ensuremath{\mathcal{U}}}
\def\W{\ensuremath{\mathcal{W}}}
\def\Z{\ensuremath{\mathcal{Z}}}
\def\bC{\ensuremath{\mathbb{C}}}
\def\bD{\ensuremath{\mathbb{D}}}
\def\bR{\ensuremath{\mathbb{R}}}
\def\bT{\ensuremath{\mathbb{T}}}
\def\bZ{\ensuremath{\mathbb{Z}}}
\def\dE{\ensuremath{\bm{E}}}
\def\dF{\ensuremath{\bm{F}}}
\def\dG{\ensuremath{\bm{G}}}
\def\dK{\ensuremath{\bm{K}}}
\def\dL{\ensuremath{\bm{L}}}
\def\dN{\ensuremath{\bm{N}}}
\def\dR{\ensuremath{\bm{R}}}
\def\dV{\ensuremath{\bm{V}}}
\def\dW{\ensuremath{\bm{W}}}
\def\dX{\ensuremath{\bm{X}}}
\def\dY{\ensuremath{\bm{Y}}}
\def\dnabla{\ensuremath{\bm{\nabla}}}
\def\dz{\ensuremath{\bm{0}}}
\def\dsigma{\ensuremath{\bm{\sigma}}}
\def\dDelta{\ensuremath{\bm{\Delta}}}
\def\dTheta{\ensuremath{\bm{\Theta}}}
\def\dgamma{\ensuremath{\bm{\gamma}}}
\def\vep{\ensuremath{\varepsilon}}
\def\End{\ensuremath{\textnormal{End}}}
\def\Hom{\ensuremath{\textnormal{Hom}}}
\def\ker{\ensuremath{\textnormal{ker}}}
\def\supp{\ensuremath{\textnormal{supp}}}
\def\id{\ensuremath{\textnormal{id}}}
\def\Tr{\ensuremath{\textnormal{Tr}}}
\def\tr{\ensuremath{\textnormal{tr}}}
\def\Cl{\ensuremath{\textnormal{Cl}}}
\def\bCl{\ensuremath{\bC\textnormal{l}}}
\def\gr{\ensuremath{\textnormal{gr}}}
\def\Ahat{\ensuremath{\widehat{\textnormal{\textbf{A}}}}}
\newcommand\fakeslant[1]{%
  \pdfliteral{1 0 0.167 1 0 0 cm}#1\pdfliteral{1 0 -0.167 1 0 0 cm}}
\newcommand\mathbbsl[1]{\mathbb{\fakeslant{#1}}}
\def\sbN{\ensuremath{\mathbbsl{N}}}
\def\sbW{\ensuremath{\mathbbsl{W}}}
\def\sbE{\ensuremath{\mathbbsl{E}}}
\def\sbG{\ensuremath{\mathbbsl{G}}}
\title{A fixed-point formula for Dirac operators on Lie groupoids}
\author{Ahmad Reza Haj Saeedi Sadegh}
\address{Department of Mathematics, Dartmouth College, U.S.}
\email{ahmadreza.hajsaeedisadegh@dartmouth.edu}
\author{Shiqi Liu}
\address{School of Mathematics and Statistics, University of New South Wales, Australia}
\email{shiqi.liu3@unsw.edu.au}
\author{Yiannis Loizides}
\address{Department of Mathematical Sciences, George Mason University, U.S.}
\email{yloizide@gmu.edu}
\author{Jesus Sanchez}
\address{Department of Mathematics and Statistics, Washington University in St. Louis, U.S.}
\email{jesuss@wustl.edu}
\dedicatory{Dedicated to Nigel Higson on the occasion of his $60^{th}$ birthday.}
\begin{document}
\sloppy
\maketitle

\begin{abstract}
We study equivariant families of Dirac operators on the source fibers of a Lie groupoid with a closed space of units and equipped with an action of an auxiliary compact Lie group. We use the Getzler rescaling method to derive a fixed-point formula for the pairing of a trace with the K-theory class of such a family. For the pair groupoid of a closed manifold, our formula reduces to the standard fixed-point formula for the equivariant index of a Dirac operator. Further examples involve foliations and manifolds equipped with a normal crossing divisor.
\end{abstract}


\section{Introduction}
Let $M$ be an even-dimensional closed oriented Riemannian manifold equipped with an isometric action of a compact Lie group $\sf{K}$. Let $(E=E^+\oplus E^-,c\colon T^*M\rightarrow \End(E))$ be a $\sf{K}$-equivariant $\bZ/2$-graded Clifford module with compatible connection $\nabla$. The corresponding Dirac operator $D$, given by the composition
\[ \Gamma(E)\xrightarrow{\nabla}\Gamma(T^*M\otimes E)\xrightarrow{c}\Gamma(E),\]
is a $\sf{K}$-equivariant elliptic differential operator, and therefore has a well-defined $\sf{K}$-equivariant index, defined as the class of the $\bZ/2$-graded kernel $[\ker(D)]=[\ker(D^+)]-[\ker(D^-)]\in R(\sf{K})$ in the representation ring of $\sf{K}$. The Atiyah-Bott-Segal-Singer theorem gives a formula for the character of this representation evaluated at $\gamma \in \sf{K}$ as an integral of characteristic classes over the fixed-point set $M^\gamma$.

One approach to the index formula is the heat kernel method. By the McKean-Singer formula, the character of $[\ker(D)]$ evaluated at $\gamma \in \sf{K}$ equals the supertrace $\Tr_s(\gamma \exp(-tD^2))$ for all $t>0$, and the fixed-point formula for the index can be obtained by calculating the $t\rightarrow 0^+$ limit. The method leads to a local refinement of the index theorem. Various heat kernel proofs of the fixed-point formula were given in well-known work by several authors beginning in the 1970s through the 1980s, notably Patodi, Donnelly, Gilkey, Bismut, Berline and Vergne (cf. \cite[Chapter 6]{BerlineGetzlerVergne}, the historical notes and references therein).

The Getzler rescaling method for calculating the $t\rightarrow 0^+$ limit of the heat supertrace was carried out in the equivariant case by Lafferty-Yu-Zhang \cite{lafferty1992lefschetz}. One of the aims of this article is to give a new treatment of this result; this may be of interest even to readers mostly interested in the classical setting and less so in the general Lie groupoid case. Such readers may go through the article making appropriate substitutions throughout: $G$ with $M\times M$, $A$ with $TM$, $\tau$ with integration over the diagonal in $M\times M$, and so on.

Our approach makes use of a reformulation of Getzler's method introduced recently by Higson and Yi \cite{higsonyi} (see also \cite{ludewig2020short} for another interesting recent application). The idea in this approach is to build the desirable features of Getzler's rescaling into the definition of a vector bundle over Connes' tangent groupoid $\bT M$ in such a way that all the Getzler-rescalings of a section/operator, along with its Getzler-rescaled limit, fit together into a smooth global section/operator. We adapt this approach to the equivariant context and find that it usefully systematizes the fixed-point calculations. For example the fixed-point contributions involve an easy-to-overlook correction factor that does not vanish in the Getzler-rescaling limit, and is related to the holonomy of a geodesic triangle connecting a triple of points $x,y,\gamma x$ near a $\gamma$-fixed point (see for example \cite[Lemma 3.3]{lafferty1992lefschetz}). The Higson-Yi framework neatly organizes the effects of holonomy and is such that one is prevented from accidentally omitting this factor.

We apply Getzler's rescaling method to derive a fixed-point formula in a more general context involving a longitudinal family of Dirac operators on the source fibers of a Lie groupoid. Examples of Lie groupoids are abundant; to name but a few one has (i) $\tn{Pair}(M)=M\times M$ the pair groupoid of $M$, (ii) $\tn{Hol}(\F)$ the holonomy groupoid of a foliation $\F$ of $M$, (iii) the standard Lie groupoid integrating Melrose's $b$-tangent bundle ${}^bTM$ for some embedded hypersurface $Z$, as well as variations thereof, for example involving a fibration of $Z$ over some base, or replacing $Z$ with a normal crossing divisor.

The classical fixed-point formula for the index is associated with the pair groupoid. The smooth convolution algebra $\Psi^{-\infty}(\tn{Pair}(M))$ of the pair groupoid is the algebra of smoothing operators on $M$. It carries a canonical trace given by integration of smoothing kernels along the diagonal in $M\times M$. The Dirac operator $D$ on $M$ determines a class $\tn{ind}(D) \in K_0^\sf{K}(\Psi^{-\infty}(\tn{Pair}(M)))$ in the $\sf{K}$-equivariant algebraic K-theory group, given by a standard formula (recalled in Section \ref{s:equivtracepairing}) involving a parametrix for $D$. The equivariant index equals the evaluation of the canonical trace on the class $\tn{ind}(D)$. Reformulated in this way, the setup generalizes easily to Lie groupoids.

Let $(r,s)\colon G\rightrightarrows M$ be a Lie groupoid with a closed space of units $M$ (the manifold $G$ is not required to be compact). Let $A=\ker(Ts)|_M \rightarrow M$ be the Lie algebroid of $G$. In the examples above, $A$ is (i) $TM$, (ii) $T\F$, (iii) the $b$-tangent bundle ${}^bTM$, respectively. Many familiar constructions for $TM$ carry over immediately to Lie algebroids. Given a fiber metric $h$ on $A$ and a bundle of modules $E\rightarrow M$ for the bundle of Clifford algebras $\Cl(A^*,h^*)$, one can define the `$A$-Dirac operator' $D$ as the composition
\[ \Gamma(E)\xrightarrow{\nabla} \Gamma(A^*\otimes E)\xrightarrow{c}\Gamma(E) \]
where $\nabla$ is an $A$-connection compatible with the Levi Civita $A$-connection. 

The operator $D$ is elliptic if and only if the anchor map $A\rightarrow TM$ is surjective. Regardless $D$ gives rise to a $G$-equivariant family $D^R$ of ordinary (in particular, elliptic) Dirac operators on the source fibers of the Lie groupoid (the relation between $D$ and $D^R$ is just a version, with vector bundle coefficients, of the correspondence between sections $X \in \Gamma(A)$ and right-invariant vector fields $X^R$ on $G$). In a suitable sense $\tn{ind}(D)\in K_0(\Psi^{-\infty}(G))$ may be thought of as the class associated to this elliptic family. Indeed in the special case $G=M\times_B M$ is the Lie groupoid associated to a fibration $M\rightarrow B$ with compact fibers, $D$ is a family of Dirac operators on the fibers, and the K-class $\tn{ind}(D)$ is equivalent to the class associated to the family by Atiyah and Singer. More generally for $G=\tn{Hol}(\F)$, $D$ is a family of Dirac operators on the leaves of the foliation, and the K-class $\tn{ind}(D)$ was introduced and studied by Connes (cf. \cite{ConnesBook}). In the example $A={}^bTM$ initiated by Melrose (cf. \cite{melrose1993atiyah}), $D$ restricts to an ordinary Dirac operator on the complement $M\backslash Z$, where the latter carries a complete metric with cylindrical ends (more generally with geometry at infinity depending on the chosen variant of ${}^bTM$).

A bisection of $G$ is an embedded submanifold $\gamma \subset G$ such that the source and range maps $(r,s)\colon G\rightrightarrows M$ restrict to diffeomorphisms from $\gamma$ to $M$. The set $\tn{Bis}(G)$ of bisections forms a (Fr{\'e}chet or diffeological) group with group operation induced by the composition law in $G$. For the pair groupoid $\tn{Bis}(\tn{Pair}(M))\simeq \tn{Diff}(M)$ is the diffeomorphism group of $M$. Let $\sf{K}$ be a compact Lie group. We refer to a smooth group homomorphism $\sf{K} \rightarrow \tn{Bis}(G)$ as an action of $\sf{K}$ on $G$ by bisections. In this setting it is straight-forward to define an equivariant K-class $\tn{ind}(D)\in K_0^\sf{K}(\Psi^{-\infty}(G))$.

A trace $\tau$ on $\Psi^{-\infty}(G)$ is a $\bC$-linear functional that vanishes on commutators. In this article we only consider traces that factor as restriction to the unit space $M\subset G$ composed with a continuous $\bC$-linear functional. A trace determines an `equivariant trace pairing'
\[ \pair{\tau}{-}\colon K_0^\sf{K}(\Psi^{-\infty}(G))\rightarrow C^\infty(\sf{K})^\sf{K},\]
where $C^\infty(\sf{K})^\sf{K}$ denotes conjugation-invariant smooth functions. We already mentioned the canonical trace for $G=\tn{Pair}(M)$. In the case $G=\tn{Hol}(\F)$, an example of such a trace is provided by a transverse measure, cf. \cite{ConnesBook}. For the example associated to the $b$-tangent bundle ${}^bTM$ of a hypersurface or more general normal crossing divisor, one has a trace defined using Cauchy principal value integration that we describe in Section \ref{s:normalcrossing}.

Our main result is a fixed-point formula for the equivariant trace pairing.
\begin{theorem}
\label{t:fixedptformulaintro}
Let $G$ be a Lie groupoid over a closed unit space $M$ with even rank $n$ oriented metrised Lie algebroid $(A,h)$. Let $\sf{K}$ be a compact Lie group that acts on $G$ by bisections preserving the metric and orientation on $A$. Let $\tau \colon \Psi^{-\infty}(G)\rightarrow \bC$ be a continuous trace that factors through restriction to $M$. Let $(E,c,\nabla)$ be a $\sf{K}$-equivariant $\Cl(A^*)$-module with Clifford $A$-connection and let $D=c\circ \nabla$ be the corresponding $A$-Dirac operator. Let $\gamma \in \sf{K}$ and $j\colon M^\gamma \hookrightarrow M$ the inclusion. Then
\begin{equation} 
\label{e:fixedptformulaintro}
\pair{\tau}{\tn{ind}(D)}(\gamma)=\bigg\la \frac{j^*\tau}{|\nu|},\bigg(\frac{\Ahat(j^!A)\tn{\textbf{ch}}^{\gamma}(E/S)}{(2\pi \i)^{n_0/2}\i^{n_1/2}\tn{det}^{1/2}(1-\gamma_1 e^{-R_1})}\bigg)_{[n]}\bigg\ra.
\end{equation}
\end{theorem}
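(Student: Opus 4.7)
The strategy is to express the left-hand side as the $t\to 0^+$ limit of a heat supertrace and to compute the limit by Getzler rescaling in the Higson-Yi tangent groupoid framework developed earlier in the paper, with the equivariant refinement of Lafferty-Yu-Zhang.

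First I would establish a Lie-groupoid McKean-Singer formula. Using a parametrix for the longitudinally elliptic operator $D$, the standard representative for $\tn{ind}(D) \in K_0^\sf{K}(\Psi^{-\infty}(G))$ recalled in Section \ref{s:equivtracepairing} can be replaced, at the level of the equivariant trace pairing, by a heat-operator representative: for every $t>0$,
\[ \pair{\tau}{\tn{ind}(D)}(\gamma) = \tau_s\bigl(\gamma\cdot e^{-tD^2}\bigr), \]
where $e^{-tD^2}$ is the source-fiberwise heat operator viewed as a $G$-convolution kernel in a completion of $\Psi^{-\infty}(G)$, and $\gamma$ acts by left multiplication by the bisection. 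Independence of $t$ follows from the trace property of $\tau$ together with the fact that $D$ commutes with the $\sf{K}$-action. The problem is then reduced to analysing the $t\to 0^+$ behaviour of the right-hand side.

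Next I would localize to the fixed-point set. Because $\tau$ factors through restriction to $M$, the pairing only sees the convolution kernel along the units; the bisection $\gamma$ then twists this restriction and concentrates the kernel, in the $t\to 0^+$ limit, onto $M^\gamma \hookrightarrow M$. Working in a $\sf{K}$-equivariant tubular neighborhood of $M^\gamma$ identified with the normal bundle $\nu$, one obtains the Jacobian factor $|\nu|$ appearing in the denominator. The heart of the argument is the adaptation of the Higson-Yi construction: over a suitable enlargement of Connes' adiabatic tangent groupoid $\bT G$, assemble the Clifford module $E$, its compatible $A$-connection $\nabla$, and the heat operator $e^{-tD^2}$ into a smooth rescaled $\bZ/2$-graded bundle and a smooth section over $\bT G \times [0,1)_t$, such that the restriction to $t>0$ recovers the usual heat kernel and the $t=0$ fiber lies in the Getzler symbol algebra. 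Equivariance is built in near $M^\gamma$ by arranging the rescaling so that $\gamma$ acts at $t=0$ as the linear isometry $\gamma_1 = \gamma|_{\nu}$ on the normal directions and trivially on $TM^\gamma$. The fiberwise supertrace at $t=0$ is then computed by an equivariant Mehler formula: the tangential directions contribute the $\Ahat(j^! A)$ factor and the twisting Chern character $\tn{\textbf{ch}}^\gamma(E/S)$, while the Mehler integral in the normal directions with twist $\gamma_1$ produces the denominator $(2\pi\i)^{n_0/2}\i^{n_1/2}\tn{det}^{1/2}(1-\gamma_1 e^{-R_1})$. Extracting the top-degree component and pairing with $j^*\tau/|\nu|$ gives exactly formula~\eqref{e:fixedptformulaintro}.

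The main obstacle I anticipate is the rigorous construction of the $\sf{K}$-equivariant rescaled bundle over $\bT G$ near $M^\gamma$ and the verification that the heat operator extends smoothly up to $t=0$ with the correct Mehler-type restriction. In particular, one must handle the subtle holonomy correction associated with the geodesic triangle $\{x,y,\gamma x\}$ near a $\gamma$-fixed point, which shows up in \cite[Lemma~3.3]{lafferty1992lefschetz}: in our setup this correction is absorbed into the identifications used to define parallel transport inside the rescaled bundle, and one must show that these identifications glue into a genuinely smooth equivariant structure on $\bT G$. A secondary technical point is that $D$ is only source-fiberwise elliptic and $G$ need not be compact, so the heat operator, its trace-class properties, and its convergence as $t\to 0^+$ must be formulated and controlled entirely through $G$-convolution kernels rather than operators on a fixed Hilbert space; once this analytic groundwork is in place, the Higson-Yi framework essentially forces the correct algebraic outcome and in particular prevents the accidental omission of the $\gamma_1$-twist in the denominator.
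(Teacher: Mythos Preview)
Your overall strategy is the paper's strategy, but there is a genuine gap where you label something a ``secondary technical point.'' You propose to work with the true source-fiberwise heat kernel $e^{-tD^2}$ and to pair it with $\tau$. The paper explicitly avoids this: for a general Lie groupoid it is an open question whether $\exp(-t\Delta^R)$ is smooth \emph{transversely} to the $s$-fibers (cf.\ the discussion at the start of Section~\ref{s:asymptoticheat}), and since $\tau$ is allowed to be a genuinely distributional trace on $M$, you cannot pair it with something that is not known to be smooth. The paper's workaround is substantial: it constructs an \emph{asymptotic heat kernel} $K_t$ by Borel-summing the formal perturbative solution, so that $K_t$ is manifestly smooth and compactly supported and satisfies the heat equation only modulo an $O(t^\infty)$ smoothing error. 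Correspondingly, one does not get an exact McKean--Singer identity valid for all $t>0$; instead one proves (Theorem~\ref{t:t0term}) the weaker statement that $\pair{\tau}{\tn{ind}(D)}(\gamma)$ equals the \emph{constant term in the asymptotic expansion} of $\tau^\gamma_s(K_t)$ as $t\to 0^+$, which suffices for the rescaling argument. Your claimed equality $\pair{\tau}{\tn{ind}(D)}(\gamma)=\tau_s(\gamma e^{-tD^2})$ for all $t>0$ is not available in this generality.

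A second, more structural difference: the rescaled bundle in the paper is built not over the tangent groupoid $\bT G=\sbN_GM$ but over the deformation to the normal cone $\sbN_GM^\gamma$ of the fixed-point set $M^\gamma$ inside $G$ (with $B=\delta A$, $W=\delta E$). This bakes the localization to $M^\gamma$ and the $\gamma$-twist directly into the deformation space, so that the holonomy correction you mention and the normal-direction Gaussian integral producing $\tn{det}^{1/2}(1-\gamma_1 e^{-R_1})$ emerge from the computation of $\dK_t$ on $\dN_GM^\gamma$ (Propositions~\ref{p:dTheta0}, \ref{p:dKt}) together with a rescaled-trace lemma (Lemma~\ref{l:n1vanishing}) that uses the $\Gamma(A)$-invariance of $\tau$ to pull back to $M^\gamma$. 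Your proposal to rescale over $\bT G$ and then localize separately would still need an argument of this type, and would in any case have to be carried out for the asymptotic heat kernel rather than the true one.
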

The characteristic forms are Lie algebroid analogues of the familiar ones, and our notation is similar to Berline-Getzler-Vergne \cite{BerlineGetzlerVergne}. The subscript `$1$' denotes quantities associated to the normal bundle $\dN_MM^\gamma$ to $M^\gamma=\{m\in M\mid \gamma m=m\}$, for example $R_1$ is its curvature and $n_1$ is its rank. The vector bundle $j^!A=A\times_{TM}TM^\gamma$ is the Lie algebroid pullback of $A$ to $M^\gamma$ (Proposition \ref{p:transversefixedpt} below shows that this is automatically a transverse pullback). The trace $\tau$ is defined in terms of a generalized section on $M$, and $j^*\tau$ denotes its pullback to $M^\gamma$, which is well-defined by wave-front set considerations.

In the classical setting $G=\tn{Pair}(M)$, $j^*\tau/|\nu|$ coincides with the functional $\Gamma(j^*\Lambda)\rightarrow \bC$ given by Berezin integration followed by integration over $M^\gamma$ with respect to the Riemannian volume density, and the formula in Theorem \ref{t:fixedptformulaintro} becomes precisely that given in \cite[Theorem 6.16]{BerlineGetzlerVergne}. Another special case is for $G$ the holonomy groupoid of a foliation of a closed manifold $M$, with trace $\tau$ given by a transverse measure. In this instance Theorem \ref{t:fixedptformula} recovers Connes' \cite{connes1979theorie} index theorem for measured foliations in the non-equivariant case and the leafwise-isometric case of an equivariant generalization due to Heitsch-Lazarov \cite{heitsch1990lefschetz}.

In Section \ref{s:examples} we describe a new example where $A={}^bTM$ is the $b$-tangent bundle associated to a hypersurface in $M$ (more generally to a simple normal crossing divisor). The corresponding $A$-Dirac operator is not elliptic in the usual sense because its symbol degenerates in a prescribed way along the hypersurface. There is a further straightforward generalization to the case of a measured foliation transverse to the divisor. We also describe the analogue of the Atiyah-Hirzebruch vanishing theorem in our context.

We remark that in the study of $b$ (and related) geometry, initiated by Melrose and further developed by many authors, the focus is usually on the case where the hypersurfaces sit at the boundary of a compact manifold with corners. We emphasize that in the examples described here, the hypersurfaces are always embedded in a \emph{closed} manifold. The latter situation has been studied in Poisson geometry (cf. \cite{guillemin2014symplectic, gualtieri2017tropical, LLSS2}), where the b-tangent bundle is used to study Poisson structures that are symplectic everywhere except for controlled degeneracies along the hypersurfaces of the divisor.

Throughout we work with smooth objects and in the smooth category. To our knowledge it is not known in general whether the longitudinal heat kernel $\exp(-t\Delta^R)$, $\Delta^R=(D^R)^2$ is smooth transversely to the source fibers (cf. \cite[Conjecture 1.6]{bkso2014exponential}, \cite[Remark 4.8]{bohlen2018getzler}). To avoid this obstacle we instead work with an object which is essentially a parametrix for the heat operator $(\partial_t+\Delta)$. We give a relatively `low tech' construction of this operator that avoids the heat operator pseudodifferential calculus (almost); in brief we apply Borel summation to the formal perturbative solution of the heat equation (described in the textbooks \cite{roe1988index} or \cite{BerlineGetzlerVergne} for example) to obtain a convergent series which is a solution of the heat equation modulo an error which is smoothing and vanishes to infinite order at $t=0$. 

Part of the inspiration for this work came from a series of articles by Pflaum, Posthuma and Tang \cite{pflaum2014index, pflaum2015localized, pflaum2015transverse}. In these articles the authors studied longitudinal elliptic (not necessarily Dirac) operators on Lie groupoids $G$ (and more generally manifolds equipped with a proper co-compact $G$-action), and obtained formulas not just for traces but for the evaluation of a wider collection of cyclic cocycles on the index class. The methods are based on algebraic index theory and Fedosov-type deformation quantization, and are completely different from those used here. Our focus is more restricted. Nevertheless our Theorem \ref{t:fixedptformulaintro} generalizes a special case of the Pflaum-Posthuma-Tang theorem in two ways (i) we treat the equivariant case with an auxiliary action of a compact Lie group $\sf{K}$; (ii) we allow traces that are given by distributions on the unit space (this will be crucial for the example described in Section \ref{s:examples}).

The contents of the article are as follows. In Section 2 we summarize needed background on Lie groupoids and Lie algebroids, including Lie algebroid analogues of objects such as the Levi-Civita connection, Chern-Weil forms, and Dirac operators. We develop the basic properties of compact Lie group actions on Lie groupoids by bisections. In Section 3 we briefly recall the convolution algebra $\Psi^{-\infty}(G)$ of a Lie groupoid and the equivariant trace pairing. In Section 4 we construct a heat parametrix (`asymptotic heat kernel') $K_t$ using Borel summation of the formal solution as alluded to above. We construct a parametrix for $D$ using $K_t$ and deduce that the equivariant trace pairing $\pair{\tau}{\tn{ind}(D)}(\gamma)$ equals the constant term in the asymptotic expansion of $\tau^\gamma_s(K_t)$ as $t\rightarrow 0^+$. In Section 5 we develop the appropriate version of the Higson-Yi vector bundle incorporating the Getzler rescaling, which in this case lives over the deformation to the normal cone $\sbN_GM^\gamma$ for $M^\gamma$ inside of $G$. Also included is a rapid treatment of Schwartz functions on the deformation to the normal cone. Section 6 contains the calculation of the fixed-point contributions and the proof of Theorem \ref{t:fixedptformulaintro}. In a brief Section 7, we explain two modest extensions of the main theorem: to the case of non-Hausdorff Lie groupoids, and to proper actions of possibly non-compact Lie groups. Finally in Section 8 we describe examples (in particular $A={}^bTM$), and give the analogue of the Atiyah-Hirzebruch vanishing theorem.

\bigskip

\noindent \textbf{Notation}. Throughout $G$ denotes a Lie groupoid over a closed manifold $M$ with source $s$ and range $r$. The Lie algebroid of $G$ is $A=\ker(Ts)$ and $h$ is a fiber metric on $A$. $\sf{K}$ is a compact Lie group that acts on $G$ by bisections, and $\gamma$ denotes either an element of $\sf{K}$ or the corresponding bisection of $G$. The letter $\tau$ is used both to denote a trace on $\Psi^{-\infty}(G)$ and for the corresponding generalized section over $M$. The operator $D$ is an $A$-Dirac operator and $\Delta=D^2$. The `asymptotic heat kernel' or heat parametrix is denoted $K_t$ (defined in Section 4). Objects associated to the deformation to the normal cone $\sbN_GM^\gamma$ are denoted with black-board bold. The corresponding objects associated to the normal bundle $\dN_GM^\gamma$ are denoted with ordinary bold.

\bigskip

\noindent \textbf{Acknowledgments}. J. Sanchez is supported by NSF grants DMS-1952551, DMS-1952557.

\section{Geometric structures on Lie groupoids}\label{s:liegpd}
In this section we introduce basic definitions and notation for geometric structures on Lie groupoids and Lie algebroids that we shall need in the rest of the article. In particular we introduce $A$-Dirac operators for any metrised Lie algebroid $(A,h)$. We also develop the basic properties of compact Lie group actions on Lie groupoids by bisections.

\subsection{Background on Lie groupoids}
Let $(r,s)\colon G \rightrightarrows M$ be a (Hausdorff) Lie groupoid with compact unit space $M \subset G$ and inverse $\iota \colon G \rightarrow G$. Let $G^{(2)}\subset G^2$ denote the subset of composable pairs of arrows, i.e. pairs $(g_1,g_0)$ with $s(g_1)=r(g_0)$. There are three face maps 
\[ (\partial_1,\partial,\partial_0)\colon G^{(2)} \rightarrow G^3 \]
where $\partial_1$, $\partial_0$ are projection to the first and second factors respectively, and $\partial$ is the groupoid multiplication. The inverse and groupoid multiplication will also be written $\iota(g)=g^{-1}$, $\partial(g_1,g_0)=g_1g_0$.

Let $\rho \colon A=\ker(Ts)|_M \rightarrow M$ denote the Lie algebroid of $G$. There are canonical isomorphisms 
\[ \ker(Ts)|_M\simeq \dN_GM\simeq \ker(Tr)|_M,\] 
where $\dN_GM$ denotes the normal bundle to $M$ in $G$. Moreover
\[ s^*A\simeq \ker(Tr) \qquad r^*A\simeq \ker(Ts), \]
by left (resp. right) translation to the unit space; we shall use these isomorphisms frequently without comment. A section $X \in \Gamma(A)$ determines a corresponding left (resp. right) invariant vector field on $G$ denoted $X^L=s^*X$ (resp. $X^R=r^*X$). In terms of the action of the groupoid on itself by right (resp. left) multiplication,
\[  X^L(g)=-\frac{\d}{\d u}\bigg|_{u=0} g\ell(u)^{-1}, \qquad X^R(g)=\frac{\d}{\d u}\bigg|_{u=0} \ell(u)g, \]
where $\ell$ is any smooth curve in $s^{-1}(r(g))$ with $\ell(0)=r(g)$, $\ell'(0)=X(r(g))$.

The action of right and left invariant vector fields on $G$ are conveniently described in terms of a Lie algebroid
\[ \delta A=r^*A\oplus s^*A \rightarrow G, \]
generated by sections $X^R\oplus Y^L$, $X,Y \in \Gamma(A)$. Such sections will be used frequently, and we will often use the simplified notation
\[ X\oplus Y:=X^R\oplus Y^L \in \Gamma(\delta A).\]
The anchor map $\delta \rho \colon \delta A\rightarrow TG$ of $\delta A$ is given on generators by
\[ \delta \rho(X\oplus Y)=X^R-Y^L.\]
Note that $\delta \rho(X\oplus X)=X^R-X^L$ is generated by the adjoint action of $G$ on itself, and in particular is tangent to the unit space $M$.

The Lie algebroid $A$ has a universal enveloping algebra, the algebra generated by $C^\infty(M)$, $\Gamma(A)$ with relations $f\cdot X=fX$, $X\cdot f=fX+\rho(X)f$, $X\cdot Y-Y\cdot X=[X,Y]$. When $A=TM$ this recovers the algebra of scalar differential operators on $M$. The map $X\mapsto X^R$ between sections of $\Gamma(A)$ and right invariant vector fields along the $s$-fibers extends to an isomorphism $T \mapsto T^R$ from the universal enveloping algebra of $A$ to the algebra of $G$-equivariant families of scalar differential operators along the $s$-fibers.

\subsection{Splitting theorem for Lie algebroids}\label{s:splitting}
For further background, see for example \cite{meinrenkenliegroupoids} and references therein. Let $f\colon Q\rightarrow M$ be a smooth map such that the anchor map $\rho \colon A \rightarrow TM$ is transverse to the tangent map $Tf\colon TQ \rightarrow TM$. Then the fiber product
\[ f^!A=A\times_{TM}TQ \]
is a smooth Lie algebroid over $Q$ known as the \emph{pullback Lie algebroid}. As a special case when $j\colon Q \hookrightarrow M$ is an embedded submanifold such that $\rho$ is transverse to $TQ$, then the pullback Lie algebroid $j^!A$ is defined.

An \emph{Euler-like vector field} for a submanifold $j\colon Q\hookrightarrow M$ is a smooth vector field $\R$ defined on a neighborhood of $Q$, vanishing along $Q$, and such that its linearization $\dN(\R)\in \mf{X}(\dN_MQ)$ is the Euler vector field on the normal bundle. An \emph{Euler-like section of} $A$ for $Q$ is a smooth section $\E$ of $A$ defined on a neighborhood of $Q$, vanishing along $Q$, and such that $\rho(\E)$ is an Euler-like vector field for $Q$.

When $\rho$ is transverse to $TQ$, Euler-like sections exist. Moreover a choice of Euler-like section $\E$ determines an isomorphism of Lie algebroids
\[ A|_N \simeq p^!j^!A \]
where $N\supset Q$ is the tubular neighborhood determined by the Euler-like vector field $\rho(\E)$ and $p\colon N \rightarrow Q$ is the projection map of the tubular neighborhood embedding. This statement is known as the splitting theorem for Lie algebroids (cf. \cite[Theorem 4.1]{bursztyn2019splitting}). If the normal bundle is trivial so that $N\simeq V\times Q$ then the statement simplifies: $A|_N\simeq TV\times j^!A$. Of course since the normal bundle is locally trivial, one always has this description locally.

\subsection{Vector bundles, connections, curvature}
Let $E\rightarrow M$ be a vector bundle. An $A$-connection $\nabla$ on $E$ is a differential operator
\[ \nabla \colon \Gamma(E)\rightarrow \Gamma(A^*\otimes E) \]
satisfying the Leibniz rule
\[ \nabla_X (fs)=(\rho(X)f)s+f\nabla_X s, \qquad f \in C^\infty(M), X \in \Gamma(A), s\in \Gamma(E).\]
An $A$-connection on $E$ induces an $A$-connection on the dual $E^*$.
The curvature $F \in \Gamma(\wedge^2 A^*\otimes \End(E))$ of an $A$-connection is defined by the usual formula 
\[F(X,Y)=[\nabla_X,\nabla_Y]-\nabla_{[X,Y]},\qquad X,Y\in \Gamma(A).\]

An differential $A$-form is a smooth section of $\wedge A^*$. There is a de Rham $A$-differential $\d_A$ making $\Gamma(\wedge A^*)$ into a complex (graded by degree), and whose cohomology is the Lie algebroid cohomology of $A$. Given an invariant polynomial $p$ on $\End(\bC^r)$, $r=\tn{rank}(E)$, the Chern-Weil construction produces a $\d_A$-closed differential $A$-form 
\[ p(E)=p(E,\nabla):=p(F) \in \Gamma(\wedge A^*), \] 
whose $\d_A$-cohomology class is independent of the choice of connection. We will sometimes omit the connection $\nabla$ to simplify the notation. For example if $\gamma$ is an endomorphism of $E$ that preserves $\nabla$, then one has the $\gamma$-twisted $A$-Chern character 
\[ \tn{\textbf{ch}}^\gamma(E)=\tn{\textbf{ch}}^\gamma(E,\nabla)=\tr(\gamma\exp(-F)) \in \Gamma(\wedge A^*).\]

The pullback $r^*E$ is a $G$-equivariant vector bundle over $G$, where $G$ acts on itself by right multiplication. Under the isomorphism $r^*A\simeq \ker(Ts)$, an $A$-connection on $E$ pulls back to a $G$-equivariant family of ordinary connections on the restriction of $r^*E$ to each $s$ fiber of $G$, or in other words, to a $G$-equivariant partial connection on $r^*E$, defined on the vertical subbundle $\ker(Ts)\subset TG$ for the submersion $s$. Under this correspondence, $r^*F$ is the curvature of the partial connection. 

There is a straight-forward generalization of the universal enveloping algebra of $A$ incorporating $\End(E)$-coefficients. As before an element $T$ of this algebra gives rise to a smooth $G$-equivariant family $T^R$ of source-wise differential operators acting on $r^*E$, and this correspondence is one-one.

Given $E$ as above, let
\[ \delta E=r^*E\otimes s^*E^* \rightarrow G.\]
Note that $\delta E$ is an example of a multiplicative vector bundle over $G$, in the sense that there is a natural smooth bundle morphism $\partial_1^*\delta E\otimes \partial_0^*\delta E\rightarrow \delta E$ covering the groupoid multiplication $\partial \colon G^{(2)}\rightarrow G$. Define a $\delta A$-connection on $\delta E$ by
\begin{equation} 
\label{e:deltaAconn}
\nabla_{X\oplus Y}(r^*\sigma_1\otimes s^*\sigma_2)=r^*\nabla_X\sigma_1\otimes s^*\sigma_2+r^*\sigma_1\otimes s^*\nabla_Y\sigma_2,
\end{equation}
on generators, and then extending to arbitrary sections by $C^\infty(G)$-linearity in the $\delta A$-slot and the Leibniz rule in $\delta E$.

\subsection{Metrics}
Let $h$ be a fiber metric on $A$. Under the isomorphism $r^*A=\ker(Ts)$, the pullback $r^*h$ defines a family of Riemannian metrics on the source fibers of $G$. The metric determines an $A$-connection $\nabla$ on $A$ itself, given by the Koszul formula \cite[Chapter 1]{BerlineGetzlerVergne}. We refer to this connection as the Levi-Civita $A$-connection. The induced $G$-equivariant family of connections on the source fibers are the usual Levi-Civita connections for the $G$-equivariant family $r^*h$ of Riemannian metrics on the $s$ fibers. The curvature of $\nabla$ is denoted
\[ R \in \Gamma(\wedge^2 A^*\otimes \mf{so}(A)).\]
Its pullback $r^*R$ identifies with the family of Riemann curvature tensors of the Levi-Civita connections along the $s$ fibers. In particular $R$ has the familiar symmetries of the Riemann curvature tensor, including the well-known identity
\begin{equation}
\label{e:swapsymmetry}
h(W,R(X,Y)Z)=h(X,R(W,Z)Y).
\end{equation}
Later on it is convenient to use notation similar to \cite{BerlineGetzlerVergne} for contractions of the $\mf{so}(A)$ indices of the tensor $R$ using the metric $h$. For $X,Y \in A$ we define
\[ R|Y\ra=h(R(-,-)Y,-) \in \Gamma(\wedge^2A^*\otimes A^*), \quad \la X|R|Y\ra=h(R(-,-)Y,X) \in \Gamma(\wedge^2A^*).\]
For example with this notation \eqref{e:swapsymmetry} becomes
\begin{equation} 
\label{e:swapsymmetry2}
h(-,R(X,Y)-)=\la X|R|Y\ra \in \Gamma(\wedge^2A^*).
\end{equation}

An invariant analytic germ $f$ at the origin in $\mf{so}(n)$ determines, via the Chern-Weil construction, an $A$-differential form
\[ f(R) \in \Gamma(\wedge A^*).\]
For example the $\Ahat$ form $\Ahat(A)$ for $(A,h)$ is associated to the function $\tn{det}^{1/2}(\frac{x/2}{\sinh(x/2)})$.

The metric also determines an exponential map
\[ \exp^h\colon U \rightarrow G \]
where $U\subset A$ is a suitable tubular neighborhood of the $0$-section of some radius $\kappa>0$. We mention the following property of the $s$-fibers for completeness, although we shall not need it.
\begin{proposition}
\label{p:bdedgeom}
The induced metrics on the $s$ fibers of $G$ have bounded geometry.
\end{proposition}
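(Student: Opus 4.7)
The plan is to reduce all bounded-geometry estimates on the source fibers to uniform estimates over the compact manifold $M$, by exploiting the right $G$-invariance of the Riemannian structure. First I would establish that for any $g\in G$ with $s(g)=m_0$, $r(g)=m_1$, right multiplication $R_g \colon s^{-1}(m_1)\to s^{-1}(m_0)$ is an isometric diffeomorphism intertwining the Levi-Civita connections of the two source fibers. This is built into the construction: the identification $r^*A\simeq \ker(Ts)$ is made by right translation, and since $r(hg)=r(h)$, the differential of $R_g$ acts as the identity on the $A$-fiber and hence preserves the metric $r^*h$; similarly, because the Koszul formula for the Levi-Civita $A$-connection depends only on $h$ and the bracket of $A$ (both pulled back equivariantly via $X\mapsto X^R$), the induced family of Levi-Civita connections on source fibers is right-invariant as well.

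Given this right-invariance, every local Riemannian invariant at a point $g\in s^{-1}(m_0)$ corresponds, via $R_{g^{-1}}$, to the same invariant at the unit $r(g)$. For curvature, the Riemann tensor of $s^{-1}(m_0)$ together with its iterated Levi-Civita covariant derivatives pull back under right translation to $r^*R$ and $r^*(\nabla^k R)$, where $R$ is the curvature of the Levi-Civita $A$-connection. Since $M$ is compact and each $\nabla^k R$ is a smooth section over $M$, these are uniformly bounded, giving uniform bounds on the Riemann tensor and all its covariant derivatives on every source fiber.

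For the injectivity radius I would use the Lie algebroid exponential map $\exp^h\colon U\to G$, whose domain $U$ can be taken of uniform radius $\kappa>0$ by compactness of $M$. Restricted to $A_m\subset \ker(Ts)|_m$, this is the Riemannian exponential of $s^{-1}(m)$ at the unit $m$, giving injectivity radius at least $\kappa$ at each unit; propagating by the isometries $R_g$ yields a uniform lower bound at every point of every source fiber. The main (mild) obstacle is verifying the right-equivariance of the Levi-Civita $A$-connection, which unwinds directly from the Koszul formula together with the correspondence $X\mapsto X^R$ between sections of $A$ and right-invariant vector fields on $G$.
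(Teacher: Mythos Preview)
Your proposal is correct and follows essentially the same approach as the paper: the curvature tensor and its covariant derivatives are pulled back under $r$ from smooth sections over the compact base $M$, hence uniformly bounded, and right translation of the geodesic balls $\exp^h(U)\cap s^{-1}(x)$ gives the uniform lower bound $\kappa$ on the injectivity radius. The paper's proof is just a terser version of what you wrote, omitting the explicit verification that right multiplication is an isometry intertwining the Levi-Civita connections (which it takes as implicit in the identification $r^*A\simeq\ker(Ts)$).
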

\begin{proof}
The boundedness of the curvature tensor and its covariant derivatives is clear because these tensors are pulled back under $r$ from sections of tensor powers of $A,A^*$ defined over the compact manifold $M$. Right translation of the geodesic balls $\exp^h(U)\cap s^{-1}(x)$, $x \in M$ shows that the injectivity radius is bounded below by $\kappa$.
\end{proof}

\subsection{Clifford modules and $A$-Dirac operators}
Let $(E=E^+\oplus E^-,c\colon \bCl(A^*)\rightarrow \End(E))$ be a $\bZ_2$-graded $\bCl(A^*)$-module bundle. Let $\nabla$ be a Clifford $A$-connection on $E$, i.e. an $A$-connection $\nabla$ on $E$ preserving the $\bZ_2$-grading and satisfying
\[ [\nabla_X, c(\alpha)]=c(\nabla_X \alpha), \qquad \forall X \in \Gamma(A), \alpha \in \Gamma(A^*) \]
where on the right $\nabla_X\alpha$ denotes the Levi-Civita $A$-connection (we use the same symbol $\nabla$ for both the Clifford $A$-connection on $E$ and the Levi-Civita $A$-connection on $A$). 

The usual construction of Clifford connections for $A=TM$ generalizes immediately to this setting. Indeed, locally on $M$, $E$ can be decomposed into a graded tensor product $S\hat{\otimes} W$, where $S$ is an irreducible $\bCl(A^*)$-module associated to a $\Spin(n)$ reduction of structure group of the oriented orthonormal frame bundle of $A$. The Levi-Civita $A$-connection induces a canonical spin $A$-connection on $S$. A local Clifford $A$-connection on $E$ is obtained by tensoring the spin $A$-connection on $S$ with an arbitrary $\bZ_2$-grading compatible connection on $W$. One obtains a global Clifford $A$-connection by patching together local Clifford $A$-connections using a partition of unity on $M$.

The endomorphism bundle
\[ \End(E)\simeq \bCl(A^*)\otimes \End_{\Cl}(E) \]
where $\End_{\Cl}(E)$ denotes endomorphisms commuting with the Clifford action. With respect to this tensor product decomposition, the curvature $F$ of $(E,\nabla)$ is a sum
\begin{equation} 
\label{e:barF}
F=R\otimes 1+1\otimes F^{E/S}
\end{equation}
where we identify $R$ with a section of $\wedge^2A^*\otimes \Cl^{[2]}(A^*)$ using the standard isomorphism $\mf{so}(A)\simeq \Cl^{[2]}(A^*)$.

The data $(E,c,\nabla)$ is used to construct an $A$-Dirac operator $D \colon \Gamma(E)\rightarrow \Gamma(E)$ given by the usual composition
\[ \Gamma(E)\xrightarrow{\nabla}\Gamma(A^*\otimes E)\xrightarrow{c} \Gamma(E).\]
This is a first-order differential operator. Its $A$-symbol $\sigma^A_D(\xi)=c(\xi)$, $\xi \in A^*$, is invertible away from the zero section in $A^*$. On the other hand the ordinary symbol of $D$ is
\[ \sigma_D(\d f)=[D,f]=c(\rho^\top(\d f)) \]
where $\rho^\top \colon T^*M\rightarrow A^*$ is the adjoint morphism. Thus $D$ is elliptic in the ordinary sense if and only if $\rho^\top$ is injective if and only if $\rho$ is surjective.

Using the identification $r^*A\simeq \ker(Ts)$, the pullback $r^*(E,c,\nabla)$ is a $G$-equivariant family of Clifford modules with connection for the $s$-fibers of $G$ equipped with the pullback metric $r^*h$. This is the Clifford data for the $G$-equivariant family of (ordinary) Dirac operators $D^R$ along the $s$-fibers.

The square $D^2=\Delta$ is given by the Lichnerowicz formula:
\begin{equation}
\label{e:Lichnerowicz}
\Delta=D^2=-\Tr_h(\nabla^2)+\frac{\kappa(R)}{4}+c(F^{E/S}).
\end{equation}
where $\kappa(R) \in C^\infty(M)$ is the scalar constructed from the tensor $R$ in the same way as the scalar curvature in the case $A=TM$, $F^{E/S}\in \Gamma(\wedge^2A^*_\bC\otimes \End_{\Cl}(E))$ is viewed as a section of $\bCl^{[2]}(A^*)\otimes \End_{\Cl}(E)$ via the isomorphism $\wedge A^*\simeq \Cl(A^*)$, and $\Tr_h(\nabla^2)$ denotes the contraction of the operator 
\[ \Gamma(E)\xrightarrow{\nabla} \Gamma(A^*\otimes E)\xrightarrow{\nabla\otimes 1+1\otimes \nabla} \Gamma(A^*\otimes A^*\otimes E) \]
with the inverse of the metric $h^{-1} \in \Gamma(\tn{Sym}^2(A))$. By the same calculation as in the case $A=TM$ (cf. \cite[Chapter 3]{BerlineGetzlerVergne}), one has the following formula for the operator $-\Tr_h(\nabla^2)$ in terms of a local orthonormal frame $e_1,...,e_n$ of $(A,h)$:
\begin{equation}
\label{e:Bochner}
-\Tr_h(\nabla^2)=-\sum_{i=1}^n (\nabla_{e_i}^2-\nabla_{\nabla_{e_i} e_i}).
\end{equation}
Equation \eqref{e:Lichnerowicz} may be shown by repeating the proof in the case $A=TM$ (cf. \cite[Chapter 3]{BerlineGetzlerVergne}); alternatively, working with the $G$-equivariant family $D^R$, \eqref{e:Lichnerowicz} is an immediate consequence of its classical counterpart.

\subsection{Group actions}\label{s:groupactions}
A \emph{bisection} of $G$ is an embedded submanifold $\gamma$ of $G$ that is a section of both $r,s$; equivalently $r_\gamma:=r|_{\gamma}$ and $s_\gamma:=s|_{\gamma}$ are diffeomorphisms from $\gamma$ to $M$. The space of bisections of $G$ forms a group denoted $\tn{Bis}(G)$, where the group multiplication is induced by multiplication of arrows in $G$. $\tn{Bis}(G)$ is typically infinite dimensional, for example, if $G=\tn{Pair}(M)=M\times M$, $\tn{Bis}(G)\simeq\tn{Diff}(M)$ is isomorphic to the diffeomorphism group of $M$, via the map taking a diffeomorphism to its graph in $M\times M$. For any $G$, there is a group homomorphism
\[ \tn{Bis}(G)\rightarrow \tn{Diff}(M), \qquad \gamma \mapsto \tilde{\gamma}:=r_\gamma\circ s_\gamma^{-1}.\]

Bisections act on $G$ by left and right multiplication: given $g \in G$, let $\gamma g$ denote the composition of $g$ with the unique arrow contained in $\gamma$ having source equal to $r(g)$, and similarly let $g\gamma$ denote the composition of $g$ with the unique arrow in $\gamma$ having range equal to $s(g)$. The conjugation action $g\mapsto \Ad_\gamma(g)=\gamma g \gamma^{-1}$ is a Lie groupoid automorphism, and restricts to the diffeomorphism $\tilde{\gamma}$ on the unit space $M$. In the example $G=\tn{Pair}(M)=M\times M$, the two actions are those of $\tn{Diff}(M)$ on the first and second factors respectively, and conjugation is the diagonal action. The action of $\tn{Bis}(G)$ on $G$ by left multiplication commutes with the right action of $G$ on itself, and thus any $\gamma \in \tn{Bis}(G)$ determines, via left multiplication, a $G$-equivariant family of diffeomorphisms of the $s$-fibers. Right multiplication by $\gamma$ does not preserve $s$-fibers, but maps $s$-fibers diffeomorphically to other $s$-fibers. For a function $f \in C^\infty(G)$ and $\gamma \in \tn{Bis}(G)$ let
\[ \gamma f=f^\gamma \in C^\infty(G), \qquad f^\gamma(g)=f(\gamma^{-1}g) \]
be the pullback action for left multiplication. There is a similar pullback (left) action for right multiplication denoted $f\mapsto f\gamma$ (we shall use this less).

More generally let $E \rightarrow M$ be a vector bundle. Let $\tn{Bis}(G,E)$ be the set of pairs $(\gamma,\gamma^E)$ consisting of a bisection $\gamma\subset G$ together with an isomorphism of vector bundles
\[ \gamma^E \colon s_\gamma^*E\xrightarrow{\sim} r_\gamma^*E.\]
Composition of morphisms makes $\tn{Bis}(G,E)$ into a group. The additional data of an isomorphism $\gamma^E \in \Gamma(\Hom(s_\gamma^*E,r_\gamma^*E))=\Gamma(\delta E|_\gamma)$ over $\gamma$ is equivalent to specifying an automorphism, also denoted $\gamma^E$, of $E$ covering the diffeomorphism $\tilde{\gamma}$. There is a left action of $\tn{Bis}(G,E)$ on $r^*E$ compatible with the natural forgetful map $\tn{Bis}(G,E)\rightarrow \tn{Bis}(G)$ and the action of $\tn{Bis}(G)$ on $G$ by left multiplication, and commuting with the $G$-action on $r^*E$ covering right multiplication. By taking adjoints there is a similar right action of $\tn{Bis}(G,E)$ on $s^*E^*$. The bundle $\delta E=r^*E\otimes s^*E^*$ thus carries actions of $\tn{Bis}(G,E)$ on both the left and the right. For a section $f \in \Gamma(\delta E)$ and $(\gamma,\gamma^E)\in \tn{Bis}(G,E)$ let
\[ \gamma f=f^\gamma \in \Gamma(\delta E), \qquad f^\gamma(g)=\gamma^Ef(\gamma^{-1}g), \]
be the pullback action for left multiplication. There is a similar pullback action for right multiplication denoted $f\mapsto f\gamma$ (we shall use this less).

As a special case, any $\gamma \in \tn{Bis}(G)$ prolongs to an element $(\gamma,\gamma^A)\in \tn{Bis}(G,A)$, where the corresponding automorphism of $A$ covering $\tilde{\gamma}$ is obtained by applying the Lie functor to $\Ad_\gamma$. From the definitions one has that if $f \in C^\infty(G)$, $X,Y\in \Gamma(A)$, then $(X^Rf)^\gamma=(\gamma^A X)^R f^\gamma$, $(Y^Lf)^\gamma=Y^Lf^\gamma$, or more succinctly
\begin{equation} 
\label{e:vectgamma}
\big(\delta \rho(X\oplus Y) f\big)^\gamma=\delta \rho(\gamma^AX\oplus Y) f^\gamma
\end{equation}
where recall $\delta \rho(X\oplus Y)=X^R-Y^L$ is the anchor map for $\delta A$.

An element $(\gamma,\gamma^E)\in \tn{Bis}(G,E)$ is said to preserve a geometric structure on $E$ (where geometric structure could be a metric, orientation, $A$-connection, Clifford module structure, etc.) if the corresponding automorphism of $E$ covering $\tilde{\gamma}$ preserves the geometric structure (for $A$-connections and Clifford module structures the prolongation $(\gamma,\gamma^A)\in \tn{Bis}(G,A)$ is involved as well). Assuming $(\gamma,\gamma^E)$ preserves an $A$-connection $\nabla$ on $E$, equation \eqref{e:vectgamma} generalizes:
\begin{equation}
\label{e:connvectgamma}
\big(\nabla_{X\oplus Y} f\big)^\gamma=\nabla_{\gamma^AX\oplus Y} f^\gamma,
\end{equation}
where $\nabla_{X\oplus Y}$ is the induced $\delta A$-connection \eqref{e:deltaAconn} on $\delta E$. This can be checked on generators of $\delta E$ using \eqref{e:deltaAconn}. Alternatively the connection $\nabla$ determines a lift of $X^R$ (resp. $Y^L$) to an infinitesimal automorphism $X_E^R$ (resp. $Y_E^L$) of $\delta E$ that exponentiates to a 1-parameter subgroup of automorphisms given by left (resp. right) multiplication by a 1-parameter subgroup in $\tn{Bis}(G,E)$. Then \eqref{e:connvectgamma} is obtained by differentiating $\gamma\exp(tX_E^R)f\exp(-tY_E^L)=\exp(t\Ad_{\gamma}X_E^R)\hat{\gamma}f\exp(-tY_E^L)$ and using $\Ad_{\gamma}X_E^R=(\gamma^A X)^R_E$.

Let $\sf{K}$ be a Lie group. An \emph{action of $\sf{K}$ on $G$ by bisections} is a group homomorphism
\begin{equation} 
\label{e:grphom}
\sf{K} \rightarrow \tn{Bis}(G), 
\end{equation}
such that the map
\[ \sf{K} \times M \rightarrow G, \qquad (\gamma,m)\mapsto \gamma m \]
is smooth, where $\gamma m$ is short-hand for the action of the bisection corresponding to $\gamma$ under \eqref{e:grphom} on $m \in M\subset G$ by left multiplication. An action of $\sf{K}$ on $G$ by bisections determines, via the action of a bisection on $G$ by left multiplication, a left action of $\sf{K}$ on the manifold $G$,
\[ \sf{K}\times G \rightarrow G, \qquad (\gamma,g)\mapsto \gamma g. \]
An action of $\sf{K}$ on $(G,E)$ by bisections is defined similarly. A metric/orientation/$A$-connection/Clifford module structure on $E$ is $\sf{K}$-equivariant if each $(\gamma,\gamma^E)$ preserves the metric/orientation/$A$-connection/Clifford module structure. An action of $\sf{K}$ on $G$ by bisections induces a smooth family of $\sf{K}$-actions on the $s$ fibers of $G$, equivariant for the action of $G$ on itself by right multiplication. $\sf{K}$-equivariance of geometric structures in the sense above translates into the usual notion of $\sf{K}$-equivariance along each of the $s$ fibers. 

For $\gamma \in \sf{K}$, let
\begin{equation}
\label{e:Mgamma}
M^\gamma=\{m \in M\mid \gamma m=m\}=\{m \in M\mid m\gamma=m\}=\gamma\cap M, \qquad j \colon M^\gamma \hookrightarrow M.
\end{equation}
Viewing $G$ as a $\sf{K}$-space where $\gamma \in \sf{K}$ acts as left multiplication by the corresponding bisection, we also have
\begin{equation} 
\label{e:Mgamma2}
M^\gamma=G^\gamma \cap M 
\end{equation}
where $G^\gamma$ denotes the $\gamma$-fixed point locus. On the other hand one has the diffeomorphism $\tilde{\gamma}=\Ad_\gamma|_M$ of $M$; its fixed-point set $M^{\tilde{\gamma}}$ is the set of $m \in M$ such that $\gamma m$ belongs to the isotropy group $G_m$ of $m$. Since $M^\gamma$ is the set of $m \in M$ such that $\gamma m=m$ is the identity element of that isotropy group, $M^\gamma \subset M^{\tilde{\gamma}}$. 
\begin{example}
In case $G=\tn{Pair}(M)$ where $M$ carries an $H$ action, $M^\gamma=M^{\tilde{\gamma}}\subset M \subset M\times M$ is the fixed-point subset for the action of $\gamma$ on $M$, embedded in the diagonal of $M\times M$.
\end{example}
\noindent In general $M^\gamma$ can be a proper subset of $M^{\tilde{\gamma}}$; we give a number of examples of this below.
\begin{example}
Suppose $G=M\times \sf{K}$ is the action Lie groupoid for a smooth action of $\sf{K}$ on $M$, and $\sf{K} \rightarrow \tn{Bis}(G)$ sends $\gamma$ to the bisection $M\times \{\gamma\}$. Then for $\gamma \ne 1$, $M^\gamma=\emptyset$ while $M^{\tilde{\gamma}}$ is the fixed point set for the action of $\gamma$ on $M$. 
\end{example}
\begin{example}
Consider the mapping torus of the disk $\bD^2$ and the map $\gamma$ given by rotation by $\pi/3$. The mapping torus carries a natural foliation $\F$ such that the intersections of the leaves and the transversal $\bD^2$ are the orbits of the $\sf{K}=\bZ_3$ action. Let $G$ be the holonomy groupoid of $\F$. Then $M^{\tilde{\gamma}}$ is a single leaf, while $M^\gamma$ is empty since the latter leaf has non-trivial holonomy.
\end{example}
\begin{example}
Consider the unit circle $M=S^1 \subset \bR^2$ and the hypersurface $Z=\{(1,0)\}$. The b-tangent bundle ${}^bTM$ admits an integration such that the isotropy group of $(1,0)$ is isomorphic to $\bR^\times$. Consider the $\sf{K}=\bZ_2$-action given by reflection across the $x$-axis. Then $M^{\tilde{\gamma}}=\{(1,0),(-1,0)\}$, while $M^\gamma=\{(-1,0)\}$ since the corresponding bisection equals $-1 \in \bR^\times$ in the source fiber over $(1,0)$.
\end{example}
\begin{example}
Consider $M=\bR P^2$ and the $\bR P^1$ at infinity $Z \subset \bR P^2$. The b-tangent bundle ${}^bTM$ admits an integration such that the isotropy group over points in $Z$ is $\bR^\times$. View $\bR P^2$ as $\bR^2 \sqcup Z$ and consider the diffeomorphism $\gamma$ given by rotation by $\pi$ about the origin. Similar to the previous example $M^{\tilde{\gamma}}=Z\cup \{(0,0)\}$ while $M^\gamma=\{(0,0)\}$ since the corresponding bisection is not the identity along $Z$. In this case $\gamma$ occurs in the smooth family of bisections given by rotation by $\theta \in [0,\pi]$.
\end{example}

\begin{proposition}
\label{p:fixedpoint}
Let $\sf{K}$ be a compact Lie group acting on $G$ by bisections and let $\gamma \in \sf{K}$. Then $G^\gamma=r^{-1}(M^\gamma)$ and $M^\gamma$ is a smooth union of connected components of the fixed-point set $M^{\tilde{\gamma}}$ of the diffeomorphism $\tilde{\gamma}$. The induced map $\dN r \colon \dN_GG^\gamma|_{M^\gamma} \rightarrow \dN_MM^\gamma$ is an isomorphism of vector bundles, intertwining the linearized actions of $\gamma$ and $\tilde{\gamma}$ on the normal bundles. These linearized actions have no non-zero fixed vectors.
\end{proposition}
\begin{proof}
Being fixed-point loci of compact Lie group actions, $G^\gamma$ and $M^{\tilde{\gamma}}$ are smooth. Note that
\[ \gamma g=g \Leftrightarrow \gamma r(g) g=r(g)g \Leftrightarrow \gamma r(g)=r(g) \]
since $r(g)$ is a unit for the groupoid multiplication. Thus $g \in G^\gamma \Leftrightarrow r(g) \in M^\gamma$, or in other words $G^\gamma=r^{-1}(M^\gamma)$ is a union of $r$-fibers. As $r$ fibers are transverse to $M$, the intersection $M^\gamma=G^\gamma\cap M$ is transverse and hence smooth.

Transversality, proved above, implies that the normal bundle functor $\dN$ applied to $r$ induces an isomorphism of vector bundles
\begin{equation} 
\label{e:Nriso}
\dN r \colon \dN_GG^\gamma|_{M^\gamma} \rightarrow \dN_MM^\gamma.
\end{equation}
Since $\sf{K}$ is compact, the linearized action
\[ \dN \gamma \colon \dN_GG^\gamma \rightarrow \dN_GG^\gamma \]
has no non-zero fixed vectors, and hence by \eqref{e:Nriso}, neither does
\[ \dN r \circ \dN \gamma \circ (\dN r)^{-1} \colon \dN_MM^\gamma \rightarrow \dN_MM^\gamma.\]
But for any $m \in M$,
\[ r\circ \gamma \circ r^{-1}(m)=r(\gamma m)=r_\gamma \circ s_\gamma^{-1}(m)=\tilde{\gamma}(m). \]
Hence
\[ \dN r \circ \dN \gamma \circ (\dN r)^{-1}=\dN \tilde{\gamma} \colon \dN_MM^\gamma \rightarrow\dN_MM^\gamma \]
has no non-zero fixed vectors, and it follows that $M^\gamma$ is an open subset of $M^{\tilde{\gamma}}$. Since $M^\gamma$ is also closed, it must be a union of connected components of $M^{\tilde{\gamma}}$.
\end{proof}

\begin{proposition}
\label{p:transversefixedpt}
Let $\sf{K}$ be a compact Lie group acting on $G$ by bisections and let $\gamma \in \sf{K}$. Then $\rho \colon A \rightarrow TM$ is transverse to $TM^\gamma$.
\end{proposition}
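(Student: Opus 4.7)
The plan is to view the bisection $\gamma$ as the image of a smooth section $\sigma_\gamma \colon M \to G$ of the source map, which satisfies $s \circ \sigma_\gamma = \id_M$ and $r \circ \sigma_\gamma = \tilde{\gamma}$. For any $m \in M^\gamma$ the condition $\gamma m = m$ forces $\sigma_\gamma(m)$ to equal the unit arrow at $m$, so $\sigma_\gamma(m) = m$ viewed as an element of $G$. I would then extract the tangent vectors in $\rho_m(A_m)$ needed for transversality by differentiating $\sigma_\gamma$ at such a fixed point.

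Using the canonical splitting $T_m G = T_m M \oplus A_m$ coming from the submersion $s$ and the section $M \subset G$ (with $A_m = \ker T_m s$), I would write $T_m \sigma_\gamma(v) = (v_0, X(v))$ with $v_0 \in T_m M$ and $X(v) \in A_m$. Applying $T_m s$ to $s \circ \sigma_\gamma = \id_M$ immediately gives $v_0 = v$, and applying $T_m r$ (which is the identity on $T_m M$ and equals $\rho_m$ on $A_m$) to $r \circ \sigma_\gamma = \tilde{\gamma}$ yields the key identity
\[
\rho_m(X(v)) = (T_m\tilde{\gamma} - \id)(v).
\]
Letting $v$ range over $T_m M$ produces the inclusion $\tn{image}(T_m\tilde{\gamma} - \id) \subseteq \rho_m(A_m)$.

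To conclude I would invoke compactness of $\sf{K}$: averaging provides a $T_m\tilde{\gamma}$-invariant inner product on $T_m M$, so $T_m\tilde{\gamma}$ is orthogonal, in particular semisimple with eigenvalues on the unit circle, which yields the direct sum decomposition
\[
T_m M = \ker(T_m\tilde{\gamma} - \id) \oplus \tn{image}(T_m\tilde{\gamma} - \id).
\]
Property (ii) of compact group actions recalled before Proposition \ref{p:fixedpoint} identifies the kernel with $T_m M^{\tilde{\gamma}}$, and Proposition \ref{p:fixedpoint} identifies this with $T_m M^\gamma$ (since $M^\gamma$ is a union of connected components of $M^{\tilde{\gamma}}$). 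Substituting into the inclusion from the previous paragraph gives $T_m M = T_m M^\gamma + \rho_m(A_m)$, which is the required transversality at an arbitrary $m \in M^\gamma$.

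The main substantive step is the derivation of the relation $\rho_m(X(v)) = (T_m\tilde{\gamma} - \id)(v)$ from the bisection viewed as a section of $s$; once this is in hand the rest is routine linear algebra for orthogonal operators combined with the already-established structure of $M^\gamma$.
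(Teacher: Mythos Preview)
Your proof is correct. Both your argument and the paper's establish the same key inclusion $\tn{image}(T_m\tilde{\gamma}-\id)\subseteq \rho_m(A_m)$ and then finish identically using semisimplicity of $T_m\tilde{\gamma}$ and Proposition~\ref{p:fixedpoint}. The difference lies in how that inclusion is obtained. The paper introduces the orbit $O$ through $m$ and the normal representation of the isotropy group $G_m$ on $\dN_MO|_m$, observes that a bisection through $g\in G_m$ acts in this representation as $\dN\tilde{\gamma}$, and then notes that for $m\in M^\gamma$ the arrow $g$ is the identity so the normal representation is trivial, i.e.\ $\dN\tilde{\gamma}=\id$ on $T_mM/T_mO$. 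Unwinding, this says exactly $\tn{image}(T_m\tilde{\gamma}-\id)\subseteq T_mO=\rho_m(A_m)$. Your approach skips the orbit and the normal representation entirely: you differentiate the section $\sigma_\gamma=s_\gamma^{-1}$ at $m$ and read off $\rho_m(X(v))=(T_m\tilde{\gamma}-\id)(v)$ directly from the splitting $T_mG=T_mM\oplus A_m$. This is the same computation (your $T_m\sigma_\gamma(v)$ is the paper's lift $\hat{v}=Ts_\gamma^{-1}(v)$), but your packaging is more elementary and self-contained, while the paper's phrasing connects the result to the standard normal-representation machinery for Lie groupoids.
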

\begin{proof}
Let $m \in M$ have isotropy group $G_m=s^{-1}(m)\cap r^{-1}(m)$, and let $O\subset M$ be the orbit of $G$ through $m$. Recall (cf. \cite[p.56]{fernandes2015normal}) that the normal representation of $G_m$ on $\dN_MO|_m$ is defined as follows. For $g \in G_m$ and $[v] \in \dN_MO|_m$ with representative $v \in T_mM$, 
\[ g\cdot [v]=[Tr(\hat{v})], \quad \text{where} \quad \hat{v}\in T_gG, \quad Ts(\hat{v})=v,\]
and $Tr,Ts$ are the derivatives of $r,s$ respectively (one checks that the result is independent of the choice of lift $\hat{v}$ of $v$). If $\gamma$ is a bisection such that $\{g\}=\gamma \cap G_m$, then $\hat{v}=Ts_\gamma^{-1}(v)$ is a lift, hence
\begin{equation} 
\label{e:normalrep}
g\cdot [v]=[Tr(\hat{v})]=[T(r_\gamma\circ s_\gamma^{-1})(v)]=\dN\tilde{\gamma}([v]).
\end{equation}
Supposing in addition that $m \in M^\gamma$ so that $\gamma m=m=m\gamma$, then $g \in \gamma \cap G_m$ is the identity element of the isotropy group, and in particular, it acts trivially in the normal representation. Thus in this case equation \eqref{e:normalrep} implies $\dN\tilde{\gamma}|_m \in \End(\dN_MO|_m)$ is the identity. Since $\sf{K}$ is compact, the action of $T\tilde{\gamma}|_m$ on $T_mM$ is completely reducible, and hence there is a subspace $V_m\subset T_mM$ fixed by $T\tilde{\gamma}|_m$ that is mapped isomorphically to $\dN_MO|_m$ by the quotient map $T_mM\rightarrow \dN_MO|_m$. By linearizability of compact Lie group actions, $V_m\subset T_mM^{\tilde{\gamma}}$. It follows that $T_mO$, $T_mM^{\tilde{\gamma}}$ are transverse. By Proposition \ref{p:fixedpoint}, $M^\gamma$ is a union of connected components of $M^{\tilde{\gamma}}$, and hence $M^\gamma$ is transverse to $O$.
\end{proof}
Proposition \ref{p:transversefixedpt} implies that the pullback Lie algebroid $j^!A\rightarrow M^\gamma$ exists. Recall that the action of $\sf{K}$ on $G$ by bisections corresponds to a $G$-equivariant family of $\sf{K}$-actions on the $s$-fibers of $G$. The submanifold $G^\gamma\subset G$ is the union of all the $s$-fiber $\gamma$-fixed point loci. Under the identification $r^*A\simeq \ker(Ts)$ given by right translation, the $G$-equivariant subbundle
\[ (r|_{G^\gamma})^*(j^!A)\rightarrow r^{-1}(M^\gamma)=G^\gamma \]
identifies with the kernel of the derivative of $s|_{G^\gamma}\colon G^\gamma \rightarrow M$. Equivalently the restriction of $(r|_{G^\gamma})^*(j^!A)$ to each $s$-fiber is canonically identified with the tangent bundle of the $\gamma$-fixed point locus of that $s$-fiber.

By transversality, and using the orthogonal splitting determined by the metric $h$, one obtains an isomorphism of vector bundles
\begin{equation} 
\label{e:jsplit}
A|_{M^\gamma}=j^!A \oplus (j^!A)^\perp\simeq j^!A\oplus \dN_MM^\gamma.
\end{equation}
The isomorphism $\gamma^A \colon s_\gamma^*A\rightarrow r_\gamma^*A$ of vector bundles over $\gamma$ restricts to a bundle endomorphism $\gamma^A|_{M^\gamma} \in \End(A|_{M^\gamma})$ along the submanifold $M^\gamma=\gamma \cap M$. Assuming $h$ is $\sf{K}$-invariant, the splitting \eqref{e:jsplit} is preserved by $\gamma^A|_{M^\gamma}$,
\begin{equation}
\label{e:jsplit2}
\gamma^A|_{M^\gamma}=\gamma_0\oplus \gamma_1,
\end{equation}
where $\gamma_0\in \Gamma(\End(j^!A))$, $\gamma_1 \in \Gamma(\End(\dN_MM^\gamma))$.
\begin{proposition}
\label{p:gammaAaction}
With respect to the splitting \eqref{e:jsplit}, \eqref{e:jsplit2}, the action of $\gamma^A|_{M^\gamma}$ is
\[ \gamma_0=\id, \qquad \gamma_1=\dN\tilde{\gamma}.\]
In particular $\gamma_1$ has no non-zero fixed vector.
\end{proposition}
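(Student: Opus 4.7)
The plan is to use the decomposition $\Ad_\gamma = L_\gamma \circ R_{\gamma^{-1}}$ (where $L_\gamma$, $R_\gamma$ denote left and right multiplication by the bisection) and compute $\gamma^A=T(\Ad_\gamma)|_{A_m}$ separately on each summand of \eqref{e:jsplit}, for $m\in M^\gamma$.

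For $\gamma_0$, take $X\in(j^!A)_m$. By the identification recalled in the paragraph following Proposition \ref{p:transversefixedpt}, $(j^!A)_m\subset A_m=\ker T_ms$ is the tangent space at $m$ to the $\gamma$-fixed-point locus of the $s$-fiber $s^{-1}(m)$. Choose a smooth curve $\alpha\colon(-\vep,\vep)\to s^{-1}(m)$ lying in this fixed locus with $\alpha(0)=m$ and $\alpha'(0)=X$. Then $L_\gamma\alpha(t)=\alpha(t)$ by construction. Moreover, since $m\in M^\gamma=M^{\gamma^{-1}}$, the unique arrow in the bisection $\gamma^{-1}$ with range $m$ is the unit $m$ itself, and since $s(\alpha(t))=m$ we conclude $\alpha(t)\gamma^{-1}=\alpha(t)\cdot m=\alpha(t)$. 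Hence $\Ad_\gamma(\alpha(t))=\alpha(t)$ for all $t$, and differentiating at $t=0$ gives $\gamma^A(X)=X$.

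For $\gamma_1$, first observe that $\sf{K}$-invariance of $h$ makes $\gamma^A|_m$ an orthogonal transformation of $(A_m,h_m)$; combined with the identity $\gamma_0=\id$ just proved, this forces $\gamma^A|_m$ to preserve $(j^!A)^\perp_m$, confirming the block-diagonal form asserted in \eqref{e:jsplit2}. Since $\Ad_\gamma$ is a Lie groupoid automorphism covering $\tilde{\gamma}$, applying the Lie functor shows that $\gamma^A$ is a Lie algebroid automorphism covering $\tilde{\gamma}$; in particular $\rho\circ\gamma^A=T\tilde{\gamma}\circ\rho$. Under the isomorphism $(j^!A)^\perp\xrightarrow{\sim}\dN_MM^\gamma$ induced by $\rho$ followed by the quotient $TM|_{M^\gamma}\to\dN_MM^\gamma$ (an isomorphism by transversality, Proposition \ref{p:transversefixedpt}, and a rank count), and using that $\tilde{\gamma}$ fixes $M^\gamma$ pointwise so that $T\tilde{\gamma}$ descends to $\dN\tilde{\gamma}$ on the normal bundle, one concludes that $\gamma_1$ corresponds to $\dN\tilde{\gamma}$. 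The final assertion that $\gamma_1$ has no non-zero fixed vector is then immediate from the analogous statement for $\dN\tilde{\gamma}$ established in the proof of Proposition \ref{p:fixedpoint}.

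The main subtlety I anticipate is the identity $\alpha(t)\gamma^{-1}=\alpha(t)$ in Step 1, which requires carefully unpacking the definition of right multiplication by a bisection at a point of $M^\gamma$ and noting $M^{\gamma^{-1}}=M^\gamma$. Once this is in hand, the remainder is a rather formal consequence of $\Ad_\gamma$ being a groupoid automorphism covering $\tilde{\gamma}$ together with $\sf{K}$-invariance of $h$.
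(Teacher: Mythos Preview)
Your proof is correct and follows essentially the same approach as the paper's: both arguments handle $\gamma_0$ by choosing a curve in $s^{-1}(m)\cap G^\gamma$ and observing that $\Ad_\gamma$ fixes it pointwise (you invoke the identification stated after Proposition~\ref{p:transversefixedpt}, while the paper rederives the curve choice directly from $\rho(v)\in T_mM^\gamma$ and $G^\gamma=r^{-1}(M^\gamma)$), and both handle $\gamma_1$ via the intertwining relation $\rho\circ\gamma^A=T\tilde{\gamma}\circ\rho$ together with the isomorphism $(j^!A)^\perp\simeq\dN_MM^\gamma$. The only cosmetic differences are the order in which you treat the two summands and your explicit verification of the block-diagonal form, which the paper simply asserts just before the proposition.
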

\begin{proof}
Recall $\gamma^A$ results from applying the Lie functor to the Lie groupoid automorphism $\Ad_\gamma$. It follows that $\gamma^A|_{M^\gamma}$ is the restriction of the derivative $T\Ad_\gamma$ to $A|_{M^\gamma}\subset TG$. The range $r \colon G \rightarrow M$ intertwines $\Ad_\gamma$ with $\tilde{\gamma}$, and consequently the anchor map $\rho=Tr|_A$ intertwines $T\Ad_\gamma$ with $T\tilde{\gamma}$. Since $\rho$ restricts to an isomorphism $(j^!A)^\perp \xrightarrow{\sim}\dN_MM^\gamma$, we deduce that $\gamma_1=\dN\tilde{\gamma}$ under the identification. That there are no non-zero fixed vectors then follows from Proposition \ref{p:fixedpoint}.

Let $m \in M^\gamma$ and let $v \in j^!A_m$. Since $\rho(v)=Tr(v)\in T_mM^\gamma$, we may choose a smooth curve $g(t)$ with image contained in $s^{-1}(m)\cap r^{-1}(M^\gamma)=s^{-1}(m)\cap G^\gamma$ such that $g'(0)=v$. Then 
\[ \gamma^Av=\frac{\d}{\d t}\bigg|_{t=0} \gamma g(t)\gamma^{-1}. \]
But $g(t)\gamma^{-1}=g(t)$ since $s(g(t))=m$ and $m\gamma^{-1}=m$ as $m \in M^\gamma$. Likewise $\gamma g(t)=g(t)$ since $g(t)\subset G^\gamma$ is contained in the fixed point locus for left multiplication by $\gamma$. Thus $\gamma^Av=v$ proving $\gamma_0=\id$.
\end{proof}

\begin{corollary}
Let $A$ be oriented with metric $h$, and let $\sf{K}$ be a compact Lie group that acts on $G$ by bisections preserving $h$ and the orientation. Then the codimension of $M^\gamma$ is even.
\end{corollary}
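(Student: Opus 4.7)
The plan is to reduce the parity statement to a linear-algebraic fact about orthogonal transformations and then read off the needed structure from Proposition \ref{p:gammaAaction}. The codimension of $M^\gamma$ in $M$ equals the rank of the normal bundle $\dN_M M^\gamma$, so it suffices to show that this rank is even at every point of $M^\gamma$.

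First, I would invoke the orthogonal splitting \eqref{e:jsplit} together with Proposition \ref{p:gammaAaction} to write
\[ \gamma^A|_{M^\gamma}=\id_{j^!A}\oplus \gamma_1, \qquad \gamma_1\in \Gamma(\End(\dN_MM^\gamma)),\]
where $\gamma_1=\dN\tilde{\gamma}$ (under the identification $(j^!A)^\perp\simeq \dN_MM^\gamma$ induced by $\rho$) has no non-zero fixed vector. Because $h$ is $\sf{K}$-invariant, the induced metric on $\dN_MM^\gamma$ (via $h$ on $(j^!A)^\perp$) is preserved by $\gamma_1$, so $\gamma_1$ is fiberwise orthogonal. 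Because $\gamma$ preserves the orientation on $A$ and $\gamma_0=\id$ has determinant $1$ on $j^!A$, we get $\det(\gamma_1)=1$ on each fiber of $\dN_MM^\gamma$.

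Now I would apply the standard normal form for real orthogonal matrices: any orthogonal endomorphism of a finite-dimensional Euclidean space decomposes into a direct sum of $2$-dimensional rotation blocks (with determinant $1$), together with $\pm 1$ eigenspaces. The hypothesis that $\gamma_1$ has no non-zero fixed vector rules out the $+1$ eigenspace, leaving only $2$-dimensional rotation blocks and a $-1$ eigenspace of some dimension $k$. The determinant is then $(-1)^k=1$, so $k$ is even. Since the sum of the dimensions of the rotation blocks is also even, the rank of the fiber is even, and this rank is locally constant on $M^\gamma$.

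The argument is essentially immediate given Proposition \ref{p:gammaAaction}; the only step that requires any thought is ensuring that the orientation hypothesis on $A$ really forces $\det(\gamma_1)=1$ (rather than merely $\pm 1$), which is where the identity $\gamma_0=\id$ is crucial — without that input, the corollary could fail. I do not anticipate any genuine obstacle beyond assembling these pieces.
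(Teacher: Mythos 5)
Your proof is correct and follows essentially the same route as the paper: both pass to the orthogonal splitting $\gamma^A|_{M^\gamma}=\id\oplus\gamma_1$ from Proposition \ref{p:gammaAaction}, observe that $\gamma_1$ is a fiberwise orthogonal, orientation-preserving transformation with no fixed vector, and conclude parity from the eigenvalue structure ($-1$ with even multiplicity plus conjugate pairs). Your explicit appeal to the orthogonal normal form is just a slightly more spelled-out version of the same argument.
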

\begin{proof}
By assumption $\gamma^A|_{M^\gamma}=\id\oplus \gamma_1$ preserves the orientation and metric $h$, thus $\gamma_1$ is an orientation-preserving isometry of the subbundle $(j^!A)^\perp\simeq\dN_MM^\gamma$. Since $\gamma_1$ has no non-zero fixed vector, the possible eigenvalues are $-1$ and complex conjugate pairs $e^{\pm \i \theta}$. Then $\gamma_1$ preserves orientation if and only if $-1$ occurs with even multiplicity, which can only occur if $\dN_MM^\gamma$ has even rank.
\end{proof}

\section{Longitudinal index theory}
There is an algebra $\Psi^\infty(G)$ of $G$-pseudodifferential operators. Elliptic pseudodifferential operators yield classes in the K-theory of the smoothing operators $\Psi^{-\infty}(G)$. Traces give rise to maps from K-theory to $\bC$. By longitudinal index theory we mean the study of the pairing of traces with the classes of elliptic $G$-$\Psi$DO. In this section we briefly recall these notions. 

\subsection{Convolution algebra of $G$}
We briefly recall the smooth convolution algebra of $G$ and the algebra of $G$-pseudodifferential operators, and refer the reader to \cite{nistor1999pseudodifferential, monthubert1997indice, Vassout2006, van2019groupoid} for further details.

Let 
\[ \Lambda=|\det(A^*)|\boxtimes \bC \]
denote the (complexified) bundle of densities for the vector bundle $A$, and let $\Lambda^{1/2}$ denote the corresponding bundle of half-densities. The smooth convolution algebra $\Psi^{-\infty}(G)$ consists of smooth compactly supported sections of 
\[ \delta \Lambda^{1/2}=r^*\Lambda^{1/2}\otimes s^*\Lambda^{1/2}.\]
The convolution product is
\begin{equation} 
\label{e:conv}
f_1\star f_0=\partial_\ast(\partial_1^*f_1 \otimes \partial_0^*f_0),
\end{equation}
where the push-forward involves integration of a density along the fibers of $\partial$; slightly less formally,
\[ f_1\star f_0(g)=\int_{g_1g_0=g} f_1(g_1)f_0(g_0).\]
Later on we shall frequently omit the `$\star$' notation for brevity, especially in equations involving multiple compositions and in cases where there is little risk of confusion. $\Psi^{-\infty}(G)$ also admits an involution $f\mapsto f^*=\ol{\iota^*f}$ given by pullback under the inversion $\iota$ and complex conjugation.

There is a larger algebra of compactly supported generalized sections of $\delta \Lambda^{1/2}$ having wavefront set contained in the conormal bundle to $M$, with the product given by the same formula \eqref{e:conv}, and containing $\Psi^{-\infty}(G)$ as a 2-sided ideal. The algebra of $G$-pseudodifferential operators $\Psi^\infty(G)$ is a subalgebra of the $M$-conormal generalized sections consisting of those which, in a tubular neighborhood $\exp^h(U)$ of $M \subset G$, are given by inverse Fourier transform of a smooth function on $A^*$ lying in a suitable symbol space---see the references mentioned at the beginning of this section for the detailed definition. The identity element $\delta_M \in \Psi^\infty(G)$ is a generalized section supported along $M\subset G$.

Choosing a trivialization of $\Lambda$, the universal enveloping algebra of $A$ is identified with the subalgebra of $G$-differential operators in $\Psi^\infty(G)$, via the map that sends $T$ to its kernel in $\Psi^\infty(G)$, which will be denoted by the same symbol $T$, and is determined implicitly by the equality
\begin{equation} 
\label{e:convdiff}
T\star f=T^R f, \quad \forall f \in \Psi^{-\infty}(G).
\end{equation}
If $A$ is equipped with a metric $h$, there is a preferred trivialization of $\Lambda$.

If $E\rightarrow M$ is a complex vector bundle then there is a similar convolution $^*$-algebra $\Psi^{-\infty}(G,E)$ with coefficients, consisting of smooth compactly supported sections of $\delta (E\otimes \Lambda^{1/2})$, where the convolution product involves an additional contraction of elements of $E,E^*$. One likewise defines $\Psi^\infty(G,E)$, the algebra of $G$-pseudodifferential operators on $E$. More generally still given two complex vector bundles $E_1,E_0\rightarrow M$ then one has a $\Psi^\infty(G,E_1)$-$\Psi^\infty(G,E_0)$ bimodule $\Psi^{\infty}(G,E_1,E_0)$ of $G$-pseudodifferential operators.

The space of composable arrows $G^{(2)}$ is diffeomorphic to the $s$-map fiber product $G\times_M G$ via the map 
\[ \id\times \iota \colon G\times_M G \xrightarrow{\sim}G^{(2)}.\]
Elements of $\Psi^{\infty}(G)$ (resp. $\Psi^{-\infty}(G)$) have a secondary interpretation as $G$-equivariant smooth families of pseudodifferential operators (resp. smoothing operators) on the $s$-fibers of $G$. This correspondence $f\mapsto k^f$ is given by pullback under the map
\[ \partial\circ (\id \times \iota) \colon G\times_M G \rightarrow G, \qquad (g_1,g_0)\mapsto g_1g_0^{-1}.\]
More informally $f(g)$ corresponds to the Schwartz kernel $k^f(g_1,g_0)=f(g_1g_0^{-1})$, which is $G$-invariant under the diagonal right multiplication: $k^f(g_1g,g_0g)=k^f(g_1,g_0)$. The correspondence takes convolution to composition of Schwartz kernels, the involution $f\mapsto f^*$ to the usual involution on Schwartz kernels, and takes the kernel of an element $T$ of the universal enveloping algebra to the kernel of the $G$-equivariant family $T^R$. In the case of $\Psi^\infty(G,E)$, the corresponding family of Schwartz kernels have coefficients in the vector bundle
\[ (\partial\circ(\id \times \iota))^*\delta (\Lambda^{1/2}\otimes E)=(r\circ \partial_1)^*(\Lambda^{1/2}\otimes E)\otimes (r\circ \partial_0)^*(\Lambda^{1/2}\otimes E^*).\]
Pulling back to $s^{-1}(m)\times s^{-1}(m)\subset G\times_M G$, the maps $\partial_1$, $\partial_0$ become the projection maps onto the two factors, hence the above bundle becomes the usual exterior tensor product
\[ r^*(\Lambda^{1/2}\otimes E)|_{s^{-1}(m)}\boxtimes r^*(\Lambda^{1/2}\otimes E^*)|_{s^{-1}(m)}\rightarrow s^{-1}(m)\times s^{-1}(m).\]

\subsection{Traces}
Let $\tau \colon \Psi^{-\infty}(G)\rightarrow \bC$ be a trace, i.e. a linear functional that vanishes on commutators. We shall assume throughout that $\tau$ is given by restriction to the unit space $M$ composed with a continuous linear functional $\Gamma(\Lambda)\rightarrow \bC$. We will denote this linear functional by the same symbol $\tau$, as it will be clear from context which is meant. One checks that the trace property is satisfied if and only if $\tau(s_*f)=\tau(r_*f)$ for all smooth compactly supported sections $f$ of $\delta \Lambda$.

The functional $\Gamma(\Lambda)\rightarrow \bC$ may be regarded as a generalized (distributional) section, also denoted $\tau$, of the complex line bundle $\Lambda_M\otimes \Lambda^{-1}$, where $\Lambda_M$ denotes the bundle of $1$-densities on $M$. The line bundle $\Lambda_M\otimes \Lambda^{-1}$ carries a natural representation of $G$, see for example \cite{crainic2020transverse, crainic2020measures, evens1996transverse}. The trace property implies that $\tau$ is invariant with respect to this representation \cite[Proposition 3.6]{crainic2020measures}. Put in other terms, there is a natural action of $\tn{Bis}(G)$ on $\Lambda_M\otimes \Lambda^{-1}$ covering the conjugation action on $M$, and $\tau$ is invariant under this action.
\begin{example}
Let $G=\tn{Pair}(M)$. There is a trace $\tau \colon \Psi^{-\infty}(G)\rightarrow \bC$ given by integration over the diagonal in $M\times M$. Invariance under $\tn{Bis}(G)\simeq \tn{Diff}(M)$ holds because integration of densities is invariant under arbitrary diffeomorphisms. The line bundle $\Lambda_M\otimes \Lambda^{-1}\simeq M\times \bC$ is canonically trivial, and $\tau$ corresponds to the constant section $1$.
\end{example}
Infinitesimally, invariance implies
\begin{equation} 
\label{e:PsiInv}
\L_X\tau=0, \qquad \forall X\in \Gamma(A),
\end{equation}
where $\L_X$ denotes the canonical representation of $A$ on $\Lambda^{-1}\otimes \Lambda_M$, given by
\[ \L_X(\Xi\otimes \mu)=\L_X\Xi\otimes \mu+\Xi\otimes \L_{\rho(X)}\mu.\]
That this is a representation amounts to the fact that the formula is $C^\infty(M)$-linear in $X$.

\subsection{Equivariant trace pairing}\label{s:equivtracepairing}
A trace $\tau$ defines a homomorphism
\[ \pair{\tau}{-}\colon K_0^\sf{K}(\Psi^{-\infty}(G))\rightarrow C^\infty(\sf{K})^\sf{K} \]
that we shall refer to as the equivariant trace pairing. For $f \in \Psi^{-\infty}(G)$ let $\tau^\gamma(f)=\tau(\gamma f)=\tau(f^\gamma)$. On an element of the form 
\[ [e]-[f] \in K_0^\sf{K}(\Psi^{-\infty}(G))=\ker(K_0^\sf{K}(\Psi^{-\infty}(G)^+)\rightarrow K_0^\sf{K}(\bC))\] 
where $e,f$ are $\sf{K}$-invariant idempotents in the unitization $\Psi^{-\infty}(G)^+=\Psi^{-\infty}(G)\oplus \bC 1$, it is given by
\[ \pair{\tau}{[e]-[f]}(\gamma)=\tau^\gamma(e)-\tau^\gamma(f), \]
where $\tau^\gamma$ is extended to the unitization by defining $\tau^\gamma(1)=0$.

Let $d \in \Psi^\infty(G)^\sf{K}$ be a $\sf{K}$-equivariant elliptic $G$-pseudodifferential operator. Such an operator defines a class
\[ [d] \in K^\sf{K}_0(\Psi^{-\infty}(G)).\]
From a conceptual point of view, the image of $d$ in the quotient $\Psi^\infty(G)/\Psi^{-\infty}(G)$ is invertible, hence defines an element in the algebraic $K^\sf{K}_1(\Psi^\infty(G)/\Psi^{-\infty}(G))$, and $[d]$ is the image of this class under the boundary map to $K^\sf{K}_0(\Psi^{-\infty}(G))$. One has a more explicit description of $[d]$ as follows. Let $p$ be a $\sf{K}$-equivariant parametrix for $d$, i.e. $pd=1-q$, $dp=1-r$ and $q,r \in \Psi^{-\infty}(G)$. Equivalently $p$ is a choice of representative for the inverse of the image of $d$ in the quotient algebra $\Psi^\infty(G)/\Psi^{-\infty}(G)$. Define the following idempotents in the algebra of $2\times 2$ matrices with entries in the unitization of $\Psi^{-\infty}(G)$:
\[ e=\m{q^2&q(1+q)p\\rd&1-r^2}, \quad e_0=\m{0&0\\0&1} \in M_2(\Psi^{-\infty}(G)^+).\] 
By definition
\[ [d]=[e]-[e_0]\in K^\sf{K}_0(\Psi^{-\infty}(G)),\]
and one finds that for $\gamma \in \sf{K}$,
\begin{equation} 
\label{e:tauind}
\pair{\tau}{[d]}(\gamma)=\tau^\gamma(e-e_0)=\tau(\gamma q^2)-\tau(\gamma r^2).
\end{equation}
For the reader's benefit we provide a proof of the standard result that the pairing is independent of the choice of parametrix, using the following lemma.

\begin{lemma}
Let $k \in \Psi^{-\infty}(G)$ and $d \in \Psi^\infty(G)$. Then $\tau([d,k])=0$.
\end{lemma}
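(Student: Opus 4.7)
The plan is to use an approximate identity argument in $\Psi^{-\infty}(G)$ to reduce to the trace property applied to pairs of smoothing operators. First, since $\Psi^{-\infty}(G)$ is a two-sided ideal in $\Psi^\infty(G)$, the commutator $[d,k] = dk - kd$ lies in $\Psi^{-\infty}(G)$, so $\tau([d,k])$ is well defined.

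The key step is to construct an approximate identity $\{\chi_n\}_{n \geq 1} \subset \Psi^{-\infty}(G)$ for the convolution algebra, meaning $\chi_n \star f \to f$ in $\Psi^{-\infty}(G)$ (in the natural Fr\'echet topology of compactly supported smooth sections of $\delta\Lambda^{1/2}$) as $n \to \infty$ for every $f \in \Psi^{-\infty}(G)$. Explicitly, one takes $\chi_n$ to be the pushforward under the exponential map $\exp^h\colon U \to G$ of a standard mollifier on $A$ (a smooth compactly supported density of total mass one, rescaled to concentrate at the zero section). With such $\chi_n$ in hand, the central computation uses the ideal property (so that $d\chi_n$ and $kd$ both lie in $\Psi^{-\infty}(G)$), associativity of convolution, and the trace property applied to pairs in $\Psi^{-\infty}(G)$:
\[
\tau(d(\chi_n k)) = \tau((d\chi_n)k) = \tau(k(d\chi_n)) = \tau((kd)\chi_n) = \tau(\chi_n(kd)).
\]
Passing to the limit, left multiplication by the pseudodifferential operator $d$ is continuous on $\Psi^{-\infty}(G)$, so $d(\chi_n k) \to dk$, and similarly $\chi_n(kd) \to kd$. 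Continuity of $\tau$ then yields $\tau(dk) = \tau(kd)$, i.e. $\tau([d,k]) = 0$.

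The main technical point is verifying the continuity of the pseudodifferential left action on $\Psi^{-\infty}(G)$ together with the existence of the approximate identity with the required convergence, both of which are standard in Lie groupoid pseudodifferential calculus (analogues of classical facts for smoothing operators on a closed manifold). An alternative route that avoids approximate identities is to express $\tau(dk) - \tau(kd)$ directly as $\tau(r_* F - s_* F)$, where $F = d \cdot \iota^* k$ is the (generalized) section of $\delta\Lambda$ on $G$ built from $d$ and $k$, and then invoke the equivalence between the trace property and the identity $\tau(s_* F) = \tau(r_* F)$ stated in the previous subsection, extended from smooth compactly supported $F$ to the present conormal setting by a density argument.
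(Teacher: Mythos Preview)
Your argument is correct and complete, but the paper takes a different route. Instead of an approximate identity and a limiting argument, the paper invokes the Dixmier--Malliavin theorem for Lie groupoids (citing \cite{francis2020, lescure2020evolution}) to write $k$ as a \emph{finite} sum of convolutions $k=\sum ll'$ with $l,l'\in\Psi^{-\infty}(G)$. One then computes algebraically, using only the trace property on pairs of smoothing operators:
\[
\tau(dll'-ll'd)=\tau(l'dl-ll'd)=\tau([l'd,l])=0.
\]
This avoids any limit and any appeal to continuity of the $\Psi^\infty$-action, at the cost of importing the Dixmier--Malliavin factorization theorem. Your approach is more elementary in that it needs only the existence of a left approximate identity (which the paper itself effectively provides later via the asymptotic heat kernel, cf.\ Corollary \ref{c:AHKeqn}) together with continuity of convolution by a fixed $d\in\Psi^\infty(G)$; both of these are indeed standard. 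The two arguments are morally the same manoeuvre---trading the inaccessible pair $(d,k)$ for an accessible pair in $\Psi^{-\infty}(G)\times\Psi^{-\infty}(G)$---but the paper's version is purely algebraic once the factorization is granted.
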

\begin{proof}
Apply the Dixmier-Malliavin theorem for Lie groupoids \cite{francis2020, lescure2020evolution} to write $k$ as a finite sum of convolutions $ll'$ with $l \in C^\infty_c(G)$, $l' \in \Psi^{-\infty}(G)$. Then
\[ \tau([d,k])=\tau(dll'-ll'd)=\tau(l'dl-ll'd)=\tau([l'd,l])=0.\]
\end{proof}

\begin{proposition}
\label{p:tracepairingindependent}
The right hand side of \eqref{e:tauind} is independent of the choice of parametrix. 
\end{proposition}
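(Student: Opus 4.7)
The plan is to let $p, p'$ be two $\sf{K}$-equivariant parametrices for $d$ and show that the right-hand side of \eqref{e:tauind} does not change. Setting $h = p'-p$, since both $p$ and $p'$ are $\sf{K}$-equivariant the difference lies in $\Psi^{-\infty}(G)^{\sf{K}}$, and a direct algebraic computation gives $q' = q + hd$ and $r' = r + dh$. Expanding the squares yields
\[ q'^2 - q^2 = qhd + hdq + (hd)^2, \qquad r'^2 - r^2 = rdh + dhr + (dh)^2.\]
The aim is then to show that $\tau^\gamma$ evaluated on these two expressions agree.

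Before doing so, I would isolate two trace-like properties of $\tau^\gamma$. First, on $\sf{K}$-invariant smoothing operators $\tau^\gamma$ is a genuine trace: for $a \in \Psi^{-\infty}(G)^{\sf{K}}$ and $b \in \Psi^{-\infty}(G)$, unfolding gives $\tau^\gamma(ab) = \tau(a b^\gamma) = \tau(b^\gamma a) = \tau^\gamma(ba)$, using $a^\gamma = a$ and the trace property of $\tau$. Second, the $\sf{K}$-equivariance of $d$ combined with the preceding lemma yields $\tau^\gamma(dk) = \tau(d k^\gamma) = \tau(k^\gamma d) = \tau^\gamma(kd)$ for any $k \in \Psi^{-\infty}(G)$, so a factor of $d$ may always be cycled under $\tau^\gamma$. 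All the elements $q, r, q', r', h$ appearing in the computation are $\sf{K}$-invariant, so both properties apply to them.

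Using these together with the algebraic identity $dq = d - dpd = d - (1-r)d = rd$, the six terms reduce as follows. The $d$-cycling gives $\tau^\gamma(qhd) = \tau^\gamma(dqh) = \tau^\gamma(rdh)$, and the trace property on $\sf{K}$-invariants then gives $\tau^\gamma(hdq) = \tau^\gamma(qhd) = \tau^\gamma(rdh)$; one more application of $d$-cycling gives $\tau^\gamma((hd)^2) = \tau^\gamma((dh)^2)$. Symmetrically, $\tau^\gamma(dhr) = \tau^\gamma(hrd) = \tau^\gamma(rdh)$. Therefore
\[ \tau^\gamma(q'^2 - q^2) = 2\tau^\gamma(rdh) + \tau^\gamma((dh)^2) = \tau^\gamma(r'^2 - r^2),\]
which proves independence of the parametrix.

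The main obstacle I anticipate is not any individual manipulation but rather verifying cleanly that $\tau^\gamma$ has the two trace-like properties above in the presence of the $\gamma$-twist; everything else is algebra and bookkeeping. Once these are disentangled from each other and from the identity $dq = rd$, the proof is a short calculation.
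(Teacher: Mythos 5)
Your proof is correct and rests on the same two ingredients as the paper's: the trace property of $\tau$ on $\Psi^{-\infty}(G)$ (in $\gamma$-twisted form) and the lemma that $\tau^\gamma([d,k])=0$ for $k$ smoothing and $d$ a $\sf{K}$-invariant pseudodifferential operator. The organization differs slightly: the paper writes $q^2-r^2$ in terms of $p$ and $d$, observes that the change under $p\mapsto p+k$ is an explicit sum of commutators, and invokes the lemma directly, whereas you expand $q'^2-q^2$ and $r'^2-r^2$ separately and match the resulting six terms using $dq=rd$ together with $d$-cycling; mathematically these are the same argument in different bookkeeping. One small slip: with $h=p'-p$ one has $q'=1-p'd=q-hd$ and $r'=r-dh$ (not $+$), but since you carry the sign consistently through both expansions the conclusion is unaffected.
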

\begin{proof}
One has
\[ q^2-r^2=2[p,d]+dpdp-pdpd.\]
Replacing $p$ with a different parametrix $p+k$, $k \in \Psi^{-\infty}(G)$ changes this by the operator
\[ 2[p,k]+(dpdk-pdkd)+(dkdp-kdpd)+(dkdk-kdkd) \]
and each of the bracketed terms vanishes under $\tau$ using the previous lemma. The same argument applies in the equivariant case since $\tau^\gamma$ is a trace on the subalgebra of operators commuting with $\gamma$.
\end{proof}

\begin{remark}
In the case with coefficients we may have for example $d\in \Psi^\infty(G,E_0,E_1)$ in which case $p\in \Psi^\infty(G,E_1,E_0)$, $q \in \Psi^{-\infty}(G,E_1)$, $r\in \Psi^{-\infty}(G,E_0)$.
\end{remark}

Let $A$ be equipped with a metric and orientation. Let $(E,c\colon \Cl(A^*)\rightarrow \End(E),\nabla)$ be a $\sf{K}$-equivariant $\Cl(A^*)$-module equipped with a $\gamma$-equivariant Clifford $A$-connection $\nabla$. Let $D$ be the corresponding $\sf{K}$-equivariant $A$-Dirac operator. The metric and orientation on $A$ define a trivialization of the half-density bundle $\Lambda^{1/2}$, and hence we may regard $D$ as an element of $\Psi^1(G,E)$. The operator $D$ is odd so takes the form
\[ D=\m{0&D^-\\D^+&0}.\]
The $G$-pseudodifferential calculus guarantees existence of a $\sf{K}$-invariant parametrix $P\in \Psi^{-1}(G,E)$, which we may take to be odd with off diagonal entries $P^{\pm}$. Let
\[ PD=1-Q, \qquad DP=1-R \]
where $Q,R\in \Psi^{-\infty}(G,E)$ are even smoothing operators with diagonal components $Q^{\pm}$, $R^{\pm}$.

In particular $d=D^+\in \Psi^\infty(G,E^+,E^-)$ is a $\sf{K}$-equivariant elliptic $G$-pseudodifferential operator hence defines a class $\tn{ind}(D):=[d]\in K_0^\sf{K}(\Psi^{-\infty}(G))$, and the equivariant $\tau$-index $\pair{\tau}{\tn{ind}(D)} \in C^\infty(\sf{K})^\sf{K}$ is defined. One has $p=P^-$, $q=Q^+$, $r=R^-$ and hence
\begin{equation} 
\label{e:gradedtauind}
\pair{\tau}{\tn{ind}(D)}(\gamma)=\tau^\gamma(Q^{+,2})-\tau^\gamma(R^{-,2}).
\end{equation}

\section{Asymptotic heat kernels}\label{s:asymptoticheat}
Let $\Delta=D^2\in \Psi^2(G,E)$ be the generalized $A$-Laplacian associated to an $A$-Dirac operator $D\in \Psi^1(G,E)$. In this section we describe how to construct a parametrix $K$ for the heat operator $\partial_t+\Delta^R$, by Borel summation of a formal solution to the heat equation. We use $K$ to construct a parametrix $P$ for $D$. The nature of the construction allows the equivariant trace pairing \eqref{e:gradedtauind} to be expressed in terms of the asymptotic expansion of the heat kernel.

Before coming to the construction of $K$, we briefly explain why we have not worked with the true heat kernel. This discussion will not be used in the sequel. By Proposition \ref{p:bdedgeom} the $s$ fibers of $G$ are Riemannian manifolds of bounded geometry. The restriction of $\Delta^R$ to each $s$ fiber $s^{-1}(m)$ has a well-defined heat kernel $\exp(-t\Delta^R|_{s^{-1}(m)})$ with a small-time asymptotic expansion along the diagonal similar to the case of a compact Riemannian manifold, cf. \cite[Proposition 2.11]{roe1988index}. These operators may be constructed by solving the heat equation or by functional calculus. The family of $s$-fiber heat kernels fit together into an element $\exp(-t\Delta^R)\in C^*_r(G,E)$, which may be viewed as the result of applying functional calculus to the unbounded regular self-adjoint multiplier $D^R$ of the $C^*$-algebraic completion $C_r^*(G,E)$ of $\Psi^{-\infty}(G,E)$ (for regularity see for example \cite[Section 3.6]{Vassout2006}). 

The full list of desired properties of the heat kernel $\exp(-t\Delta^R)$ for a general Lie groupoid $G$ have not, to our knowledge, been established. For example it is not known whether $\exp(-t\Delta^R)$ is smooth transversely to the $s$ fibers (cf. \cite[Conjecture 1.6]{bkso2014exponential}, \cite[Remark 4.8]{bohlen2018getzler}). Smoothness is known in important special cases, including for foliations \cite{heitsch1995bismut} and b-manifolds \cite{melrose1993atiyah}; see also \cite{bkso2014exponential} for further comments and results. Since we aim to compute the pairing of $\tn{ind}(D)$ with a possibly distributional trace $\tau$, this becomes important, and it is for this reason that we work instead with the smooth `asymptotic heat kernel' $K$ below. A similar object could be constructed using more machinery: pseudodifferential calculus for the heat operator $\partial_t+\Delta^R$ (see Section \ref{s:heatparametrix} for further comments).

\subsection{Formal heat kernels}
Following \cite[Chapter 2]{BerlineGetzlerVergne} or \cite{roe1998elliptic}, the first step in the construction of a heat kernel is to find a formal algebraic solution. Let $U\subset A$ be a radius $\kappa>0$ disk bundle, where $\kappa$ is chosen such that the exponential map $\exp^h$ defined by the metric on $A$ restricts to a diffeomorphism from $U$ to an open neighborhood of $M$ in $G$. By shrinking $U$ if necessary, we may assume without loss of generality that $\exp^h$ is a diffeomorphism on a slightly larger disk bundle containing the compact closure of $U$. For $X \in U_m \subset A_m$, it is convenient to label the point $\exp^h_m(X)$ by the ordered pair $(X,m)$. We frequently use $\exp^h$ to identify $U$ with its image under the exponential map, and likewise $U_m$ is identified with an open ball around $m$ in $s^{-1}(m)$. 

Following \cite[Chapter 2]{BerlineGetzlerVergne}, the use of half-densities slightly simplifies the discussion of the formal solution. The metric on $A$ determines a trivializing section $\nu$ of $\Lambda$. The pullback $s^*\nu$ is a trivializing section of $s^*\Lambda$. Let $\tilde{\nu}$ be the pullback of $\nu$ to the total space of $A$, regarded as a density for the vertical bundle of the projection $A \rightarrow M$. It pushes forward to a density
\[ \d X=\exp^h_*\tilde{\nu} \] 
for the vertical bundle of the projection $s|_U\colon U\rightarrow M$, or in other words, to a section of $r^*\Lambda|_U$. We use $\d X^{1/2}\otimes s^*\nu^{1/2}$ to trivialize $r^*\Lambda^{1/2}\otimes s^*\Lambda^{1/2}|_U$ (compare \cite[Chapter 2]{BerlineGetzlerVergne}).

A \emph{formal heat kernel} is a formal series of the form
\begin{equation} 
\label{e:formal}
F_t(X,m)=q_t(X)\sum_{i=0}^\infty t^i\Theta_i(X,m)\nu_m^{1/2}, \qquad q_t(X)=(4\pi t)^{-n/2}e^{-|X|^2/4t}\d X^{1/2}
\end{equation}
that satisfies the heat equation (with initial condition) for the family of generalized Laplacian operators $\Delta^R|_U$ along the fibers of $s|_U \colon U \rightarrow M$. Using parallel translation along radial geodesics in $U_m$, one has an isomorphism $(r^*E)_{\exp^h_m(X)}=(r^*E)_{(X,m)}\simeq E_m$, and under this isomorphism $\Theta_i(-,m)$ may be regarded as a $\End(E_m)$-valued function on $U_m$. Applying the discussion in \cite[Chapter 2]{BerlineGetzlerVergne} to each fiber of $s|_U$ shows that the $\Theta_i$ are uniquely determined by recursively solving a sequence of ODE:
\begin{equation} 
\label{e:recursive}
(\nabla_\R+i)\Theta_i(-,m)=-B\Theta_{i-1}(-,m), \qquad \Theta_0(0_m,m)=\id,
\end{equation}
where $\R$ is the Euler vector field of $A$ (or if one prefers, its pushforward under $\exp^h$), and $B=\d X^{-1/2}\Delta^R\d X^{1/2}$. The solution is unique and given by explicit formulas as in \cite[Chapter 2]{BerlineGetzlerVergne}, and in particular it follows that the $\Theta_i$ are smooth in both arguments $(X,m)$.

\subsection{Borel summation}\label{s:Borel}
The formal series \eqref{e:formal} is not guaranteed to converge. To fix this we use Borel summation. Let $\beta \colon \bR\rightarrow [0,1]$ be a smooth function with $\beta(t)=1$ when $|t|\le 1/2$ and $\beta(t)=0$ when $|t|\ge 1$. Define
\begin{equation} 
\label{e:bformal}
F^b_t(X,m)=q_t(X)\sum_{i=0}^\infty \beta(b_it)t^i\Theta_i(X,m)\nu_m^{1/2}, 
\end{equation}
where $b=(b_i)_{i=0}^\infty$, $b_i\ge 1$ is a suitably chosen increasing sequence of positive real numbers. Note that $F_t^b$ also depends on the bump function $\beta$, although we omit this dependence from the notation.
\begin{proposition}
\label{p:suitableb}
For a suitably chosen sequence $b$, the formal sum $\sum_{i=0}^\infty \beta(b_it)t^i\Theta_i$ in \eqref{e:bformal} converges on $\bR\times U$, uniformly in each $C^k$-norm. In particular $F^b_t$ is smooth on $(0,\infty)\times U$.
\end{proposition}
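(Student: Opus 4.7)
The plan is a standard Borel-summation argument. Since $M$ is compact, the closed disk bundle $\overline{U} \subset A$ is compact, and the smoothness of each $\Theta_i$ on a neighborhood of $\overline{U}$ ensures that
\[ M_{i,k} := \bigl\|\Theta_i(-,-)\,\nu^{1/2}\bigr\|_{C^k(\overline{U})} \]
is finite for all $i, k \geq 0$. The essential mechanism is that $\beta(b_i t)$ is supported in $|t| \leq 1/b_i$, so on the support of the $i$-th summand the scalar factor $t^i$ is bounded by $b_i^{-i}$, which can be made arbitrarily small by enlarging $b_i$. The task is therefore to choose $b_i$ so rapidly growing that these small factors override the growth of the $C^k$-norms $M_{i,k}$ for every $k$ simultaneously.

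First I would derive a per-term derivative estimate. For a multi-index $\alpha = (\alpha_t, \alpha_y)$ of total order $k$, with $\alpha_y$ collecting derivatives in the compact variables $(X,m) \in \overline{U}$, the Leibniz rule gives
\[ D^\alpha\bigl(\beta(b_i t)\,t^i\,\Theta_i\bigr) = \sum_{j=0}^{\alpha_t} \binom{\alpha_t}{j}\, b_i^j\, \beta^{(j)}(b_i t)\,\tfrac{i!}{(i-\alpha_t+j)!}\,t^{\,i-\alpha_t+j}\,D^{\alpha_y}\Theta_i, \]
with the convention that the summand vanishes when $i < \alpha_t - j$. Using $|\beta^{(j)}| \leq c_j$ together with $|t| \leq 1/b_i$ on the support of $\beta(b_i t)$, a crude estimate yields
\[ \bigl|D^\alpha(\beta(b_i t)\,t^i\,\Theta_i)\bigr| \leq C_k\, i^k\, b_i^{k-i}\, M_{i,k}, \qquad i \geq k, \]
where $C_k$ depends only on $k$ and on the sup-norms of derivatives of $\beta$ up to order $k$.

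The choice of $b_i$ is then made by a diagonal procedure, for example
\[ b_i := 1 + 2^i\,(1+i)^i\,\max_{0 \leq k \leq i} C_k\, M_{i,k}. \]
For every fixed $k$ and every $i \geq k+1$, we have $b_i^{k-i} \leq b_i^{-1}$, so the above bound collapses to $C_k\,i^k\,M_{i,k}/b_i \leq 2^{-i}$. Summing over $i$, the tail of $\sum_i D^\alpha(\beta(b_i t)\,t^i\,\Theta_i)$ converges absolutely and uniformly on $\bR \times \overline{U}$ for every multi-index $\alpha$ of order $\leq k$; this is precisely convergence of $\sum_i \beta(b_i t)\,t^i\,\Theta_i$ in the $C^k$-norm, and smoothness of $F_t^b$ on $(0,\infty)\times U$ follows upon multiplication by the smooth function $q_t$. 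The only point requiring care, and in that sense the main obstacle, is that the single sequence $b_i$ must dominate derivative bounds of \emph{all} orders at once; this is arranged by the inner $\max_{k \leq i}$ in the definition of $b_i$, which is the standard diagonal trick in Borel summation.
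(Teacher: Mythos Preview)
Your proof is correct and follows essentially the same Borel-summation approach as the paper: both exploit that $\beta(b_it)$ is supported in $|t|\le 1/b_i$, apply Leibniz to bound derivatives of each summand, and choose $b_i$ via a diagonal trick so that a single sequence dominates all $C^k$ bounds simultaneously. The only cosmetic difference is that the paper uses the simpler choice $b_i=\max(2\|\Theta_i\|_{C^i},1)$, invoking the monotonicity $\|\Theta_i\|_{C^k}\le \|\Theta_i\|_{C^i}$ for $k\le i$ in place of your explicit $\max_{0\le k\le i}C_kM_{i,k}$.
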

\begin{proof}
Recall that the compact closure $\ol{U}$ of $U$ is contained in a larger disk bundle in $A$ on which $\exp^h$ remains a diffeomorphism. The equations \eqref{e:recursive} can be solved over this larger neighborhood, and hence the $\Theta_i$ extend smoothly to an open set containing $\ol{U}$. By compactness of $\ol{U}$, the $C^k$-norm $\|\Theta_i|_U\|_{C^k}=C_{k,i}<\infty$. Let $b_i=\tn{max}(2C_{i,i},1)$. To bound arbitrary partial derivatives with respect to $t$, one must consider series of the form
\[ \sum_{i=0}^\infty b_i^l \frac{i!}{(i-q)!}\beta^{(l)}(b_it)t^{i-q}\Theta_i.\]
Note $i!/(i-q)!\le i^q$. The $C^k(U)$ norm of the $i\ge k$ tail of this series is bounded by
\[ \sum_{i=k}^\infty b_i^li^q\beta^{(l)}(b_it)|t|^{i-q}C_{k,i}.\]
Let $B_l=\|\beta^{(l)}\|_{C^0}$. The function $\beta^{(l)}(b_it)$ has support contained in $[-b_i^{-1},b_i^{-1}]$, so we obtain an upper bound
\[ B_l\sum_{i=k}^\infty b_i^{l+q-i}i^qC_{k,i}.\]
For $i\ge k$ we have $C_{k,i}\le C_{i,i}\le b_i/2$, which leads to the simpler bound
\[ B_l\sum_{i=k}^\infty b_i^{l+q+1-i}\frac{i^q}{2^i}, \]
which converges rapidly once $i\gg l+q$.
\end{proof}
With $b$ chosen as in Proposition \ref{p:suitableb}, we will refer to $F^b_t$ as a \emph{Borel summation of the formal heat kernel}.

Below it will be convenient to carry out various calculations modulo $O(t^\infty)$ error. With this in mind we make the following definition.
\begin{definition}
Let $A_t,B_t$ be smooth families (in $t \in (0,\infty)$) of distributional sections of a normed vector bundle defined on $(0,\infty)\times U$. We shall write $A_t=B_t+O(t^\infty)$ if $A_t-B_t$ is smooth, and for each $k=0,1,2,...$, the $C^k(U)$-norm $\|A_t-B_t\|_{C^k(U)}\rightarrow 0$ as $t\rightarrow 0^+$ faster than any power of $t$. In this case $A_t-B_t$ extends smoothly by $0$ to $t\le 0$.
\end{definition}

\begin{lemma}
\label{l:Fbheat}
A Borel summation of the formal heat kernel $F^b_t$ satisfies
\[ (\partial_t+\Delta^R)F^b_t=O(t^\infty). \]
\end{lemma}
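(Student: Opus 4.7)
The plan is to apply the Leibniz rule to $F^b_t$, use the recursion \eqref{e:recursive} to make almost everything cancel, and show that the residual terms — those where $\partial_t$ lands on one of the cutoffs $\beta(b_it)$ — are supported sufficiently far from $t=0$ relative to $i$ that the whole error is $O(t^\infty)$.

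First I would establish the source-fiber version of the standard heat-kernel identity of \cite[Chapter 2]{BerlineGetzlerVergne}: for each $i\ge 0$,
\[
(\partial_t + \Delta^R)\bigl(q_t\, t^i \Theta_i\, \nu_m^{1/2}\bigr) \;=\; q_t\, t^{i-1}\bigl((\nabla_\R + i)\Theta_i\bigr)\nu_m^{1/2} \;+\; q_t\, t^i (B\Theta_i)\nu_m^{1/2}.
\]
Invoking \eqref{e:recursive} rewrites the first term on the right as $-q_t t^{i-1}(B\Theta_{i-1})\nu_m^{1/2}$ for $i\ge 1$, while for $i=0$ that term vanishes since $\nabla_\R \Theta_0 = 0$ (the recursion with $\Theta_{-1}=0$). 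Multiplying by $\beta(b_it)$, adding the contribution $b_i\beta'(b_it)\, q_t t^i \Theta_i\nu^{1/2}$ from $\partial_t$ landing on the cutoff, summing over $i$, and reindexing the telescoping portion yields
\[
(\partial_t + \Delta^R) F^b_t \;=\; \sum_{i\ge 0} b_i\beta'(b_it)\,q_t t^i \Theta_i\,\nu^{1/2} \;+\; \sum_{i\ge 0} \bigl(\beta(b_it)-\beta(b_{i+1}t)\bigr)\, q_t t^i B\Theta_i\,\nu^{1/2}.
\]
Term-by-term differentiation is justified because $F^b_t$ converges in every $C^k$-norm by Proposition \ref{p:suitableb}; I would apply that proposition simultaneously to the sequence $(B\Theta_i)$, possibly enlarging the $b_i$ uniformly (the constants $C_{k,i}$ get replaced by $C_{k+2,i}$, since $B$ is second-order in $X$), so that a single sequence $b$ controls both sums as well as the vanishing of the telescoping tail as $N\to\infty$.

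Finally, both residual sums are $O(t^\infty)$ by the same mechanism. On $\supp \beta'(b_i\cdot)$ one has $t\in[1/(2b_i),1/b_i]$, so $b_i\le 1/t$, $t^i \le b_i^{-i}$, and combined with the bound $|q_t|\le C\, t^{-n/2}$ (pointwise as a half-density) and $b_i \ge 2\|\Theta_i\|_{C^0}$ from Proposition \ref{p:suitableb}, the $C^0$-norm of the $i$-th summand in the first sum is bounded by a constant times $t^{-n/2}(2t)^{i-2}$. For small $t$ only indices $i\ge i_0(t)$ with $b_i\ge 1/(2t)$ contribute, and $i_0(t)\to\infty$ as $t\to 0^+$, yielding $O(t^\infty)$. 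The second sum is handled identically using $\supp\bigl(\beta(b_i\cdot)-\beta(b_{i+1}\cdot)\bigr)\subset[1/(2b_{i+1}),1/b_i]$, and all higher $C^k$-norms are treated the same way using the $C^k$ growth of $b_i$ built into the proof of Proposition \ref{p:suitableb}. The main pitfall I anticipate is purely bookkeeping: cleanly handling the $i=0$ boundary in the telescope, arranging a single $b$ to dominate $\Theta_i$, $B\Theta_i$ and all their derivatives, and tracking that on the $\beta'$-support one trades the dangerous $b_i$ prefactor for a harmless power of $t$.
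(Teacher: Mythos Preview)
Your proof is correct, but the paper argues differently and more economically. Rather than computing $(\partial_t+\Delta^R)F^b_t$ directly and chasing the residual cutoff terms, the paper compares $F^b_t$ with the \emph{truncated} (non-Borel-summed) approximate kernel
\[
F_t^{(N)}=q_t\sum_{i=0}^{N+n/2} t^i\Theta_i\,\nu^{1/2},
\]
for which Berline--Getzler--Vergne already gives $\|(\partial_t+\Delta^R)F_t^{(N)}\|_{C^k(U)}=O(t^{N-k/2})$. Since by construction $\|F^b_t-F_t^{(N)}\|_{C^l(U)}=O(t^{N+1-l/2})$, applying the second-order operator $\partial_t+\Delta^R$ to the difference costs only two derivatives (take $l=k+2$), so $\|(\partial_t+\Delta^R)F^b_t\|_{C^k(U)}=O(t^{N-k/2})$ for every $N$, hence $O(t^\infty)$.

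What each approach buys: the paper's argument is a two-line reduction to a textbook estimate and needs no analysis of where the cutoffs $\beta(b_i t)$ are supported, no separate treatment of the $\beta'$-terms versus the $(\beta(b_it)-\beta(b_{i+1}t))$-terms, and no re-examination of the growth of the $b_i$. Your direct computation, on the other hand, makes the mechanism completely explicit (the error is literally produced by the Borel cutoffs), and would generalize more readily to situations where one wants pointwise or weighted estimates on the error rather than just $O(t^\infty)$. The bookkeeping concerns you flag --- the $i=0$ boundary, controlling $B\Theta_i$ via $\|\Theta_i\|_{C^{k+2}}\le C_{i,i}$ for $i\ge k+2$, and trading $b_i$ for $1/t$ on the support of $\beta'(b_i\cdot)$ --- are all genuine but manageable, exactly as you anticipate.
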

\begin{proof}
Let $F_t^{(N)}$ be the approximate heat kernel (cf. \cite[Chapter 2]{BerlineGetzlerVergne}) that one obtains by cutting off the sum over $i=0,1,2,...$ in \eqref{e:formal} at $i=N+n/2$ (no Borel summation). In \cite[Chapter 2]{BerlineGetzlerVergne} it is proved that the $C^k(U)$ norm of the corresponding error $(\partial_t+\Delta^R)F^{(N)}_t$ on the right-hand-side of the heat equation is $O(t^{N-k/2})$. By construction the $C^l(U)$-norm of the difference $F^b_t-F_t^{(N)}$ is $O(t^{N+1-l/2})$. Setting $l=k+2$ we deduce that the $C^k(U)$-norm of $(\partial_t+\Delta^R)(F^b_t-F_t^{(N)})$ is $O(t^{N-k/2})$. Hence the $C^k(U)$-norm of $(\partial_t+\Delta^R)F^b_t$ is also $O(t^{N-k/2})$. Since this holds for all $N$, the result follows.
\end{proof}
\begin{remark}
\label{r:Otinf}
The proof of Lemma \ref{l:Fbheat} highlights a simplifying property of the class of $O(t^\infty)$ families of smoothing kernels that we shall use again below: If $R_t \in C^\infty((0,\infty)\times U)$ and is $O(t^\infty)$, then so is the result of applying any left or right invariant differential operator to $R_t$.
\end{remark}

Fix a smooth bump function $\chi\colon [0,\infty)\rightarrow [0,1]$ such that $\chi(r)=0$ for $r\ge \kappa$ and $\chi(r)=1$ for $r\le \kappa/2$. The pullback of $\chi$ under the map $U \ni \exp^h_m(X)\mapsto |X|\in [0,\infty)$ is a compactly supported smooth function on $U$, equal to $1$ on a neighborhood of $M$; it extends smoothly by $0$ to all of $G$. We shall abuse notation slightly and denote the extension-by-$0$ of the pull back by $\chi$ as well. For any smooth section $F$ on $U$, we regard the product $\chi F$ as a smooth section on $G$ equal to $0$ outside $U$.
\begin{definition}
Let $F_t^b$ be a Borel-summation of the formal heat kernel and let $\chi$ be a bump function as above. The \emph{asymptotic heat kernel} defined by this data is
\[ K_t=\chi F_t^b \in C^\infty((0,\infty)\times G,\delta(E\otimes \Lambda^{1/2})).\]
For each $t \in (0,\infty)$, $K_t \in \Psi^{-\infty}(G)$. We also set $K_0=\delta_M$.
\end{definition}
\begin{remark}
A comparable object is used by Costello \cite{costello2011renormalization}, where $K_t$ is referred to as a `fake heat kernel'.
\end{remark}

\begin{corollary}
\label{c:AHKeqn} 
The asymptotic heat kernel $K_t$ satisfies
\[ (\partial_t+\Delta^R)K_t=O(t^\infty). \] 
Moreover for any $s \in \Psi^{-\infty}(G)$, $K_ts\rightarrow s$ as $t\rightarrow 0^+$ in the $C^k$ norm for all $k$.
\end{corollary}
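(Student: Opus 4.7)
The plan for the first claim is to use that $\chi$ does not depend on $t$, so
\[
(\partial_t + \Delta^R)(\chi F^b_t) = \chi\,(\partial_t + \Delta^R) F^b_t + [\Delta^R, \chi]\, F^b_t.
\]
By Lemma \ref{l:Fbheat} the first summand is $\chi$ times an $O(t^\infty)$ family and hence itself $O(t^\infty)$. The commutator $[\Delta^R, \chi]$ is a first-order $G$-differential operator whose coefficients are supported on the annulus $\{\kappa/2 \le |X| \le \kappa\}$; on that set the Gaussian factor $e^{-|X|^2/4t}$ is uniformly bounded by $e^{-\kappa^2/16 t}$, which decays faster than any polynomial in $t$ and dominates any negative power of $t$ or polynomial in $X$ produced by differentiation. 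Combining this with the uniform $C^k$-bounds on the remaining smooth factors supplied by Proposition \ref{p:suitableb} gives $[\Delta^R, \chi]\, F^b_t = O(t^\infty)$.

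For the approximate-identity statement, the strategy is to recognize $K_t$ as a Gaussian mollifier along source fibers. Under the correspondence $f \mapsto k^f$, $K_t$ becomes a $G$-equivariant family of smoothing operators $K_t^R|_{s^{-1}(m)}$ whose Schwartz kernel equals, in the trivialization provided by $\exp^h$, the product $\chi(|X|)(4\pi t)^{-n/2} e^{-|X|^2/4t}\,\Theta(X, m, t)$, where $\Theta(X, m, t) := \sum_{i\ge 0} \beta(b_i t) t^i \Theta_i(X, m)$ extends smoothly to $t = 0$ and satisfies $\Theta(0_m, m, 0) = \Theta_0(0_m, m) = \id$. The convolution $K_t \star s$ is then, on each $s$-fiber, the composition of this standard Gaussian mollifier (with a smooth coefficient reducing to the identity at $t=0$ and $X=0$) with the smoothing operator for $s$. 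Pointwise convergence $(K_t \star s)(g) \to s(g)$ follows from classical mollifier estimates, and compact support of $s$ together with smoothness of all the ingredients allows differentiation under the integral, giving convergence in every $C^k$-norm.

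The main technical point to monitor is propagating these two estimates (exponential decay of the commutator term in the first claim, and Gaussian mollifier convergence in the second) uniformly in every $C^k$-norm, in spite of $F^b_t$ being defined by an infinite Borel-summed series. This is already secured by Proposition \ref{p:suitableb}, which supplies uniform $C^k$-convergence of the defining series, together with the observation in Remark \ref{r:Otinf} that left and right invariant differential operators preserve the $O(t^\infty)$ class. With these ingredients in hand the remainder of the argument is routine and no genuine obstacle is anticipated.
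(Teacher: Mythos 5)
Your proof is correct and follows essentially the same approach as the paper: the first claim is reduced to Lemma \ref{l:Fbheat}, and the second to the classical Gaussian mollifier / approximate-identity property (the paper simply cites \cite[Theorem 2.29(1)]{BerlineGetzlerVergne}). You usefully make explicit a step the paper's one-line proof leaves implicit, namely the decomposition $(\partial_t+\Delta^R)(\chi F^b_t)=\chi(\partial_t+\Delta^R)F^b_t+[\Delta^R,\chi]F^b_t$ and the observation that the commutator term is $O(t^\infty)$ because its coefficients are supported on the annulus $\kappa/2\le |X|\le\kappa$ where the Gaussian decays like $e^{-\kappa^2/16t}$ and dominates all polynomial factors --- this is exactly what is needed to pass from the lemma about $F^b_t$ to the corollary about $K_t=\chi F^b_t$.
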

\begin{proof}
The first statement follows from Lemma \ref{l:Fbheat}. The second statement follows from \cite[Theorem 2.29(1)]{BerlineGetzlerVergne}.
\end{proof}

\subsection{Asymptotic heat kernel as a heat parametrix}\label{s:heatparametrix}
In this section we will again leave out the coefficient bundle $E$ to simplify notation. So far we have taken the point of view that the asymptotic heat kernel is a smoothly varying family $(K_t)_{t>0}$ of compactly supported distributions on $G$, or in a similar vein, as a smoothly varying family of multipliers of $\Psi^{-\infty}(G)$. It is sometimes convenient to take the point of view that this family defines a single distribution $K$ on $\bR \times G$, conormal to $\{0\}\times M$, and with support contained in $[0,\infty)\times G$. For example, in the case of the Euclidean heat kernel in 1 dimension, this distribution is the locally integrable function 
\[ h(t)(4\pi t)^{-1/2}\exp(-(x-y)^2/4t) \] 
on $\bR^3=\bR\times \tn{Pair}(\bR)$ with $h(t)$ the Heaviside function. 

Regarding $\bR$ as a Lie group under addition, $\bR\times G$ is a Lie groupoid. To treat the heat operator $\partial_t+\Delta$, one wants a version of pseudodifferential calculus for the groupoid $\bR\times G$ in which the operator $\partial_t$ is considered to have order $2$. This is a relatively straight-forward change. In particular one adjusts the usual definition of the space of order $q$ symbols by replacing ordinary homogeneity with weighted homogeneity for an $\bR_{>0}$-action, where the coordinate $t$ is assigned weight $2$ instead of weight $1$, see for example \cite{melrose1993atiyah, beals1984heat, van2019groupoid} for further details (in the setup of \cite{van2019groupoid}, we use the filtration of the Lie algebroid $\bR_M\oplus A\rightarrow M=\{0\}\times M$ of $\bR\times G$ where the subbundle $\bR_M=M\times \bR$ is assigned degree $2$). The result is a filtered algebra of pseudodifferential operators $\Psi^\infty_{\tn{wt}}(\bR\times G)$ associated to the `weighted' Lie groupoid $\bR\times G$, with composition given by convolution of compactly supported generalized sections on $\bR\times G$ that are conormal to the unit space $\{0\}\times M$.

In this context the heat equation together with the initial condition become the equation
\begin{equation} 
\label{e:heatreformulated}
(\partial_t+\Delta^R)H=\delta_t\delta_M \qquad \text{(in } \Psi^\infty_{\tn{wt}}(\bR\times G)\text{)},
\end{equation}
where the RHS is the identity element of $\Psi^\infty_{\tn{wt}}(\bR\times G)$. To see this, convolve both sides of \eqref{e:heatreformulated} with a smooth compactly supported half-density $s \in \Psi^{-\infty}(\bR\times G)$ and evaluate at time $t$. On the LHS we obtain (omitting `$\star$' from the notation for brevity),
\[ ((\partial_t+\Delta^R)Hs)_t=\int_0^\infty \big(-H_u\partial_u s_{t-u}+\Delta^RH_us_{t-u}\big)\d u,\]
where we integrated by parts in the first term. Using $\Delta^R H_us_{t-u}=-\partial_u(H_us_v)|_{v=t-u}$ (since $u\mapsto H_us_v$ is the solution of the heat equation with initial condition $s_v$), one finds
\[ ((\partial_t+\Delta^R)Hs)_t=-\int_0^\infty \partial_u(H_us_{t-u})\d u=H_0s_t=s_t, \]
which agrees with the result of convolving the RHS of \eqref{e:heatreformulated} with $s$ and evaluating at time $t$.

The operator $\partial_t+\Delta \in \Psi^2_{\tn{wt}}(\bR\times G)$ and is elliptic in the weighted sense by construction, hence possesses a parametrix in $\Psi^{-2}_{\tn{wt}}(\bR\times G)$. Comparing \eqref{e:heatreformulated} with Corollary \ref{c:AHKeqn}, we see that $K$ is, in particular, an inverse of $\partial_t+\Delta$ in the algebra of compactly supported $\{0\}\times M$-conormal distributions on $\bR\times G$ modulo smoothing kernels, and hence is such a parametrix. That $K$ has weighted order $-2$ may also be checked more directly. Consider the Fourier transform of $K$ in the source fiber $\bR\times s^{-1}(m)$ of a point $(0,m)$ in the unit space $\{0\}\times M$. The Fourier transform of the Euclidean heat kernel $q(t,X)=(4\pi t)^{-n/2} \exp(-|X|^2/4t)h(t)$ in $\bR^n$ is the generalized function
\begin{equation} 
\label{e:FourierEuclHeat}
\scr{F}(q)(\varsigma,\xi)=\lim_{\epsilon\rightarrow 0^+} \frac{1}{2\pi\i \varsigma+\epsilon+4\pi^2|\xi|^2}. 
\end{equation}
(The limit $\epsilon\rightarrow 0^+$ comes from the Fourier transform of the Heaviside function $h(t)$.) Although not smooth at the origin $(0,0)\in \bR^{n+1}$, \eqref{e:FourierEuclHeat} is weighted homogeneous of degree $-2$ (i.e. under $(\varsigma,\xi)\mapsto (\lambda^2\varsigma,\lambda \xi)$), and satisfies the weighted symbol estimates outside any neighborhood of the origin $(0,0) \in \bR^{n+1}$. The Fourier transform of $K$ in $\bR\times s^{-1}(m)$ is the convolution of \eqref{e:FourierEuclHeat} with the Fourier transform of the smooth compactly supported function
\[ \Theta(t,X)=\sum_{i=0}^\infty\beta(b_it)t^i\chi(X,m)\Theta_i(X,m).\]
Convolution smooths out the singularity of \eqref{e:FourierEuclHeat} at $(0,0)$ while preserving approximate degree $-2$ homogeneity at infinity, and the result satisfies the weighted symbol estimates.

In fact $\partial_t+\Delta$, $K$ belong to the subalgebra $\Psi^\infty_{\tn{wt}}([0,\infty)\times G)$ consisting of elements whose support is contained in $[0,\infty)\times G$ (the subalgebra property follows since $[0,\infty)\subset \bR$ is a submonoid in $(\bR,+)$). The intersection 
\[ \Psi^\infty_{\tn{wt}}([0,\infty)\times G)\cap \Psi^{-\infty}(\bR\times G)=\Psi^{-\infty}([0,\infty)\times G) \]
is the ideal consisting of compactly supported smooth $O(t^\infty)$ families  $R=(R_t)_{t\ge 0}$ of smoothing operators, so that the calculations we have done modulo $O(t^\infty)$ errors in the previous section can be reinterpreted as statements regarding elements of the quotient algebra. In particular Corollary \ref{c:AHKeqn} may be reformulated as follows.
\begin{corollary}
\label{c:heatparam}
The asymptotic heat kernel $K=(K_t)_{t\ge 0}$ descends to an inverse of $(\partial_t+\Delta)$ in the quotient algebra $\Psi^\infty_{\tn{wt}}([0,\infty)\times G)/\Psi^{-\infty}([0,\infty)\times G)$.
\end{corollary}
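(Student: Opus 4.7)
The plan is to promote the family $(K_t)_{t\ge 0}$ to a single distribution $K$ on $\bR\times G$ supported in $[0,\infty)\times G$ and conormal to $\{0\}\times M$, and then establish two things: (i) $K \in \Psi^{-2}_{\tn{wt}}([0,\infty)\times G)$, and (ii) $(\partial_t+\Delta)\star K=\delta_t\delta_M$ modulo $\Psi^{-\infty}([0,\infty)\times G)$. For (i) I would localize near an arbitrary unit $(0,m)\in \{0\}\times M$; because of the cutoff $\chi$ the kernel $K$ has compact support in $[0,\infty)\times U$, so conormality need only be checked in the source fiber $\bR\times s^{-1}(m)$. There $K$ factors as $q\cdot \Theta$ where $q$ is the Euclidean heat half-density (times the Heaviside function) and $\Theta$ is a smooth compactly supported function coming from the Borel-summed series. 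The Fourier transform $\widehat{q}$ is the distribution displayed in \eqref{e:FourierEuclHeat}, which is weighted-homogeneous of degree $-2$ and satisfies the weighted symbol estimates away from the origin; convolving with $\widehat{\Theta}\in \mathcal{S}$ smooths the singularity at $(0,0)$ and preserves the weighted symbol decay at infinity, yielding a classical weighted symbol of order $-2$.

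For (ii), convolution with $\partial_t+\Delta$ from the left corresponds to applying the right-invariant operator $\partial_t+\Delta^R$ to $K$ as a distribution on $\bR\times G$. Since $K_t=0$ for $t<0$ and $K_t\to \delta_M$ in the multiplier sense as $t\to 0^+$ by the second statement of Corollary \ref{c:AHKeqn}, the distributional $t$-derivative of $K$ picks up a boundary jump, and one has
\[
(\partial_t+\Delta^R)K=\delta_t\delta_M+\bigl((\partial_t+\Delta^R)K_t\bigr)_{t>0},
\]
where the second term on the right denotes the pointwise derivative on $t>0$ extended by zero to $t\le 0$. By the first statement of Corollary \ref{c:AHKeqn} this pointwise derivative is $O(t^\infty)$, hence the extension-by-zero is a compactly supported smooth $O(t^\infty)$ family of smoothing sections on $[0,\infty)\times G$, i.e.\ an element of $\Psi^{-\infty}([0,\infty)\times G)$. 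This is precisely the equation $(\partial_t+\Delta)\star K=\delta_t\delta_M$ in the quotient algebra.

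Steps (i) and (ii) together exhibit $K$ as a left inverse of $\partial_t+\Delta$ modulo $\Psi^{-\infty}([0,\infty)\times G)$. Since $\partial_t+\Delta$ is elliptic in the weighted calculus, every left parametrix is automatically a two-sided parametrix modulo smoothing (any right parametrix $K'$ differs from $K$ by a smoothing kernel, as $K-K'=K\star(\partial_t+\Delta)\star K'-K\star(\partial_t+\Delta)\star K'$ mod smoothing); hence $K$ descends to a genuine inverse, as claimed. The main obstacle in this plan is the precise verification of the weighted symbol condition in step (i); the Fourier argument is sketched in the paragraph above the corollary, but turning the qualitative picture into uniform weighted symbol estimates at infinity requires a small amount of bookkeeping on the convolution of \eqref{e:FourierEuclHeat} with $\widehat{\Theta}$, in particular using that $\widehat{\Theta}$ is Schwartz jointly in the dual variables to $t$ and to the fiber coordinate. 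Steps (ii) and the elliptic upgrade to a two-sided inverse are then essentially formal consequences of Corollary \ref{c:AHKeqn} and the boundary jump at $t=0$.
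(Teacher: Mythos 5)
Your overall strategy is sound, and it does differ from the paper's in one step. The paper's proof of Corollary \ref{c:heatparam} dispatches the right-inverse part exactly as you do (Corollary \ref{c:AHKeqn} plus equation \eqref{e:heatreformulated} via \eqref{e:convdiff}), but then produces a \emph{left} inverse directly by using the involution family $(K_t^\star)_{t\ge 0}$, appealing to $K_t-K_t^\star=O(t^\infty)$ to conclude that $K$ is two-sided. You instead invoke ellipticity of $\partial_t+\Delta$ in $\Psi^\infty_{\tn{wt}}$ and the standard ring-lemma argument (a one-sided inverse must agree with any two-sided parametrix, hence is itself two-sided). Both are valid; the paper's route is more self-contained within the convolution picture, while yours leans on the weighted pseudodifferential calculus that was already set up in the preceding paragraph, so neither is ``heavier'' than the other. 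One point worth making explicit in your route: the weighted symbolic parametrix is a priori only an element of $\Psi^{-2}_{\tn{wt}}(\bR\times G)$, not obviously supported in $[0,\infty)\times G$; to conclude invertibility in the subquotient $\Psi^\infty_{\tn{wt}}([0,\infty)\times G)/\Psi^{-\infty}([0,\infty)\times G)$ you should either observe that the natural map into $\Psi^\infty_{\tn{wt}}(\bR\times G)/\Psi^{-\infty}(\bR\times G)$ is injective (so the identity $K\star(\partial_t+\Delta)=1$ there descends), or observe that the symbolic parametrix can be built supported in $[0,\infty)\times G$ because the leading Fourier inverse \eqref{e:FourierEuclHeat} is causal and $[0,\infty)$ is a submonoid.

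Two smaller corrections. First, from $(\partial_t+\Delta)\star K=1$ mod smoothing you conclude that $K$ is a \emph{right} inverse, not a left inverse as you wrote. Second, the parenthetical identity
\[
K-K' = K\star(\partial_t+\Delta)\star K'-K\star(\partial_t+\Delta)\star K'
\]
is the difference of an expression with itself and therefore vacuous. What you intend is the standard manipulation: if $L$ is a two-sided parametrix and $(\partial_t+\Delta)\star K=1$ mod smoothing, then $K=(L\star(\partial_t+\Delta))\star K=L\star((\partial_t+\Delta)\star K)=L$ mod smoothing, so $K$ inherits the left-inverse property from $L$. Finally, your step (i) (verifying that $K$ has weighted order $-2$) is not needed for the corollary itself; the paper establishes that point in the discussion immediately preceding the corollary, and the corollary's proof takes it as given. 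Including it does no harm but makes the argument look longer than it is.
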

\begin{proof}
In light of equations \eqref{e:heatreformulated} and \eqref{e:convdiff}, Corollary \ref{c:AHKeqn} says that $K$ descends to a right inverse of $(\partial_t+\Delta)$ in the quotient. The family $(K_t^\star)_{t\ge 0}$ similarly descends to a left inverse. Thus $(\partial_t+\Delta)$ is invertible in the quotient algebra, and $K$ descends to a 2-sided inverse.
\end{proof}
In this sense the asymptotic heat kernel defines a parametrix for the heat operator $(\partial_t+\Delta)$. We remark that it is also possible to work with a larger weighted Lie groupoid $\tn{Pair}(\bR)\times G$, and above we have taken advantage of the translation-invariance of $\partial_t$ to replace $\tn{Pair}(\bR)$ with $\bR$. We refer the reader to \cite{melrose1993atiyah, ponge2003new, beals1984heat} for further discussion of heat parametrices.

\begin{corollary}
\label{c:AHKprop}
The asymptotic heat kernel $K_t$ has the following properties:
\begin{enumerate}
\item $K_t-K_t^*=O(t^\infty)$;
\item the commutator (in $\Psi^\infty(G)$) $[D,K_t]=O(t^\infty)$;
\item the convolution square (in $\Psi^{-\infty}(G)$) $K_t^2=K_t\star K_t=K_{2t}+O(t^\infty)$.
\end{enumerate}
\end{corollary}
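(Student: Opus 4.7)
The plan is to prove (b) first and then deduce (a) and (c) from it. The main tool is Corollary \ref{c:heatparam}, which says $K$ descends to the two-sided inverse of $\partial_t+\Delta$ in the quotient algebra $\Psi^\infty_{\tn{wt}}([0,\infty)\times G)/\Psi^{-\infty}([0,\infty)\times G)$.

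For (b), observe that $D$ commutes with $\Delta=D^2$ and with $\partial_t$, hence with $\partial_t+\Delta$. Inverses in a unital algebra are unique and two-sided, so $D$ must also commute with $K$ in the quotient:
\[ KD \equiv KD(\partial_t+\Delta)K=K(\partial_t+\Delta)DK\equiv DK\pmod{\Psi^{-\infty}}, \]
where the outer $\equiv$ use $(\partial_t+\Delta)K\equiv K(\partial_t+\Delta)\equiv 1$ and the middle equality uses $[D,\partial_t+\Delta]=0$. This says $[D,K_t]\in\Psi^{-\infty}([0,\infty)\times G)$, i.e.\ $[D,K_t]=O(t^\infty)$.

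For (a), (b) gives $[\Delta,K_t]=D[D,K_t]+[D,K_t]D=O(t^\infty)$. Taking family-wise adjoints and using $(f\star g)^*=g^*\star f^*$ together with $\Delta^*=\Delta$ (self-adjointness of $\Delta^R$ along each source fiber), one obtains $[\Delta,K_t^*]=O(t^\infty)$ as well. Then
\[ \partial_t K_t^*=(\partial_t K_t)^*=-(\Delta\star K_t)^*+O(t^\infty)=-K_t^*\star\Delta+O(t^\infty)=-\Delta\star K_t^*+O(t^\infty), \]
so $(\partial_t+\Delta^R)K_t^*=O(t^\infty)$. Combined with $K_0^*=\delta_M^*=\delta_M$, this makes $K^*$ another right inverse of $\partial_t+\Delta$ in the quotient; by uniqueness of the two-sided inverse, $K^*\equiv K$ modulo $\Psi^{-\infty}$, which is (a).

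For (c), I would use a Duhamel-style interpolation. Set $\varphi(u)=K_{2t-u}\star K_u$ for $u\in[0,t]$, so that $\varphi(0)=K_{2t}\star\delta_M=K_{2t}$ and $\varphi(t)=K_t^2$. Substituting $\partial_v K_v=-\Delta^R K_v+O(v^\infty)$ into both factors yields
\[ \varphi'(u)=[\Delta^R,K_{2t-u}]\star K_u+O((2t-u)^\infty)+O(u^\infty), \]
the last two remainders absorbing the uniformly bounded convolutions. By (b) the commutator is $O((2t-u)^\infty)$; integrating from $0$ to $t$ yields $K_t^2-K_{2t}=O(t^\infty)$. The main technical obstacle throughout is to confirm that convolutions of elements of $\Psi^\infty_{\tn{wt}}([0,\infty)\times G)$ with elements of $\Psi^{-\infty}([0,\infty)\times G)$ preserve the $O(t^\infty)$ property; this is standard in the heat parametrix calculus but needs to be checked to make the quotient-algebra reasoning in (a) and (b) rigorous.
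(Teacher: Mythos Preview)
Your argument for (b) is exactly the paper's: $D$ commutes with $\partial_t+\Delta$, hence with its inverse in the quotient.  For (a) your route through (b) works, but the paper is more direct: taking adjoints of $(\partial_t+\Delta)\star K=1+O(t^\infty)$ immediately gives $K^*\star(\partial_t+\Delta)=1+O(t^\infty)$, so $K^*$ is a \emph{left} inverse in the quotient and equals the two-sided inverse $K$ by uniqueness---no need to pass through $[\Delta,K_t^*]$.  The real divergence is in (c).  The paper avoids Duhamel entirely by observing that \emph{both} $K_t\star K_t$ and $K_{2t}$ descend to inverses of $\partial_t+2\Delta$ in the quotient algebra: for $K_{2t}$ this is a reparametrization, and for $K_t^2$ one computes $-\partial_t(K_t\star K_t)=\Delta^R K_t\star K_t+K_t\star\Delta\star K_t$, then uses (b) to commute $\Delta$ past the left $K_t$ and obtain $2\Delta^R(K_t\star K_t)+O(t^\infty)$.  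Uniqueness of inverses finishes.  This stays entirely inside the ideal/quotient formalism, so the ``technical obstacle'' you flag---that $\Psi^{-\infty}([0,\infty)\times G)$ is a two-sided ideal---is all that is needed, whereas your Duhamel interpolation requires pointwise-in-$u$ estimates on convolutions such as $K_{2t-u}\star O(u^\infty)$ (controlling the possible blow-up of $K_v$ as $v\to 0$ against the vanishing of the error), which can be done but is genuinely more work.
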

\begin{proof}
Property (a) was explained in the proof of Corollary \ref{c:heatparam}. Properties (b), (c) follow upon passing to the quotient algebra $\Psi^\infty_{\tn{wt}}([0,\infty)\times G)/\Psi^{-\infty}([0,\infty)\times G)$. For (b) use the fact that if an element $d$ commutes with an invertible element $k$ then $[d,k^{-1}]=k^{-1}[k,d]k^{-1}=0$. For (c) use the fact that $K_t^2$, $K_{2t}$ both descend to the inverse of $(\partial_t+2\Delta)$. Indeed using \eqref{e:convdiff},
\begin{align*} 
-\partial_t(K_t\star K_t)&=\Delta^R K_t\star K_t+K_t\star (\Delta^RK_t)\\
&=\Delta^R K_t\star K_t+K_t\star \Delta \star K_t\\
&=\Delta^R K_t\star K_t+\Delta\star K_t \star K_t+O(t^\infty)\\
&=2\Delta^R K_t\star K_t+O(t^\infty),
\end{align*}
where in the third line we used part (b).
\end{proof}

\subsection{Parametrices and the index}
Let $K_t=\chi F_t^b$ be an asymptotic heat kernel for $\Delta=D^2$. For each $t>0$ define
\begin{equation} 
\label{e:Pt}
P_t=D\int_0^t K_u \d u. 
\end{equation}
If $s \in \Psi^{-\infty}(G)$ then by Corollary \ref{c:AHKeqn}, $u\mapsto K_us$ is continuous. From this it follows that the integral \eqref{e:Pt} converges in the space of compactly supported $M$-conormal generalized sections on $G$.

\begin{proposition}
\label{p:heatparametrix}
For all $t>0$, $P_t \in \Psi^{-1}(G,E)$ is a parametrix for $D$. Moreover
\[ DP_t=1-K_t+O(t^\infty), \qquad P_tD=1-K_t+O(t^\infty).\]
\end{proposition}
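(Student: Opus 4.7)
The plan is to run the standard Duhamel-style manipulation, relying on two inputs already established for the asymptotic heat kernel: Corollary \ref{c:AHKeqn}, that $(\partial_u + \Delta^R) K_u = O(u^\infty)$, and Corollary \ref{c:AHKprop}(b), that $[D, K_u] = O(u^\infty)$ in the convolution algebra. Throughout, I will treat $D$ and $\Delta$ as elements of $\Psi^\infty(G, E)$ via \eqref{e:convdiff}, so that convolution with $D$ corresponds to acting by the right-invariant family $D^R$; and I will freely interchange convolution with the integral $\int_0^t (\cdot)\, du$, which is legitimate because $u \mapsto K_u$ is smooth on $(0,\infty)$ with values in $M$-conormal distributions having good limiting behavior at $u = 0^+$.

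First I would establish $DP_t = 1 - K_t + O(t^\infty)$. By associativity,
\[
DP_t \;=\; D \star D \star \int_0^t K_u\, du \;=\; \int_0^t \Delta^R K_u\, du.
\]
Corollary \ref{c:AHKeqn} replaces $\Delta^R K_u$ by $-\partial_u K_u + O(u^\infty)$, and the fundamental theorem of calculus yields
\[
DP_t \;=\; -(K_t - \lim_{u \to 0^+} K_u) + O(t^\infty) \;=\; 1 - K_t + O(t^\infty),
\]
where the limit is $K_0 = \delta_M$ (the identity element of $\Psi^\infty(G, E)$) by the second assertion of Corollary \ref{c:AHKeqn}.

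Next, for $P_t D$, I would push the right-most $D$ past each $K_u$ in the integrand using Corollary \ref{c:AHKprop}(b): $K_u \star D = D \star K_u - [D, K_u] = D \star K_u + O(u^\infty)$. Then
\[
P_t D \;=\; D \star \int_0^t K_u \star D\, du \;=\; D \star \int_0^t D \star K_u\, du \;+\; D \star \int_0^t O(u^\infty)\, du.
\]
By Remark \ref{r:Otinf}, applying a right-invariant differential operator preserves the $O(u^\infty)$ class, so the second term is $O(t^\infty)$, and the first term is exactly $DP_t$. This gives $P_t D = 1 - K_t + O(t^\infty)$, and since $K_t \in \Psi^{-\infty}(G, E)$ for each $t > 0$, $P_t$ is a parametrix for $D$.

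The remaining point is the order statement $P_t \in \Psi^{-1}(G, E)$, which is where I expect the main technical obstacle. Working locally on $U$ via the splitting $K_u = \chi q_u \sum_i \beta(b_i u) u^i \Theta_i$, the leading model for $\int_0^t K_u\, du$ as $t$ stays positive is the truncated Euclidean Green-type kernel $\int_0^t q_u(X) du$, which is $M$-conormal with principal behaviour $|X|^{2-n}$ near $M$ (in particular of classical order $-2$), the higher terms in the $\Theta_i$-series contributing strictly smoother pieces. Hence $\int_0^t K_u\, du \in \Psi^{-2}(G)$, and $P_t = D \star \int_0^t K_u\, du \in \Psi^{-1}(G, E)$. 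Alternatively, one can deduce this directly from the weighted pseudodifferential calculus of Section \ref{s:heatparametrix}: $K$ has weighted order $-2$ in $\Psi^\infty_{\tn{wt}}([0,\infty) \times G)$, integration in the degree-2 coordinate $u$ up to a fixed $t > 0$ produces a distribution on $G$ of ordinary order $-2$, and composition with $D \in \Psi^1(G, E)$ yields order $-1$. Either way, the argument reduces to inspection of the Gaussian factor $q_u$, so it is the one step of the proof where the explicit local form of $K_u$ really enters, whereas the parametrix identities themselves are purely algebraic consequences of the heat equation and approximate commutativity.
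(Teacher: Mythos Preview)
Your derivation of the two identities $DP_t=1-K_t+O(t^\infty)$ and $P_tD=1-K_t+O(t^\infty)$ is essentially the same as the paper's: both use Corollary~\ref{c:AHKeqn} plus the fundamental theorem of calculus for the first identity, and the $O(u^\infty)$ commutator from Corollary~\ref{c:AHKprop}(b) for the second (the paper writes $P_tD=DP_t-[D,P_t]$ and observes $[D,P_t]=D\int_0^t[D,K_u]\,du$, which is your manipulation repackaged).

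Where you diverge is in establishing $P_t\in\Psi^{-1}(G,E)$. You propose a direct local analysis of the integrated Gaussian $\int_0^t q_u(X)\,du$ (or a detour through the weighted calculus of Section~\ref{s:heatparametrix}). Both are correct and give the order by explicit inspection. The paper instead uses a softer argument that avoids any local computation: having already shown that $P_t$ is an inverse of $D$ in the algebra of compactly supported $M$-conormal distributions modulo smoothing operators, it invokes uniqueness of inverses in that quotient. Since $D\in\Psi^1(G,E)$ is elliptic and hence already has an inverse in $\Psi^{-1}(G,E)$ modulo smoothing by the groupoid $\Psi$DO calculus, $P_t$ must agree with that parametrix modulo $\Psi^{-\infty}$, forcing $P_t\in\Psi^{-1}(G,E)$. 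Your approach buys a self-contained identification of the leading singularity; the paper's buys brevity and dispenses entirely with the explicit form of $K_u$ at this step.
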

\begin{proof}
Using Corollary \ref{c:AHKeqn} and the fundamental theorem of calculus,
\begin{align} 
\label{e:htparam1}
DP_t&=\int_0^t \Delta^R K_u \d u \nonumber \\
&=\int_0^t(-\partial_u K_u+O(u^\infty))\d u \nonumber \\
&=1-K_t+O(t^\infty).
\end{align}
Since
\[ P_tD=DP_t-[D,P_t], \]
we compute the commutator:
\[ [D,P_t]=D\int_0^t [D,K_u] \d u.\]
The integrand is $O(u^\infty)$ by Corollary \ref{c:AHKprop}. As in the first part of the proof we conclude $[D,P_t]$ is $O(t^\infty)$, and hence
\begin{equation} 
\label{e:htparam2}
P_tD=1-K_t+O(t^\infty).
\end{equation}
Equations \eqref{e:htparam1}, \eqref{e:htparam2} show in particular that $P_t$ descends to an inverse of $D$ in the algebra of $M$-conormal distributions modulo smoothing kernels, and hence by uniqueness of inverses, $P_t$ is pseudodifferential of order $-1$.
\end{proof}
Define $Q_t,R_t \in \Psi^{-\infty}(G,E)$ by
\[ P_tD=1-Q_t, \qquad DP_t=1-R_t.\]
Then according to Proposition \ref{p:heatparametrix},
\begin{equation} 
\label{e:QReqK}
Q_t=K_t+O(t^\infty), \qquad R_t=K_t+O(t^\infty).
\end{equation}
The next result is the main result of this section; it is a McKean-Singer-type formula for the equivariant trace pairing.
\begin{theorem}
\label{t:t0term}
The equivariant trace pairing $\pair{\tau}{\tn{ind}(D)}(\gamma)$ equals the constant term in the asymptotic expansion of $\tau^\gamma_s(K_t)$ as $t\rightarrow 0^+$.
\end{theorem}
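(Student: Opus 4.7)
The plan is to evaluate the parametrix-independent formula \eqref{e:gradedtauind} for $\pair{\tau}{\tn{ind}(D)}(\gamma)$ on the specific parametrix $P_t$ built from the asymptotic heat kernel in Proposition \ref{p:heatparametrix}, for each fixed $t>0$, and then exploit the approximate multiplicativity of $K_t$. Since $\Delta=D^2$ is even, $K_t$ preserves the $\bZ/2$-grading, and the parametrix relations $PD = 1-Q$, $DP=1-R$ imply $Q_t, R_t$ are block-diagonal; write $Q_t^\pm$, $R_t^\pm$ for their blocks. Then \eqref{e:gradedtauind} reads
\[ \pair{\tau}{\tn{ind}(D)}(\gamma) = \tau^\gamma(Q_t^{+,2}) - \tau^\gamma(R_t^{-,2}), \]
with the LHS independent of $t$.

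The key step is to replace $Q_t^{+,2}$ and $R_t^{-,2}$ by $K_{2t}^+$, $K_{2t}^-$ respectively, modulo $O(t^\infty)$. By \eqref{e:QReqK}, $Q_t = K_t + E_t$ with $E_t$ an $O(t^\infty)$ smooth family of smoothing operators, and similarly for $R_t$. Squaring gives $Q_t^2 = K_t^2 + K_t\star E_t + E_t \star K_t + E_t^2$, and each cross term is the convolution of a kernel singular only as a polynomial in $t^{-1}$ in every fixed $C^k$-norm (because of the explicit Gaussian factor $q_t$ in \eqref{e:formal}, whose $L^1$-norm along the source fibers is $O(1)$) with one vanishing faster than any polynomial. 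Hence all cross terms are themselves $O(t^\infty)$, in line with Remark \ref{r:Otinf}, so $Q_t^2 = K_t^2 + O(t^\infty)$. Corollary \ref{c:AHKprop}(c) then upgrades this to $Q_t^{+,2} = K_{2t}^+ + O(t^\infty)$ and $R_t^{-,2} = K_{2t}^- + O(t^\infty)$.

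Since $\tau$ is continuous in some $C^k$-norm on $\Gamma(\Lambda)$ and $O(t^\infty)$ families vanish in every $C^k$-norm, applying $\tau^\gamma$ yields
\[ \pair{\tau}{\tn{ind}(D)}(\gamma) = \tau^\gamma(K_{2t}^+) - \tau^\gamma(K_{2t}^-) + O(t^\infty) = \tau_s^\gamma(K_{2t}) + O(t^\infty). \]
Rescaling $t$ gives $\tau_s^\gamma(K_t) - \pair{\tau}{\tn{ind}(D)}(\gamma) = O(t^\infty)$, identifying the constant term of the asymptotic expansion of $\tau_s^\gamma(K_t)$ with $\pair{\tau}{\tn{ind}(D)}(\gamma)$ (and, as a bonus, expressing the McKean-Singer phenomenon that all other coefficients in the expansion are invisible modulo $O(t^\infty)$).

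The main point requiring care, though routine, is the convolution-stability of the $O(t^\infty)$ class: this is not automatic because $K_t$ itself blows up as $t\to 0^+$, but the competition between the polynomial singularity of $K_t$ and the super-polynomial decay of the errors resolves it, using that the domain of integration (coming from the compactness of $M$ and the cutoff $\chi$) is uniformly compact. Once this bookkeeping is in place the theorem follows purely from the identities $Q_t,R_t = K_t + O(t^\infty)$ and $K_t^{\star 2} = K_{2t}+O(t^\infty)$, with no further geometric input.
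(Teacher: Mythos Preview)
Your proof is correct and follows essentially the same route as the paper: start from \eqref{e:gradedtauind}, use \eqref{e:QReqK} to replace $Q_t^2,R_t^2$ by $K_t^2$ modulo $O(t^\infty)$, then Corollary \ref{c:AHKprop}(c) to replace $K_t^2$ by $K_{2t}$, and conclude by $t$-independence of the left hand side. The one place where you supply extra detail---the stability of the $O(t^\infty)$ class under convolution with $K_t$---is handled in the paper implicitly via the quotient algebra $\Psi^\infty_{\tn{wt}}([0,\infty)\times G)/\Psi^{-\infty}([0,\infty)\times G)$ of Section \ref{s:heatparametrix}, in which $\Psi^{-\infty}([0,\infty)\times G)$ (the $O(t^\infty)$ families) is a two-sided ideal by construction; your direct $L^1$-estimate on the Gaussian is a fine alternative, though your citation of Remark \ref{r:Otinf} is slightly off since that remark concerns differential operators rather than convolution.
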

\begin{proof}
Using equation \eqref{e:gradedtauind}, and then the $O(t^\infty)$ computations from equation \eqref{e:QReqK} and Corollary \ref{c:AHKprop}, we have
\begin{align*} 
\pair{\tau}{\tn{ind}(D)}(\gamma)&=\tau^\gamma(Q_t^{+,2})-\tau^\gamma(R_t^{-,2})\\
&=\tau^\gamma(K_t^{+,2})-\tau^\gamma(K_t^{-,2})+O(t^\infty)\\
&=\tau^\gamma(K_{2t}^+)-\tau^\gamma(K_{2t}^-)+O(t^\infty)\\
&=\tau^\gamma_s(K_{2t})+O(t^\infty).
\end{align*}
The LHS of this equation is independent of $t$. By uniqueness of asymptotic expansions the LHS equals the constant term in the asymptotic expansion of the RHS.
\end{proof}

Note that $\tau^\gamma_s(K_t)=\tau_s(K_t^\gamma)$ where we define $K_t^\gamma \in \Psi^{-\infty}(G,E)$ by
\begin{equation}
\label{e:Ktgamma}
K_t^\gamma(g)=\gamma^E K_t(\gamma^{-1}g).
\end{equation}
\begin{remark}
\label{r:suppKtgamma}
Since $K_t$ has support contained in a small neighborhood of $M$ in $G$, $K_t^\gamma$ has support contained in a small neighborhood of the submanifold $\gamma$ of $G$. In particular if $g=m \in M$ then $K_t^\gamma(m)$ vanishes unless $\gamma^{-1}m \in \supp(\chi)$. Thus by choosing $\chi$ with support sufficiently close to $M$, one can arrange that $K_t^\gamma|_M$ vanishes outside a small tubular neighborhood of $M^\gamma$.
\end{remark}

\section{Getzler rescaling and deformation to the normal cone}\label{s:rescaling}
In this section we describe a version of Getzler rescaling suited to the fixed point calculations in the next section. The discussion takes place in a more general setting, involving a metrised Lie algebroid $(B,\rho \colon B \rightarrow TP,h)$, a $\Cl(B^*)$-module $W$, and an embedded submanifold $j\colon Q \hookrightarrow P$ such that $TQ$ is transverse to $\rho$. To orient the reader, the relevant special case for the $\gamma$-fixed point calculations in the next section is
\[ P=G, \quad Q=M^\gamma, \quad B=\delta A, \quad W=\delta E.\]
We formulate Getzler rescaling in terms of a vector bundle $\sbW\rightarrow \sbN_PQ$ over the deformation to the normal cone construction for $j\colon Q\hookrightarrow P$, analogous to constructions in \cite{higsonyi, sadeghbraverman2022}. A variant also appears in \cite{melrose1993atiyah}.

\subsection{Deformation to the normal cone}
Let $j\colon Q\hookrightarrow P$ be an embedded submanifold. There is a smooth manifold $\sbN_PQ$ equipped with a submersion $u \colon \sbN_PQ\rightarrow \bR$ that interpolates between $P$ and the normal bundle $\dN_PQ$ to $Q$, in the sense that
\[ u^{-1}(c)=\begin{cases} P & \text{ if } c \ne 0,\\ \dN_PQ & \text{ if }c=0.\end{cases}\]
This is the \emph{deformation to the normal cone} construction. For $f \in C^\infty(P)$ let $o^{van}(f)$ be the order of vanishing of $f$ along $Q$. We shall make use of the description of $\sbN_PQ$ as the character spectrum of the \emph{Rees algebra}:
\[ \scr{A}(\sbN_PQ)=\bigg\{\sum u^{-k}f_k \in C^\infty(P)[u,u^{-1}]\, \bigg| \, o^{van}(f_k)\ge k\bigg\}.\]
See for example \cite{HigsonEulerLike, higsonyi} for a treatment in the smooth category adapted to our purposes. The special fiber $\dN_PQ=u^{-1}(0)$ is recovered algebraically as the character spectrum of the quotient
\[ \scr{A}(\dN_PQ)=\scr{A}(\sbN_PQ)/u\cdot\scr{A}(\sbN_PQ).\]
By construction $\scr{A}(\dN_PQ)$ is the associated graded algebra for the filtration of $C^\infty(P)$ by vanishing order $o^{van}$, and $\scr{A}(\dN_PQ)$ may be identified with the subalgebra of $C^\infty(\dN_PQ)$ consisting of functions that are polynomial along the fibers of the projection $p\colon \dN_PQ\rightarrow Q$.

Following \cite{higsonyi}, for later comparison, it is convenient to reformulate vanishing order $o^Q$ in terms of differential operators. Define a valuation
\[ o^{Q,0}(f)=\begin{cases} -\infty & \text{ if } f|_Q=0\\ 0 & \text{ else.} \end{cases}\]
For any differential operator $T$ on $P$, let $o(T)$ be the order of $T$. Then
\begin{equation} 
\label{e:vanishingorder}
o^{van}(f)=\inf_T\big\{o(T)-o^{Q,0}(Tf)\big\},
\end{equation}
where the infimum is taken over all differential operators on $P$. One obtains the same result in \eqref{e:vanishingorder} if the infimum is instead taken over the algebra of differential operators generated by any collection of vector fields whose images span the normal bundle $\dN_PQ$. 

\subsection{Schwartz functions}\label{s:schwartz}
We give a brief introduction to Schwartz functions on the deformation to the normal cone $\sbN_PQ$. Other approaches to Schwartz functions can be found in \cite{rouse2008schwartz, ewert2021, mohsen2021derivations, debord2014adiabatic}. This will not become relevant until Section \ref{s:rescaledtrace}, although we include it here because it is part of the general theory of the deformation to the normal cone construction. For simplicity we consider the case of an embedded submanifold $j\colon Q\hookrightarrow P$ of a compact manifold (the relevant case in Section \ref{s:rescaledtrace} will be $P=M$, $Q=M^\gamma$).

Our definition will be modelled on the following description of Schwartz functions on the total space of a vector bundle $V\rightarrow Q$ over a compact base. Let $\scr{A}(V)$ be the algebra generated by smooth functions on $V$ that are homogeneous of some degree for the action of $\bR_+$ by scalar multiplication. Let $\scr{D}(V)$ be the algebra generated by $\scr{A}(V)$ together with all smooth vector fields that are homogeneous of some degree for the $\bR_+$ action. A smooth function $f \in C^\infty(V)$ is of rapid decay (resp. Schwartz) if $af$ is bounded for all $a \in \scr{A}(V)$ (resp. $\D f$ is bounded for all $\D \in \scr{D}(V)$).

The deformation space $\sbN_PQ$ carries an action of $\bR_+$ called the \emph{zoom action}. It corresponds to the automorphism of the Rees algebra induced by $u\mapsto \lambda^{-1}u$, $\lambda \in \bR_+$. On the subset $P\times \bR^\times \subset \sbN_PQ$, the zoom action is $\lambda\cdot (p,u)=(p,\lambda^{-1}u)$, $\lambda \in \bR_+$. On the complementary subset $\dN_PQ\times \{0\}\subset \sbN_PQ$, the zoom action is scalar multiplication by $\lambda \in \bR_+$.

The Rees algebra $\scr{A}(\sbN_PQ)$ is the algebra of smooth functions generated by smooth functions that are homogeneous of some degree for the $\bR_+$ action; indeed the restriction of a homogeneous function to $P\times \bR^\times$ must be of the form $u^{-k}f_k$ for some $f_k \in C^\infty(P)$ and $k \in \bZ$, and then the fact that $u^{-k}f_k$ extends smoothly to $u^{-1}(0)$ implies that $o^{van}(f_k)\ge k$. Define $\scr{D}(\sbN_PQ)$ to be the algebra generated by $\scr{A}(\sbN_PQ)$ together with all smooth vector fields that are homogeneous of some degree for the zoom action. 

A function $f \in C^\infty(\sbN_PQ)$ is of \emph{rapid decay} (resp. \emph{Schwartz}) if $af$ is bounded for all $a \in \scr{A}(\sbN_PQ)$ (resp. $\D f$ is bounded for all $\D \in \scr{D}(\sbN_PQ)$). We will only be interested in the behavior of functions near $u=0$, so it is convenient to have the following variation: a function $f \in C^\infty(\sbN_PQ)$ is of \emph{rapid decay near} $u=0$ (resp. \emph{Schwartz near} $u=0$) if $af\upharpoonright u^{-1}([-1,1])$ is bounded for all $a \in \scr{A}(\sbN_PQ)$ (resp. $\D f\upharpoonright u^{-1}([-1,1])$ is bounded for all $\D \in \scr{D}(\sbN_PQ)$). Since $P$ is compact, the interval $[-1,1]$ could be replaced with any $[-r,r]$, $r>0$ without changing the definitions.

To make the Schwartz condition more transparent, we describe $\scr{A}(\sbN_PQ)$-module generators of $\scr{D}(\sbN_PQ)$. First consider the simpler case of a vector bundle $V \rightarrow Q$. Using a local trivialization of $V$, one can construct a local frame of $TV$ consisting of vector fields of degrees $-1$, $0$ (`vertical', `horizontal' vector fields respectively). Any vector field $X\in \mf{X}(V)$ can be expressed locally as a linear combination of the elements of the frame, and if $X$ is homogeneous, then the coefficients must be homogeneous. It follows that $\scr{D}(V)$ is generated as a left $\scr{A}(V)$-module by monomials in the vector fields of degrees $-1$, $0$. Since the commutator of vector fields of degrees $i$, $j$ has degree $i+j$, monomials with all degree $-1$ vector fields to the left of all degree $0$ vector fields (or vice versa) still generate. As a corollary, $f \in C^\infty(V)$ is Schwartz if and only if $X_1\cdots X_l Y_1\cdots Y_k f$ is of rapid decay for all $l,k\ge 0$ and $X_1,...,X_l \in \mf{X}(V)$ degree $-1$, $Y_1,...,Y_k \in \mf{X}(V)$ degree $0$.

Let $\mf{X}(P,Q)\subset \mf{X}(P)$ denote the Lie subalgebra of smooth vector fields tangent to $Q$. If $Y \in \mf{X}(P,Q)$ then $Y$ preserves order of vanishing of functions on $Q$ and hence acts naturally on the Rees algebra $\scr{A}(\sbN_PQ)$. The corresponding vector field on $\sbN_PQ$, also denoted $Y$, has degree $0$ for the zoom action. If $X\in \mf{X}(P)$ is any vector field, then $X$ decreases vanishing order of functions by at most $1$, and hence $uX$ acts naturally on $\scr{A}(\sbN_PQ)$. The corresponding vector field on $\sbN_PQ$, also denoted $uX$, has degree $-1$ for the zoom action.

More non-trivially, there is also a generator of degree $+1$. Let $\R \in \mf{X}(P,Q)$ be any Euler-like vector field along $Q$. Let $\C$ be the vector field on $\sbN_PQ$ induced by the derivation $u\partial_u$ of the Rees algebra. By either calculating in local coordinates or computing its action in the Rees algebra, one checks that
\begin{equation} 
\label{e:horizontalvf}
\T=\frac{1}{u}(\C+\R) 
\end{equation}
is a smooth vector field on $\sbN_PQ$, see \cite{HigsonEulerLike} for further discussion. The vector field $\T$ projects to the vector field $\partial_u$ on $\bR$, and is homogeneous of degree $+1$ for the zoom action.

Every point in $\sbN_PQ$ has an $\bR_+$-invariant neighborhood on which there is a local frame of $T\sbN_PQ$ whose elements are of the three types $Y \in \mf{X}(P,Q)$, $uX\in u\mf{X}(P)$, and $\T$. By the same argument as the vector bundle case, $\scr{D}(\sbN_PQ)$ is generated as a left $\scr{A}(\sbN_PQ)$-module by monomials in these vector fields. Moreover since the commutator of vector fields of degree $i$, $j$ has degree $i+j$, monomials with all degree $-1$ vector fields to the left of all degree $0$ vector fields to the left of powers of $\T$ still generate. As a corollary, $f \in C^\infty(\sbN_PQ)$ is Schwartz if and only if $(uX_1)\cdots (uX_l) Y_1\cdots Y_k \T^j f$ is of rapid decay for all $j,k,l\ge 0$ and $X_1,...,X_l \in \mf{X}(P)$, $Y_1,...,Y_k \in \mf{X}(P,Q)$.

The Euler-like vector field $\R$ determines a tubular neighborhood embedding $e\colon N \hookrightarrow \dN_PQ$, where $N$ is an open neighborhood of $Q$ in $P$ (in fact, the flow of $\T$ on $\sbN_PQ$ implements the embedding, see \cite{HigsonEulerLike}). Let
\begin{equation} 
\label{e:iota}
\iota \colon \sbN_NQ\hookrightarrow \bR\times \dN_NQ=\bR\times \dN_PQ 
\end{equation}
be the smooth embedding extending the map
\[ (u,x)\in (\bR\backslash \{0\})\times N \mapsto (u,u^{-1}e(x)) \in (\bR\backslash \{0\})\times \dN_PQ.\]
$\iota$ is a diffeomorphism onto the open subset
\begin{equation} 
\label{e:imageiota}
\{(u,v) \in \bR\times \dN_PQ\mid uv \in e(N)\}.
\end{equation}
Under the identification of $\sbN_PQ$ with \eqref{e:imageiota}, $\T$ becomes the vector field $\partial_u$. 

Recall $\pi \colon \sbN_PQ\rightarrow P$ is the canonical smooth map. For any function $f \in C^\infty(\sbN_PQ)$ whose support is contained in a set $\pi^{-1}(K) \subset \sbN_NQ$ with $K\subset N$ compact, the pushforward $\iota_* f$ of $f$ to \eqref{e:imageiota} extends smoothly by $0$ to all of $\bR \times \dN_PQ$.

\begin{lemma}
\label{l:SchwartzProperties}
Let $f \in C^\infty(\sbN_PQ)$ have support contained in $\pi^{-1}(K)\subset \sbN_NQ$ for some compact subset $K\subset N$. Then $f$ is of rapid decay (resp. Schwartz) near $u=0$ if and only if $\iota_*f\upharpoonright [-1,1]\times \dN_PQ$ is of rapid decay (resp. Schwartz).
\end{lemma}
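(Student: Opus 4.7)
The plan is to use that $\iota$ is $\bR_+$-equivariant for the zoom actions---on the source $\sbN_NQ\subset\sbN_PQ$ and on the target $\bR\times\dN_PQ$, which is canonically the deformation space $\sbN_{\dN_PQ}Q$ of the zero-section embedding with zoom action $\lambda\cdot(u,v)=(\lambda^{-1}u,\lambda v)$. Equivariance is verified on the open dense subset $\{u\ne 0\}$ by direct calculation from $\iota(u,x)=(u,u^{-1}e(x))$, and extends to $\{u=0\}$ by smoothness. Consequently $\iota^*$ preserves homogeneity degrees and induces isomorphisms
\[ \iota^*\colon \scr{A}(\bR\times \dN_PQ)|_{\iota(\sbN_NQ)}\xrightarrow{\sim} \scr{A}(\sbN_NQ), \quad \iota^*\colon \scr{D}(\bR\times\dN_PQ)|_{\iota(\sbN_NQ)}\xrightarrow{\sim} \scr{D}(\sbN_NQ), \]
with $\T$ corresponding to $\partial_u$ in particular.

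Because $\supp(f)\subset\pi^{-1}(K)$ with $K\subset N$ compact, the pushforward $\iota_*f$ is supported in the closed subset $\iota(\pi^{-1}(K))$ of $\iota(\sbN_NQ)$, hence extends smoothly by zero to $\bR\times\dN_PQ$. For the forward implication, given $a\in \scr{A}(\bR\times\dN_PQ)$ (respectively $\D\in\scr{D}(\bR\times\dN_PQ)$), the product $a\cdot\iota_*f$ (resp.\ $\D\iota_*f$) vanishes outside $\iota(\sbN_NQ)$ and there equals $\iota_*(\iota^*a\cdot f)$ (resp.\ $\iota_*((\iota^*\D)f)$). After multiplying $\iota^*a\in\scr{A}(\sbN_NQ)$ by a cutoff pulled back from $P$, which is $1$ near $K$ and vanishes outside $N$, one obtains an element of $\scr{A}(\sbN_PQ)$ having the same product with $f$; the rapid decay (resp.\ Schwartz) hypothesis on $f$ then gives the required boundedness on $u^{-1}([-1,1])$. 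The reverse implication is symmetric, using cutoffs pulled back to $\dN_PQ$ from a small neighborhood of $p(K)\subset Q$.

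The main technical point to verify carefully is that these cutoff tricks genuinely promote restrictions of generators of $\scr{A}(\sbN_NQ)$ (resp.\ $\scr{D}(\sbN_NQ)$) to honest elements of $\scr{A}(\sbN_PQ)$ (resp.\ $\scr{D}(\sbN_PQ)$), and vice versa, without altering the relevant products. This reduces to checking that multiplication by a smooth bump function supported appropriately gives a degree-$0$ element of the Rees algebra on each deformation space (via pullback along the canonical map) and hence preserves both rapid decay and Schwartz boundedness.
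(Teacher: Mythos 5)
Your forward implication is essentially the paper's argument, and the observation that $\iota$ is zoom-equivariant is a nice way to see why $\iota^*$ of a fiberwise-homogeneous polynomial on $\dN_PQ$ lands in $\scr{A}(\sbN_NQ)$ (indeed $\iota^*p=u^{-k}(p\circ e)$), after which the cutoff trick completes it to an honest element of $\scr{A}(\sbN_PQ)$. However, the claimed isomorphisms $\iota^*\colon\scr{A}(\bR\times\dN_PQ)|_{\iota(\sbN_NQ)}\xrightarrow{\sim}\scr{A}(\sbN_NQ)$ and likewise for $\scr{D}$ are not correct, and the assertion that the reverse implication is ``symmetric'' hides a genuine asymmetry. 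If $g\in C^\infty(P)$ vanishes to order $k$ on $Q$ but is not exactly homogeneous under the tubular-neighborhood dilations (e.g.\ $g$ has nonzero Taylor terms of orders $k$ and $k+1$), then the pushforward $\iota_*(u^{-k}g\cdot\pi^*\varrho)$ is \emph{not} polynomial along the fibers of $\dN_PQ$, nor is it of the form $\sum u^{-j}h_j(v)$; it is merely \emph{dominated} by a polynomial of degree $k$. So $\iota^*$ is injective but not surjective onto $\scr{A}(\sbN_NQ)$, and a direct transport of Rees-algebra generators through $\iota_*$ fails.

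The missing ingredient is exactly the Taylor-theorem step used in the paper's proof: for the converse direction one shows that $\iota_*(u^{-k}g\cdot\pi^*\varrho)$ (and similarly the pushed-forward coefficients of $(\pi^*\varrho)uX$ and $(\pi^*\varrho)Y$ for the Schwartz case) grow no faster than a polynomial of the appropriate degree along the fibers of $\dN_PQ$, uniformly for $u\in[-1,1]$. Rapid decay (resp.\ Schwartzness) of $\iota_*f$ then bounds the product even though the first factor is not itself in $\scr{A}(\bR\times\dN_PQ)$. Once you replace ``symmetric'' with this polynomial-domination argument, your proof matches the paper's. One further small caution: in your framing of $\bR\times\dN_PQ$ as $\sbN_{\dN_PQ}Q$, the Rees algebra of that deformation space controls behavior near the zero section but not growth at infinity along fibers, whereas the lemma's ``rapid decay on $[-1,1]\times\dN_PQ$'' is the vector-bundle notion (polynomial weights along fibers, uniform in $u$); it is the latter you must test against, as the paper does.
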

\begin{proof}
Throughout the argument, $u$ is constrained to lie in the interval $[-1,1]$, and we omit this from the notation. Let $f \in C^\infty(\sbN_PQ)$ have support in $\pi^{-1}(K)$. Let $\varrho \in C^\infty(P)$ be a smooth function equal to $1$ on $K$ and with support contained in $N$. Suppose first that $f$ is of rapid decay near $u=0$. Let $p \in C^\infty(\dN_PQ)$ be a function that is homogeneous of degree $k$ along the fibers of $\dN_PQ \rightarrow Q$, which we view as a function on $\bR\times \dN_PQ$ independent of $u \in \bR$. Then $(\iota^*p)(\pi^*\varrho)\in \scr{A}(\sbN_PQ)$ and $(\iota^*p)f=(\iota^*p)(\pi^*\varrho)f$ is bounded. Thus $p(\iota_*f)$ is bounded, and since this holds for any $k$ and any $p$, $\iota_*f$ is of rapid decay.

Conversely suppose $\iota_*f$ is of rapid decay. Let $g \in C^\infty(P)$ vanish to order $k$ on $Q$ so that $u^{-k}g \in \scr{A}(\sbN_PQ)$. The pushforward $\iota_*(u^{-k}g\cdot \pi^*\varrho)$ is a smooth function, which, by Taylor's theorem, grows no faster than a polynomial of degree $k$. Since $\iota_*f$ is of rapid decay, the product $\iota_*(u^{-k}g\cdot \pi^*\varrho \cdot f)=\iota_*(u^{-k}g f)$ is bounded, and hence $u^{-k}gf$ is bounded. As this holds for any $k$ and any $g$ vanishing to order $k$ on $Q$, it follows that $f$ is of rapid decay near $u=0$.

The proof of the corresponding statements in the Schwartz case is similar. Note that a vector field $X$ of degree $-1$ (resp. $Y$ of degree $0$, resp. $\partial_u$) on $\dN_PQ$ pushes forward under $\iota$ to $u(e^{-1}_*X)$ (resp. $e^{-1}_*Y$, resp. $\T$) on $N$. Multiplying by $\pi^*\varrho$ and extending by $0$ then produces suitable homogeneous vector fields on $\sbN_PQ$. For the converse direction, it suffices to note that for $X \in \mf{X}(P)$ (resp. $Y \in \mf{X}(P,Q)$) the vector field $(\pi^*\varrho)uX$ (resp. $(\pi^*\varrho)Y$) pushes forward under $\iota$ to a vector field whose coefficients grow no faster than a polynomial along the fibers of $\dN_PQ$, by an argument with Taylor's theorem similar to above.
\end{proof}

\begin{lemma}
Let $f \in C^\infty(\sbN_PQ)$ have support contained in $\pi^{-1}(V)$, where $V$ is the complement of a neighborhood of $Q$. Then $f$ is of rapid decay near $u=0$ if and only if $f(u)\rightarrow 0$ uniformly as $u\rightarrow 0$ faster than any power of $u$. Likewise $f$ is Schwartz near $u=0$ if and only if for all $k,l\ge 0$ and $Z_1,...,Z_k \in \mf{X}(P)$, $Z_1\cdots Z_l\T^kf(u)\rightarrow 0$ uniformly as $u\rightarrow 0$ faster than any power of $u$.
\end{lemma}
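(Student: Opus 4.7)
The plan is to translate both the rapid-decay and Schwartz conditions into pointwise statements using the structural descriptions of $\scr{A}(\sbN_PQ)$ and $\scr{D}(\sbN_PQ)$ developed earlier. Since $P$ is compact and $V$ is contained in the complement of a neighborhood of $Q$, the set $\supp(f)\cap u^{-1}([-1,1])$ is contained in the open subset $V\times([-1,1]\setminus\{0\})$ of $P\times \bR^\times\subset \sbN_PQ$, with $V$ compact and bounded away from $Q$. The main technical device I would use throughout is a bump function $\varphi \in C^\infty(P)$ that equals $1$ on an open neighborhood of $\pi(\supp(f)\cap u^{-1}([-1,1]))$ and vanishes on a neighborhood of $Q$; then $u^{-k}\varphi\in\scr{A}(\sbN_PQ)$ for every $k\geq 0$ (because $\varphi$ vanishes to infinite order on $Q$), and for any $Z\in\mf{X}(P)$ the product $\varphi Z$ belongs to $\mf{X}(P,Q)$ and coincides with $Z$ on $\supp(f)$.

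For the rapid-decay equivalence, plugging $a=u^{-k}\varphi$ into the definition gives $|u|^{-k}|f|$ bounded on $u^{-1}([-1,1])$, i.e.\ $\sup_{\supp f}|f(u)|=O(|u|^k)$ for every $k$. Conversely, any element of $\scr{A}(\sbN_PQ)$ restricted to $V\times([-1,1]\setminus\{0\})$ is a finite Laurent polynomial in $u$ with smooth coefficients; these coefficients are bounded on the compact set $V$, so $|a|\leq C|u|^{-K}$ for some $K$, and rapid pointwise decay of $f$ then guarantees $af$ bounded.

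For the Schwartz equivalence, in the forward direction I would, given $Z_1,\ldots,Z_l\in\mf{X}(P)$ and $k\geq 0$, consider the operator
\[ \D_K \;:=\; (u^{-K}\varphi)\cdot(\varphi Z_1)\cdots(\varphi Z_l)\cdot\T^k \;\in\; \scr{D}(\sbN_PQ), \]
which agrees on $\supp(f)$ with $u^{-K}Z_1\cdots Z_l\T^k$; the Schwartz hypothesis bounds $\D_Kf$ on $u^{-1}([-1,1])$ for every $K$, giving the claimed rapid decay of $Z_1\cdots Z_l\T^k f$. For the converse I would use the generating description of $\scr{D}(\sbN_PQ)$ as a left $\scr{A}$-module by monomials $(uX_1)\cdots(uX_{l_1})Y_1\cdots Y_{l_2}\T^k$ in the three types of homogeneous vector fields; since $u$ commutes with the functions $X_i,Y_j$, any such monomial applied to $f$ reduces to $u^{l_1}$ times a product of vector fields in $\mf{X}(P)$ followed by $\T^k$ applied to $f$, which is rapidly decaying by hypothesis. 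Multiplying by any $a\in\scr{A}$ (bounded by $C|u|^{-K}$ on the compact set $V$) preserves boundedness, proving the Schwartz condition.

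The main obstacle I expect is the mismatch between the generators of $\scr{D}(\sbN_PQ)$, whose vector-field part must be tangent to $Q$ or multiplied by $u$, and the arbitrary vector fields $Z_i\in\mf{X}(P)$ appearing in the statement. The bump function $\varphi$ resolves this: because $f$ is supported away from $Q$, localizing $Z_i$ via $\varphi$ produces a bona fide generator of $\scr{D}$ that coincides with $Z_i$ on $\supp(f)$, so the characterization involving arbitrary $Z_i\in\mf{X}(P)$ is no stronger than the one that uses only the admissible generators.
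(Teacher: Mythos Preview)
Your proposal is correct and follows essentially the same approach as the paper: both use a bump function vanishing near $Q$ (the paper calls it $\chi$, you call it $\varphi$) to manufacture the elements $u^{-k}\varphi\in\scr{A}(\sbN_PQ)$ and $\varphi Z_i\in\mf{X}(P,Q)$ needed to pass between the intrinsic definitions and the pointwise decay conditions. The paper dismisses the Schwartz case as ``similar'', whereas you spell out the argument via the generating monomials of $\scr{D}(\sbN_PQ)$, which is exactly the right way to fill in that detail.
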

\begin{proof}
Throughout the argument, $u$ is constrained to lie in the interval $[-1,1]$, and we omit this from the notation. Let $\chi$ be a smooth function equal to $1$ on $V$ and vanishing identically on a neighborhood of $Q$, so that $u^{-k}\pi^*\chi \in \scr{A}(\sbN_PQ)$ for any $k\ge 0$. If $f$ is of rapid decay near $u=0$, then by assumption $u^{-k}(\pi^*\chi)f=u^{-k}f$ is bounded for any $k\ge 0$, and consequently $f(u)\rightarrow 0$ uniformly as $u\rightarrow 0$ faster than any power of $u$. Conversely let $g \in C^\infty(P)$ vanish to order $k$ on $Q$. Then $u^{-k}g \in \scr{A}(\sbN_PQ)$ and $u^{-k}gf=g(u^{-k}f)$. Since $f(u)\rightarrow 0$ uniformly faster than any power of $u$, $u^{-k}f(u)\rightarrow 0$ as $u\rightarrow 0$, and $u^{-k}gf$ is bounded. The proof of the corresponding statements in the Schwartz case is similar.
\end{proof}
Combining the lemmas yields the following.
\begin{corollary}
Let $f \in C^\infty(\sbN_PQ)$. Let $\varrho \in C^\infty(P)$ be equal to $1$ on a neighborhood of $Q$ and have support contained in $N$. Then $f$ is of rapid decay (resp. Schwartz) near $u=0$ if and only if both of the following two conditions hold:
\begin{enumerate}[(a)]
\item $\iota_*((\pi^*\varrho)\cdot f)\upharpoonright [-1,1]\times \dN_PQ$ is of rapid decay (resp. Schwartz).
\item $(1-\pi^*\varrho)f(u)\rightarrow 0$ (resp. for all $k,l\ge 0$ and  $Z_1,...,Z_k \in \mf{X}(P)$, $Z_1\cdots Z_l \T^k(1-\pi^*\varrho)f(u)\rightarrow 0$) uniformly as $u\rightarrow 0$ faster than any power of $u$.
\end{enumerate}
\end{corollary}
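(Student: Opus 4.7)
The plan is to decompose $f = f_1 + f_2$ with $f_1 := (\pi^*\varrho)f$ and $f_2 := (1-\pi^*\varrho)f$, and feed $f_1$, $f_2$ into the two preceding lemmas respectively. Since $P$ is compact, $K := \supp \varrho$ is a compact subset of $N$, and $\supp f_1 \subset \pi^{-1}(K) \subset \sbN_NQ$, placing $f_1$ in the setting of the first lemma. Since $\varrho \equiv 1$ on a neighborhood of $Q$, $\supp(1 - \pi^*\varrho)$ is contained in $\pi^{-1}(V)$ for $V$ the complement of a neighborhood of $Q$, placing $f_2$ in the setting of the second lemma. Conditions (a) and (b) of the corollary are then literally the conclusions of the first and second lemmas applied to $f_1$ and $f_2$ respectively.

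The reverse direction is then immediate: given (a) and (b), the two lemmas imply that $f_1, f_2$ are of rapid decay (resp.\ Schwartz) near $u=0$, and each of these properties is preserved under addition, so $f = f_1 + f_2$ enjoys it too.

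For the forward direction I would first establish a small auxiliary stability result: multiplication by an element of $\scr{A}(\sbN_PQ)$ preserves rapid decay (resp.\ Schwartz) near $u=0$. Since $\varrho \in C^\infty(P)$ is independent of the zoom coordinate $u$, both $\pi^*\varrho$ and $1-\pi^*\varrho$ are degree-$0$ elements of $\scr{A}(\sbN_PQ)$, so this stability applied to $f$ yields that $f_1$ and $f_2$ are of rapid decay (resp.\ Schwartz) whenever $f$ is. The rapid decay case follows at once from the closure of $\scr{A}(\sbN_PQ)$ under multiplication: if $af$ is bounded for all $a \in \scr{A}(\sbN_PQ)$, then $a(\pi^*\varrho)f = (a \cdot \pi^*\varrho)f$ is bounded as well. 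For the Schwartz case I would apply the Leibniz rule to $\D((\pi^*\varrho) f)$ for an arbitrary $\D \in \scr{D}(\sbN_PQ)$, using the description of $\scr{D}(\sbN_PQ)$ established earlier as a left $\scr{A}(\sbN_PQ)$-module generated by monomials in the three types of homogeneous vector fields $Y \in \mf{X}(P,Q)$, $uX$ with $X \in \mf{X}(P)$, and $\T$, together with the fact that each of these generators acts on and so preserves $\scr{A}(\sbN_PQ)$. This writes $\D((\pi^*\varrho) f)$ as a finite sum $\sum_i a_i (\D_i f)$ with $a_i \in \scr{A}(\sbN_PQ)$ and $\D_i \in \scr{D}(\sbN_PQ)$; each term is bounded on $u^{-1}([-1,1])$ because $a_i \D_i \in \scr{D}(\sbN_PQ)$ and $f$ is Schwartz near $u=0$.

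The only point requiring any real care is this Leibniz/stability step in the Schwartz case, which rests on the fact that each generating vector field of $\scr{D}(\sbN_PQ)$ acts by a derivation on the Rees algebra. Everything else is a direct assembly of the two preceding lemmas, with conditions (a) and (b) simply repackaging their respective conclusions.
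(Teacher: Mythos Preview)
Your proposal is correct and follows exactly the approach the paper intends: the paper's entire ``proof'' is the sentence ``Combining the lemmas yields the following,'' and you have correctly spelled out that combination by splitting $f=(\pi^*\varrho)f+(1-\pi^*\varrho)f$ and feeding the pieces into the two preceding lemmas. The one non-obvious step you identify and handle---stability of the Schwartz-near-$u=0$ property under multiplication by $\pi^*\varrho\in\scr{A}(\sbN_PQ)$, via the Leibniz rule and the fact that the homogeneous generating vector fields of $\scr{D}(\sbN_PQ)$ preserve the Rees algebra---is exactly what is needed and is left implicit in the paper.
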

\begin{example}
\label{ex:gaussian}
With notation as in Lemma \ref{l:SchwartzProperties}, suppose $|\cdot|$ is a norm on the fibers of $\dN_PQ$, and let $\chi \in C^\infty_c(N)$ be equal to $1$ on $Q$. Identify $N$ with a disk subbundle in $\dN_PQ$ via the tubular neighborhood embedding. Then
\[ f(u,v)=\begin{cases} \chi(v) e^{-|v|^2/u^2},  &\text{ if } u \ne 0, v \in N,\\ e^{-|v|^2}, &\text{ if } u=0, v \in \dN_PN, \end{cases}\] 
is a smooth function on $\sbN_NQ$ that is Schwartz near $u=0$. Indeed $(\iota_*f)(u,v)=\chi(uv)e^{-|v|^2}$ is Schwartz and apply Lemma \ref{l:SchwartzProperties}. Extending by $0$ we obtain a smooth function on $\sbN_PQ$ that is Schwartz near $u=0$. One thus obtains a collection of examples $\D f$ where $\D \in \scr{D}(\sbN_PQ)$.
\end{example}

\subsection{Getzler order}
Let $(B,\rho\colon B \rightarrow TP,h)$ be a metrised Lie algebroid of rank $r$ over $P$ such that $\rho$ is transverse to $TQ$. Let $R$ denote the curvature of the Levi-Civita $B$-connection $\nabla$ for $(B,h)$. Let $(W,c\colon \Cl(B^*)\rightarrow \End(W),\nabla)$ be a $\Cl(B^*)$-module equipped with a Clifford $B$-connection (also denoted $\nabla$) having curvature $F$. Recall that $\Cl(B^*)$ has a natural filtration 
\[ \Cl^0(B^*)\subset \Cl^1(B^*)\subset \cdots \subset \Cl^r(B^*)=\Cl(B^*).\] 
This induces a filtration of the endomorphism bundle 
\[ \End^k(W)=\bCl^k(B^*)\otimes \End_{\Cl}(W).\] 
We assume that $j^*W$ is equipped with a filtration 
\[ 0=(j^*W)^{-1}\subset (j^*W)^0\subset (j^*W)^1\subset \cdots \subset (j^*W)^n=j^*W \] 
satisfying:
\begin{enumerate}[(i)]
\item $j^*\End^k(W)\cdot (j^*W)^l\subset (j^*W)^{k+l}$;
\item $(j^*W)^k$ is preserved by the $j^!B$-connection $j^!\nabla$.
\end{enumerate}
Let
\[ \gr(j^*W)=\bigoplus_{k=0}^n (j^*W)^k/(j^*W)^{k-1} \rightarrow Q \]
be the associated graded vector bundle. For a section $\sigma \in \Gamma(W)$, let
\begin{equation} 
\label{e:WQorder}
o^{W,Q}(\sigma)=\tn{min}\big\{k \, \big| \, j^*\sigma \in \Gamma((j^*W)^k)\big\}.
\end{equation}
The relevant example for the $\gamma$-fixed point calculations in the next section is the following.
\begin{example}
\label{ex:fixedptexample1}
With notation as in the previous section, let $P=G$, $Q=M^\gamma$, $B=\delta A$, and $W=\delta E$ with the induced connection \eqref{e:deltaAconn}. Then $j^*W=(E\otimes E^*)|_{M^\gamma}\simeq \End(E)|_{M^\gamma}$ has filtration
\[ (j^*W)^k=\End^k(E)|_{M^\gamma}.\]
Property (i) is immediate, while (ii) holds because the Clifford $B$-connection on $W$ is induced by a Clifford $A$-connection on $E$.
\end{example}

Let $\U(B,W)$ be the algebra of differential operators acting on $W$ generated by bundle endomorphisms together with the operators $\nabla_X$ for $X \in \Gamma(B)$. Any element of $\U(B,W)$ is a linear combination of operators of the form
\begin{equation} 
\label{e:ordering}
L\nabla_{X_1}\cdots \nabla_{X_l},
\end{equation}
where $L\in \Gamma(\End^k(W))$, $X_1,...,X_l \in \Gamma(B)$.
\begin{definition}
The operator in equation \eqref{e:ordering} will be said to have \emph{Getzler order at most} $k+l$. The \emph{Getzler order} $o^g(T)$ of an operator $T \in \U(B,W)$ is 
\[ o^g(T)=\inf \big\{\tn{max}(k_1+l_1,...,k_m+l_m)\mid T=T_1+\cdots+T_m\big\} \]
where the infimum is taken over all finite decompositions of $T$ into operators $T_j$ of the form \eqref{e:ordering}, where $T_j=R_j\nabla_{X_{1,j}}\cdots \nabla_{X_{l_j,j}}$ and $R_j \in \Gamma(\End^{k_j}(W))$.
\end{definition}

\subsection{Scaling order}
Define a valuation $o^{Q,W,0}$ on $\Gamma(W)$ by
\[ o^{Q,W,0}(\sigma)=\begin{cases} -\infty &\text{ if } j^*\sigma=0\\
o^{Q,W}(\sigma) & \text{ else,} \end{cases}\]
where $o^{Q,W}(\sigma)$ is as in \eqref{e:WQorder}. By analogy with \eqref{e:vanishingorder}, we introduce the following.
\begin{definition}
The \emph{scaling order} $o^{sc}(\sigma) \in \{-n,-n+1,...,\infty\}$ of $\sigma \in \Gamma(W)$ is
\begin{equation} 
\label{e:scalingdegree}
o^{sc}(\sigma)=\inf_{T\in \U(B,W)}\big\{o^g(T)-o^{Q,W,0}(T\sigma)\big\}.
\end{equation}
\end{definition}
There is an equivalent and arguably more intuitive description of scaling order. Choose an Euler-like section $\E \in \Gamma(B)$ for $Q\hookrightarrow P$, i.e. a section $\E$ defined on a neighborhood $U$ of $Q$, vanishing along $Q$, and such that the linearization of the vector field $\rho(\E)$ along $Q$ is the Euler vector field of the normal bundle $\dN_PQ$. Existence of such a section is ensured by the transversality assumption $\rho \pitchfork TQ$, cf. \cite{bischoff2020deformation}. We say $\sigma \in \Gamma(W)$ is \emph{synchronous} if $\nabla_\E \sigma=0$. For $\sigma$ synchronous and $f \in C^\infty(P)$, we say that $f\sigma$ has \emph{Taylor order at most} $o^{van}(f)-o^{Q,W,0}(\sigma)$. The sections $f\sigma$ of this form and with Taylor order at most $d$ generate a $C^\infty(P)$-submodule $\W_d \subset \Gamma(W)$, fitting into a decreasing filtration
\[ \Gamma(W)=\W_{-n}\supset \W_{-n+1}\supset \cdots \supset \W_d\supset \cdots, \qquad \W_\infty=\bigcap_{d=-n}^\infty \W_d, \]
and we define the \emph{Taylor order} of a general section $\sigma$ to be the maximal $d \in \{-n,-n+1,...,\infty\}$ such that $\sigma \in \W_d$.

\begin{proposition}
\label{p:taylororder}
The scaling order of a section equals its Taylor order.
\end{proposition}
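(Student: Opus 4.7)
The plan is to prove both inequalities $o^{sc}(\sigma) \ge d_{\mathrm{Tay}}(\sigma)$ and $o^{sc}(\sigma) \le d_{\mathrm{Tay}}(\sigma)$, where $d_{\mathrm{Tay}}(\sigma)$ denotes the Taylor order. The first is a Leibniz-rule computation exploiting the filtration hypotheses (i), (ii); the second is proved by writing $\sigma$ as a Taylor series in synchronous sections relative to a chosen Euler-like section and exhibiting an explicit $T \in \U(B,W)$ that isolates the leading term.

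For $o^{sc}(\sigma) \ge d_{\mathrm{Tay}}(\sigma)$, by $C^\infty(P)$-linearity of the generators of $\W_d$ it suffices to show $o^{sc}(f\tilde\sigma) \ge d$ whenever $\tilde\sigma$ is synchronous and $o^{van}(f) - o^{Q,W,0}(\tilde\sigma) \ge d$. Write a general $T \in \U(B,W)$ as a linear combination of monomials $L\nabla_{X_1}\cdots\nabla_{X_l}$ with $L \in \Gamma(\End^k(W))$ realising $o^g(T) = k+l$. The Leibniz rule expands $T(f\tilde\sigma)$ into a sum of terms
\[
L \cdot \bigl(\rho(X_{i_1})\cdots\rho(X_{i_p})f\bigr) \cdot \nabla_{X_{j_1}}\cdots\nabla_{X_{j_{l-p}}}\tilde\sigma,
\]
indexed by partitions $\{i_1<\cdots<i_p\} \sqcup \{j_1<\cdots<j_{l-p}\} = \{1,\ldots,l\}$. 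A term restricts non-trivially to $Q$ only if the function factor is non-vanishing on $Q$, which forces $p \ge o^{van}(f)$. Property~(i) implies that left multiplication by $L$ raises $o^{Q,W,0}$ by at most $k$, while the combination of property~(ii) with the filtration-preserving nature of the Clifford connection implies that each $\nabla_X$ raises $o^{Q,W,0}$ by at most one (after extending $(j^*W)^\bullet$ to a neighborhood of $Q$ via radial $\nabla_\E$-parallel transport). Combining these bounds,
\[
o^g(T) - o^{Q,W,0}(T(f\tilde\sigma)) \;\ge\; (k+l) - \bigl(k + o^{Q,W,0}(\tilde\sigma) + (l-p)\bigr) \;=\; p - o^{Q,W,0}(\tilde\sigma) \;\ge\; d,
\]
and taking the infimum over $T$ gives $o^{sc}(\sigma) \ge d$.

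For $o^{sc}(\sigma) \le d_{\mathrm{Tay}}(\sigma)$, fix an Euler-like section $\E$ for $Q$, whose existence is guaranteed by the transversality $\rho \pitchfork TQ$ (Section~\ref{s:splitting}). Radial $\nabla_\E$-parallel transport on a tubular neighborhood of $Q$ produces, for any $\sigma \in \Gamma(W)$ and any $N > 0$, a synchronous Taylor expansion
\[
\sigma = \sum_{|\alpha| < N} y^\alpha \tilde\sigma_\alpha + R_N,
\]
where the $y^\alpha$ are degree-$|\alpha|$ monomials in normal coordinates on $Q$, each $\tilde\sigma_\alpha$ is synchronous with $j^*\tilde\sigma_\alpha$ equal to the $\alpha$-th Taylor coefficient, and $R_N$ vanishes to order $N$ at $Q$. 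Let $d_0 = d_{\mathrm{Tay}}(\sigma)$ and choose $\alpha_0$ minimising $|\alpha| - o^{Q,W,0}(\tilde\sigma_\alpha)$, so $|\alpha_0| - o^{Q,W,0}(\tilde\sigma_{\alpha_0}) = d_0$. Pick sections $X_1,\ldots,X_{|\alpha_0|} \in \Gamma(B)$ whose anchor images jointly implement $\partial_{y^{\alpha_0}}$ on $Q$, and set $T = \nabla_{X_1}\cdots\nabla_{X_{|\alpha_0|}}$, so $o^g(T) = |\alpha_0|$. Leibniz expansion of $T\sigma$ shows that all terms with $\alpha \ne \alpha_0$ either vanish identically (insufficient monomial degree or mismatched multi-index) or retain a residual $y$-factor vanishing on $Q$, and $TR_N$ vanishes on $Q$ whenever $N > |\alpha_0|$. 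Consequently $j^*(T\sigma) = \alpha_0!\cdot j^*\tilde\sigma_{\alpha_0} \ne 0$, giving $o^{Q,W,0}(T\sigma) = o^{Q,W,0}(\tilde\sigma_{\alpha_0}) = |\alpha_0| - d_0$ and
\[
o^{sc}(\sigma) \;\le\; o^g(T) - o^{Q,W,0}(T\sigma) \;=\; d_0.
\]

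The main technical point, and what I expect to be the principal obstacle, is the filtration estimate used in the Leibniz step: establishing that each $\nabla_X$ raises $o^{Q,W,0}$ by at most one on sections extending synchronously from a filtered piece of $j^*W$. This is transparent in Example~\ref{ex:fixedptexample1}, where the Clifford $A$-connection preserves Clifford order on $\End(E)$; in the abstract setting it requires extending $(j^*W)^\bullet$ to a neighborhood of $Q$ by radial parallel transport (well-defined by hypothesis (ii)) and checking that $\nabla_X$ respects this extended filtration for general $X \in \Gamma(B)$.
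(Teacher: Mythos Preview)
Your overall strategy---proving the two inequalities separately, one by a Leibniz expansion and the other by a synchronous Taylor expansion---is the right one and is the Higson--Yi approach the paper defers to. There are, however, two genuine gaps.

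First, in the direction $o^{sc}\le d_{\mathrm{Tay}}$, the claim that all terms with $\alpha\ne\alpha_0$ vanish on $Q$ is false when $|\alpha|<|\alpha_0|$. In the Leibniz expansion of $T(y^\alpha\tilde\sigma_\alpha)$ with $|\alpha_0|$ covariant derivatives, the terms in which exactly $|\alpha|$ of the $\rho(X_i)$ hit $y^\alpha$ leave a contribution of the form $(\text{const})\cdot j^*\big(\nabla_{X_{j_1}}\cdots\nabla_{X_{j_{|\alpha_0|-|\alpha|}}}\tilde\sigma_\alpha\big)$, which carries no residual $y$-factor and is generally nonzero on $Q$. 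To salvage the argument one must (a) choose $\alpha_0$ of minimal length among minimizers of $|\alpha|-o^{Q,W,0}(\tilde\sigma_\alpha)$, and (b) show that these unwanted contributions lie in $(j^*W)^{m}$ for some $m<o^{Q,W,0}(\tilde\sigma_{\alpha_0})$, so that they cannot cancel the $\alpha_0$-term in the associated graded. Step (b) is exactly the filtration estimate you flag for the other direction, so the two halves of the proof are not independent.

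Second, and more seriously, your proposed proof of that filtration estimate---extend $(j^*W)^\bullet$ by radial $\nabla_\E$-parallel transport and check that $\nabla_X$ raises it by at most one---is not developed and does not obviously work. After a single $\nabla_X$ the section is no longer synchronous, so the single-step statement does not iterate; and the parallel-transport-extended filtration is in general not preserved (even up to a shift) by $\nabla_X$ for $X$ transverse to $Q$, since nothing in hypotheses (i), (ii) or the Clifford property controls the connection coefficients in normal directions. The paper's actual mechanism for handling this in the Lie algebroid generality, which you do not mention, is the splitting theorem (Section~\ref{s:splitting}): transversality $\rho\pitchfork TQ$ produces a local model $B|_U\simeq p^!j^!B$ (locally $B\simeq j^!B\times TV$), after which the question reduces to the case $B=TP$ already handled in \cite{higsonyi,sadeghbraverman2022}. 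You should invoke this reduction rather than attempt the estimate directly in the abstract setting.
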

In particular Taylor order is independent of the choice of Euler-like section $\E$ as \eqref{e:scalingdegree} is manifestly so. The proof of this result is similar to \cite[Section 6]{higsonyi} and \cite[Section 4]{sadeghbraverman2022}. A new feature in Proposition \ref{p:taylororder} is that we have defined Getzler order, scaling order, and Taylor order using a Clifford $B$-connection $\nabla$ for a general Lie algebroid $B$, whereas \emph{loc. cit.} work in the setting $B=TP$. However by the transversality assumption $\rho \pitchfork TQ$, the splitting theorem for Lie algebroids (see Section \ref{s:splitting}) applies, yielding the local model $B\simeq p^!j^!B$ where $p\colon U\rightarrow Q$ is the projection for the tubular neighborhood embedding induced by $\E$. Making use of the local model, there is a relatively routine adaptation of the arguments in \cite[Section 6]{higsonyi} and \cite[Section 4]{sadeghbraverman2022} that proves Proposition \ref{p:taylororder}.

\subsection{Rescaled bundle}
Define the following $\scr{A}(\sbN_PQ)$-module:
\[ \scr{M}(\sbW)=\bigg\{\sum u^{-k}\sigma_k \in \Gamma(W)[u,u^{-1}] \, \bigg| \,o^{sc}(\sigma_k)=k\bigg\}.\]
Using the description of scaling order from Proposition \ref{p:taylororder}, one can show (cf. \cite[Sections 3.4--3.6]{higsonyi} for details) that $\scr{M}(\sbW)$ induces a locally free and finitely generated sheaf of modules over $\sbN_PQ$, and hence there is an associated vector bundle
\[ \sbW\rightarrow \sbN_PQ \]
as the notation suggests. We refer to $\sbW$ as the \emph{rescaled bundle}. The restriction of $\sbW$ to the special fiber $\dN_PQ=u^{-1}(0)$ is the vector bundle $\dW\rightarrow \dN_PQ$ associated to the $\scr{A}(\dN_PQ)$-module
\[ \scr{M}(\dW)=\scr{M}(\sbW)/u\cdot\scr{M}(\sbW).\]
By construction $\scr{M}(\dW)$ is the associated graded module for the filtration of $\Gamma(W)$ by scaling order.

Suitably rescaled versions of differential operators that act on sections of $W$ will induce operators on $\sbN_PQ$ and hence on $\dN_PQ$ by restriction. The simplest case is a bundle endomorphism $L \in \End^l(W)$. Then $u^lL$ acts naturally on $\scr{M}(\sbW)$ and induces a bundle endomorphism $\sbW\rightarrow \sbW$, and hence a bundle endomorphism $\dL\colon \dW \rightarrow \dW$ by restriction. The corresponding map on sections $\scr{M}(\dW)$ is simply the induced degree $l$ map on the associated graded. For example, if $\alpha \in \Gamma(B^*)$ then $L=c(\alpha)$ is of this type with $l=1$.

Let $X \in \Gamma(B)$. There is a derivation of the Rees algebra $\scr{A}(\sbN_PQ)$, denoted $uX$ which acts by
\[ uX\sum u^{-k}f_k=\sum u^{-k+1}Xf_k.\]
Let $\dX$ denote the induced derivation on $\scr{A}(\dN_PQ)$. One has similar module derivations $u\nabla_X=\nabla_{uX}$ on $\scr{M}(\sbW)$ and $\dnabla_{\dX}$ on $\scr{M}(\dW)$. Since elements of $\scr{A}(\sbN_PQ)$ are finite Laurent polynomials, the derivation $\dX$ induced by $uX$ on the associated graded $\scr{A}(\dN_PQ)$ is locally nilpotent. In particular the exponential 
\[ \exp(\dX) \in \tn{Aut}(\scr{A}(\dN_PQ)) \]
may be defined algebraically by the usual power series formula. Similarly $\dnabla_{\dX}$ is locally nilpotent, and hence the exponential
\[ \exp(\dnabla_{\dX})\in \tn{Aut}(\scr{M}(\dW)) \]
is well-defined. 

The next proposition describes commutation properties of the operators $\dnabla_{\dX}$.
\begin{proposition}
The rescaled curvatures $R(uX_1,uX_2)$, $F(uX_1,uX_2)$ of $B$, $E$ respectively, induce bundle endomorphisms $\dR(\dX_1,\dX_2)=\dF(\dX_1,\dX_2)$ of $\dW$, and
\[ \dR(\dX_1,\dX_2)=[\dnabla_{\dX_1},\dnabla_{\dX_2}]. \]
The operators $\dnabla_{\dX}$ and $\dR(\dX_1,\dX_2)$ commute. On exponentiating one has, 
\begin{equation}
\label{e:bch}
\exp(\dnabla_{\dX_1})\exp(\dnabla_{\dX_2})=\exp(\dR(\dX_1,\dX_2)/2)\exp(\dnabla_{\dX_1+\dX_2}).
\end{equation}
\end{proposition}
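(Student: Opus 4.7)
The plan is to perform the entire calculation at the level of the Rees module $\scr{M}(\sbW)\subset \Gamma(W)[u,u^{-1}]$ and then pass to the associated graded $\scr{M}(\dW)=\scr{M}(\sbW)/u\scr{M}(\sbW)$. The guiding observation is that $u$ is central (a formal scalar commuting with every operator of interest), so any error term of the form $u\cdot(\text{natural Getzler rescaling of an operator})$ lies in $u\scr{M}(\sbW)$ and thus vanishes on the associated graded. With this principle in hand, the identity $\dR(\dX_1,\dX_2)=\dF(\dX_1,\dX_2)$ can be read off directly from the decomposition \eqref{e:barF}: the Riemann part $R(X_1,X_2)\in\Gamma(\Cl^{[2]}(B^*))$ has Getzler order $2$, so $u^2 R(X_1,X_2)$ is its natural rescaling and descends to $\dR(\dX_1,\dX_2)$, whereas the twisting part $F^{E/S}(X_1,X_2)\in\Gamma(\End_\Cl(W))$ has Getzler order $0$, so $u^2F^{E/S}(X_1,X_2)=u\cdot(uF^{E/S}(X_1,X_2))$ dies in $\scr{M}(\dW)$.

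For the first commutator identity I would compute, on $\scr{M}(\sbW)$,
\[ [u\nabla_{X_1},u\nabla_{X_2}]=u^2[\nabla_{X_1},\nabla_{X_2}]=u^2F(X_1,X_2)+u^2\nabla_{[X_1,X_2]}. \]
The trailing term $u^2\nabla_{[X_1,X_2]}=u\cdot(u\nabla_{[X_1,X_2]})$ vanishes on the associated graded, leaving $[\dnabla_{\dX_1},\dnabla_{\dX_2}]=\dF(\dX_1,\dX_2)=\dR(\dX_1,\dX_2)$. For the centrality of $\dR$ I would run the analogous calculation
\[ [u\nabla_{X_3},u^2R(X_1,X_2)]=u^3\nabla_{X_3}(R(X_1,X_2)), \]
viewing the right hand side as the covariant derivative of the endomorphism $R(X_1,X_2)$. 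Because the Levi-Civita $B$-connection preserves the Clifford filtration, $\nabla_{X_3}(R(X_1,X_2))$ remains in $\Gamma(\Cl^{[2]}(B^*))$ and is still of Getzler order $2$; thus $u^3\nabla_{X_3}(R(X_1,X_2))=u\cdot(u^2\nabla_{X_3}(R(X_1,X_2)))$ vanishes on $\scr{M}(\dW)$, proving $[\dnabla_{\dX_3},\dR(\dX_1,\dX_2)]=0$.

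Finally, \eqref{e:bch} would follow from the ``Heisenberg'' Baker--Campbell--Hausdorff identity $\exp(A)\exp(B)=\exp(A+B+\tfrac12[A,B])$, valid whenever $[A,B]$ commutes with both $A$ and $B$, applied to $A=\dnabla_{\dX_1}$, $B=\dnabla_{\dX_2}$, $[A,B]=\dR(\dX_1,\dX_2)$. The linearity $\dnabla_{\dX_1}+\dnabla_{\dX_2}=\dnabla_{\dX_1+\dX_2}$ (from $C^\infty$-linearity of $\nabla$ in its subscript), together with the commuting of $\dR(\dX_1,\dX_2)$ with $\dnabla_{\dX_1+\dX_2}$, lets one split off the $\exp(\dR(\dX_1,\dX_2)/2)$ factor, producing the stated formula. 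Convergence is not at issue: $\dnabla_{\dX}$ is locally nilpotent on $\scr{M}(\dW)$ as noted in the paragraph preceding the statement, and $\dR(\dX_1,\dX_2)$ takes values in a finite-dimensional fibre of $\End(\dW)$ so its fibrewise exponential is automatic. The only real subtlety, and what I expect to be the main point of care, is the bookkeeping of Getzler orders in the two commutator calculations, making sure that every ``error'' subterm genuinely carries one extra factor of $u$ beyond its natural rescaling so that it is killed on passage to the associated graded.
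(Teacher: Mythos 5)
Your proof is correct and follows essentially the same route as the paper: decompose $F$ via \eqref{e:barF}, show that the terms $F^{E/S}$ and $\nabla_{[X_1,X_2]}$ fall short of the Getzler order required to survive the rescaling, invoke the Clifford connection property for the centrality of $\dR$, and finish with the degenerate Baker--Campbell--Hausdorff identity. The paper phrases the vanishing in terms of Getzler orders ($o^g(F^{E/S}(X_1,X_2))<2$, $o^g(\nabla_{[X_1,X_2]})<2$, $o^g([\nabla_X,R(X_1,X_2)])<3$) whereas you make the equivalent factor-of-$u$ bookkeeping explicit, which is a matter of presentation rather than substance. One small wording correction: it is the Clifford connection property of $\nabla$ on $W$ (i.e.\ $[\nabla_X,c(\alpha)]=c(\nabla_X\alpha)$) that keeps $[\nabla_{X_3},R(X_1,X_2)]$ inside $\Gamma(\Cl^{[2]}(B^*))$; the Levi--Civita $B$-connection enters only on the right-hand side of that identity.
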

\begin{proof}
Recall that 
\[ F=R\otimes 1+1\otimes F^{E/S} \in \Gamma(\bCl(B^*)\otimes \End_{\Cl}(W)). \]
As $o^g(F^{E/S}(X_1,X_2))$, $o^g(\nabla_{[X_1,X_2]})$ are both less than $2$, the rescaled operators $F^{E/S}(uX_1,uX_2)$, $\nabla_{[uX_1,uX_2]}$ on $\scr{M}(\sbW)$ restrict to the zero operator over the $u=0$ fiber $\scr{M}(\dW)$, hence 
\[ \dF(\dX_1,\dX_2)=\dR(\dX_1,\dX_2)=[\dnabla_{\dX_1},\dnabla_{\dX_2}]. \]
Similarly $o^g([\nabla_X,R(X_1,X_2)])<3$ by the Clifford connection property, hence $[\dnabla_{\dX},\dR(\dX_1,\dX_2)]=0$. The last claim follows from the others and the initial terms of the Baker-Campbell-Hausdorff formula.
\end{proof}

The next goal is to describe an embedding
\[ \vep\colon \scr{M}(\dW)\hookrightarrow \Gamma(p^*\gr(j^*W)). \]
For $q\in Q$ define an evaluation map
\[ \vep_q=\vep_{0_q} \colon \scr{M}(\dW) \rightarrow \gr(j^*W)_q,\]
by
\begin{equation} 
\label{e:vepm}
\vep_q\Big(\sum u^{-k}\sigma_k\Big)=\sum_{k\le 0} \sigma_k(q)_{[-k]} 
\end{equation}
where $\sigma_k(q)_{[-k]}$ denotes the image of $\sigma_k(q)\in (j^*W)^{-k}$ in the quotient $(j^*W)^k/(j^*W)^{k-1}$. More generally, for $X_q \in B_q$ with extension $X \in \Gamma(B)$, define
\[ \vep_{X_q}\colon \scr{M}(\dW) \rightarrow \gr(j^*W)_q, \]
by
\[ \vep_{X_q}=\vep_q \circ \exp(\dnabla_{\dX}). \]
One checks that this is independent of the choice of extension $X$ using \eqref{e:vepm}.

Choose a splitting 
\begin{equation} 
\label{e:orthogsplitting}
\dN_PQ\hookrightarrow j^*B 
\end{equation}
of the short exact sequence of vector bundles
\begin{equation}
\label{e:normalbundlesequence}
0 \rightarrow j^!B \rightarrow j^*B \rightarrow \dN_PQ\rightarrow 0 
\end{equation}
where the map $j^*B\rightarrow \dN_PQ$ is the composition of the anchor map with the quotient map to the normal bundle (this composition is surjective since $\rho \pitchfork TQ$). For example, the metric $h$ on $B$ determines such a splitting.

Below we shall identify $\dN_PQ$ with the image of \eqref{e:orthogsplitting}. For $Y_q\in \dN_PQ_q$ and $\dsigma \in \scr{M}(\dW)$ define
\[ \dsigma(Y_q)=\vep_{Y_q}\dsigma \in \gr(j^*W)_q=(p^*\gr(j^*W))_{Y_q}.\]
\begin{proposition}
\label{p:iso}
The map
\[ \vep \colon \dsigma \mapsto \Big(Y_q\in \dN_PQ_q\mapsto \dsigma(Y_q)\in p^*\gr(j^*W)_{Y_q}\Big) \]
is an isomorphism from $\scr{M}(\dW)$ onto the space of smooth sections of the bundle $p^*\gr(j^*W)\rightarrow \dN_PQ$ that are polynomial along the fibers of the projection $p$. Under $\vep$, a bundle endomorphism $\dL$ induced by $L \in \Gamma(\End^k(W))$ goes to the induced degree $k$ endomorphism $p^*(j^*L)_{[k]}$ on the associated graded.
\end{proposition}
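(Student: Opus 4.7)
The plan is to reduce to a local model via the splitting theorem, realize $\scr{M}(\dW)$ as polynomial sections of $p^*\gr(j^*W)$ via Taylor expansion in a synchronous frame adapted to the filtration, and then verify that $\vep$ coincides with this identification; the argument parallels \cite[Sections 3.4--3.6]{higsonyi} and \cite{sadeghbraverman2022}.

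First, since $\rho \pitchfork TQ$, an Euler-like section $\E \in \Gamma(B)$ produces via Section \ref{s:splitting} a tubular neighborhood $U$ of $Q$ in $P$, a diffeomorphism $U \simeq \dN_PQ$, and an isomorphism of Lie algebroids $B|_U \simeq p^!(j^!B)$. All ingredients of the statement are natural and local, so I may assume $P = V \times Q$ for some vector space $V$ with $B$ splitting compatibly. Near any point of $Q$, pick a filtered local frame $w_1,\ldots,w_r$ of $j^*W$---so that $w_1,\ldots,w_{d_k}$ is a frame of $(j^*W)^k$---and extend it to a synchronous frame of $W$ on $U$ by solving $\nabla_\E w_i = 0$ along radial geodesics. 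In this synchronous adapted frame any section decomposes as $\sigma = \sum_i f_i w_i$, and Proposition \ref{p:taylororder} gives
\[ o^{sc}(\sigma) = \min_i \bigl(o^{van}(f_i) - o^{Q,W}(w_i)\bigr). \]
Taylor-expanding each $f_i$ in normal coordinates $y$ then realizes $\scr{M}(\dW) = \gr^\bullet\Gamma(W)$ canonically as sections of $p^*\gr(j^*W) \to \dN_PQ$ that are polynomial along the fibers: the monomial $y^\alpha \otimes [w_i]$ sits in polynomial degree $|\alpha|$, filtration degree $o^{Q,W}(w_i)$, and total scaling degree $|\alpha| - o^{Q,W}(w_i)$.

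Second, I would show that $\vep$ realizes this canonical identification. By definition $\vep_{0_q}$ picks out the constant Taylor coefficient at $q$ in each filtration degree, i.e.\ evaluates the polynomial section at $0_q$. For $Y_q \ne 0_q$ with extension $X \in \Gamma(B)$, the key point is that $\dnabla_{\dX}$ acts on $\scr{M}(\dW)$ as the polynomial derivation $Y\cdot \partial_y$ on sections of $p^*\gr(j^*W)$; granting this, $\exp(\dnabla_{\dX})$ is polynomial translation by $Y$, so $\vep_{Y_q}$ is evaluation at $Y_q$. Bijectivity onto polynomial sections is then immediate from the frame description. The endomorphism statement follows directly: hypothesis (i) gives that $j^*L$ is a filtered map of degree $k$, inducing $(j^*L)_{[k]}$ on $\gr(j^*W)$; on polynomial sections, pointwise multiplication by $L$ descends fiberwise to multiplication by this map, i.e.\ by $p^*(j^*L)_{[k]}$ after pullback.

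The main technical obstacle will be verifying that $\dnabla_{\dX}$ really coincides with the polynomial derivation along $Y$. This demands checking that all contributions to $u\nabla_X$ beyond the bare directional derivative strictly raise the $u$-grading on $\scr{M}(\sbW)$ and hence vanish on passing to $\scr{M}(\dW)$. The synchronous frame eliminates the radial connection-form terms by construction; for the non-radial directions, one uses that for a Clifford $B$-connection the Christoffel-type correction terms are of strictly lower Getzler order than the leading derivative piece---the same mechanism underlying Getzler rescaling. Once this verification is carried out in the local model (a direct but notationally heavy unwinding of the definitions, handling both the horizontal $p^*(j^!B)$ and vertical $TV$ factors of the split Lie algebroid), the proposition follows.
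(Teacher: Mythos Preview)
The paper itself omits this proof, pointing to \cite[Sections~3.4--3.6]{higsonyi}; your first step (localize via the splitting theorem, choose a synchronous frame adapted to the filtration, and identify $\scr{M}(\dW)$ with fiberwise-polynomial sections via Taylor expansion) matches that reference. The problem is in your second step. You assert that under this identification $\dnabla_{\dX}$ becomes the bare directional derivative, on the grounds that ``for a Clifford $B$-connection the Christoffel-type correction terms are of strictly lower Getzler order than the leading derivative piece---the same mechanism underlying Getzler rescaling.'' This is exactly backwards: the whole point of Getzler rescaling is that the Clifford-degree-$2$ spin-connection piece of the connection form has the \emph{same} effective order as the derivative and therefore \emph{survives} in the $u\to 0$ limit. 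Concretely, in your synchronous frame the decomposition $\nabla_X=\partial_X+\omega(X)$ is not a splitting into a Getzler-order-$1$ part plus a Getzler-order-$0$ part; neither summand separately has Getzler order $\le 1$. Proposition~\ref{p:rescaledcovder}, proved immediately afterward in the paper, confirms this: under $\vep$ one has $\dnabla_{\dX}=\partial_X+\tfrac{1}{2}R(-,X)_{[2]}$, not $\partial_X$ alone. Were your claim correct that result would be false.

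Since $\exp(\dnabla_{\dX})$ is therefore not mere translation, your deduction that $\vep$ coincides with the Taylor map does not go through as written. A correct argument does not rely on $\dnabla_{\dX}$ being curvature-free. One route is to observe that the curvature correction strictly raises Clifford degree in $\gr(j^*W)$, so $\vep$ agrees with the Taylor map modulo terms of higher Clifford degree; this makes $\vep$ unipotent upper-triangular with respect to the Clifford filtration on polynomial sections, hence an isomorphism because the Taylor map is. Equivalently one can verify bijectivity of $\vep$ directly by induction on Clifford degree, as in \cite{higsonyi}.
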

We omit the proof which is similar to the discussion in \cite[Sections 3.4--3.6]{higsonyi}. The next result is similar to \cite[Lemma 3.30]{higsonyi}; we include the calculation this time. If $X \in \Gamma(B)$ is such that $j^*X \in \Gamma(j^!B)$ then $o^g(\nabla_X)<1$ hence $\dnabla_{\dX}=0$. Hence the only case of interest is when $j^*X \in \Gamma(\dN_PQ)$ (recall $\dN_PQ\subset j^*B$ via \eqref{e:orthogsplitting}).
\begin{proposition}
\label{p:rescaledcovder}
Let $X \in \Gamma(B)$ such that $j^*X\in \Gamma(\dN_PQ)$. Under the isomorphism given in Proposition \ref{p:iso}, the operator $\dnabla_{\dX}$ goes to
\[ \partial_X+\frac{1}{2}R(-,X)_{[2]} \]
where $\partial_X$, resp. $R(-,X)_{[2]}$, are given at the point $Y_q \in \dN_PQ_q$ by vertical directional derivative in the direction $X_q$, resp. $R(Y_q,X_q)_{[2]}$.
\end{proposition}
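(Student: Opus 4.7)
The plan is to derive the formula by a purely algebraic manipulation using the definition $\vep_{Y_q} = \vep_q \circ \exp(\dnabla_{\dY})$ together with the BCH identity \eqref{e:bch} from the preceding proposition. No local synchronous frame computation is needed.

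Extend $Y_q \in \dN_PQ_q$ (viewed in $j^*B_q$ via \eqref{e:orthogsplitting}) to a section $Y \in \Gamma(B)$; by the independence of $\vep_{Y_q}$ of the choice of extension, the final result is unaffected by this choice. The identity \eqref{e:bch}, combined with the centrality of $\dR(\dY,\dX)$, yields
\[ \exp(\dnabla_{\dY + t\dX}) \,=\, \exp(\dnabla_{\dY})\exp(t\dnabla_{\dX})\exp\!\bigl(-\tfrac{t}{2}\,\dR(\dY,\dX)\bigr). \]
Composing with $\vep_q$ and differentiating at $t=0$, using $\vep_q\circ\exp(\dnabla_{\dY+t\dX}) = \vep_{Y_q+tX_q}$ and the fact that $X_q \in \dN_PQ_q$ so that $Y_q+tX_q$ is a genuine curve in the fiber, gives
\[ \partial_X \vep(\dsigma)\big|_{Y_q} \,=\, \vep_q\!\bigl(\exp(\dnabla_{\dY})\,\dnabla_{\dX}\,\dsigma\bigr) \,-\, \tfrac{1}{2}\,\vep_q\!\bigl(\exp(\dnabla_{\dY})\,\dR(\dY,\dX)\,\dsigma\bigr). \]
The first summand is exactly $\vep_{Y_q}(\dnabla_{\dX}\,\dsigma)$.

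To handle the second summand, I would use that $\dnabla_{\dY}$ commutes with the central element $\dR(\dY,\dX)$, so $\exp(\dnabla_{\dY})$ commutes with $\dR(\dY,\dX)$ as well. The curvature $R(Y,X) \in \Gamma(\End^2(W))$ has Getzler order $\le 2$ because $R$ sits in the $\bCl^{[2]}(B^*)$-summand of the decomposition $F = R \otimes 1 + 1 \otimes F^{E/S}$ of \eqref{e:barF} under $\mathfrak{so}(B)\simeq \Cl^{[2]}(B^*)$. By Proposition \ref{p:iso}, the induced bundle endomorphism $\dR(\dY,\dX)$ of $\dW$ corresponds under $\vep$ to $p^*(j^*R(Y,X))_{[2]}$, which at the point $Y_q$ is simply the pointwise endomorphism $R(Y_q,X_q)_{[2]}$ on $\gr(j^*W)_q$. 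Since bundle endomorphisms commute with evaluation, the second summand equals $\tfrac{1}{2}\,R(Y_q,X_q)_{[2]} \cdot \vep(\dsigma)(Y_q)$, and rearranging yields the stated formula.

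The only potentially subtle point is justifying the $t$-derivative operations above at the level of the module $\scr{M}(\dW)$; this is unproblematic because $\dnabla_{\dX}$ acts locally nilpotently on $\scr{M}(\dW)$, so $\exp(\dnabla_{\dY+t\dX})\,\dsigma$ is automatically a polynomial in $t$ for each $\dsigma$, and its $t$-derivative at $t=0$ is defined without analytic subtleties.
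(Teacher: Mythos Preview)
Your proof is correct and follows essentially the same route as the paper's: both arguments compute $\vep_{Y_q}(\dnabla_{\dX}\dsigma)$ by inserting a parameter, applying the BCH identity \eqref{e:bch}, differentiating, and then invoking Proposition \ref{p:iso} to identify the curvature endomorphism under $\vep$. The only cosmetic difference is the order of steps: the paper starts from $\vep_q\exp(\dnabla_{\dY})\exp(s\dnabla_{\dX})$ and applies BCH to rewrite it as $\vep_q\exp(s\dR(\dY,\dX)/2)\exp(\dnabla_{\dY+s\dX})$, whereas you start from $\exp(\dnabla_{\dY+t\dX})$ and unwind it the other way; after the Leibniz rule the two computations produce the same two terms.
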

\begin{proof}
Let $\dsigma \in \scr{M}(\dW)$. Using \eqref{e:bch},
\begin{align*}
(\dnabla_{\dX}\dsigma)(Y_q)&=\vep_{Y_q}\dnabla_{\dX}\dsigma\\
&=\vep_q \exp(\dnabla_{\dY})\dnabla_{\dX}\dsigma\\
&=\vep_q \frac{\d}{\d s}\bigg|_{s=0} \exp(\dnabla_{\dY})\exp(\dnabla_{s\dX})\dsigma\\
&=\vep_q \frac{\d}{\d s}\bigg|_{s=0} \exp(s\dR(\dY,\dX)/2)\exp(\dnabla_{\dY+s\dX})\dsigma\\
&=\frac{1}{2}R(Y_q,X_q)_{[2]}\dsigma(Y_q)+\frac{\d}{\d s}\bigg|_{s=0} \dsigma(Y_q+sX_q)
\end{align*}
where in the last line we used the Leibniz rule and Proposition \ref{p:iso}.
\end{proof}

In one instance we will need a rescaled version of a bundle automorphism that covers a non-identity diffeomorphism of $P$. Let $\gamma \in \tn{Diff}(P)$ be a diffeomorphism fixing $Q$. In particular $\gamma$ preserves vanishing order of smooth functions on $Q$ and hence induces an algebra automorphism of $\scr{A}(\sbN_PQ)$. Assume there exist compatible lifts $\gamma^B \in \tn{Aut}(B,h)$ and $\gamma^W \in \tn{Aut}(W,c,\nabla)$. Suppose the restriction $\gamma^W|_Q \in \Gamma(\End^l(W)|_Q)$ for some $l$. Consider the map
\begin{equation}
\label{e:gammasbW}
\sigma \in \Gamma(W)[u,u^{-1}]\mapsto u^l \sigma^\gamma \in \Gamma(W)[u,u^{-1}],
\end{equation}
where $\sigma^\gamma(g)=\gamma^W \sigma(\gamma^{-1}g)$.
\begin{proposition}
The map \eqref{e:gammasbW} extends to a map $\scr{M}(\sbW)\rightarrow \scr{M}(\sbW)$ intertwining the automorphism of $\scr{A}(\sbN_PQ)$ induced by $\gamma$.
\end{proposition}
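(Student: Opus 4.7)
The plan is to reduce everything to a single scaling-order estimate: for every $\sigma\in\Gamma(W)$,
\[ o^{sc}(\sigma^\gamma) \;\geq\; o^{sc}(\sigma) - l. \]
Once this bound is in hand, well-definedness of $\sigma\mapsto u^l\sigma^\gamma$ on $\scr{M}(\sbW)$ is immediate: writing $\sigma=\sum u^{-k}\sigma_k$ with $o^{sc}(\sigma_k)\geq k$, the image $u^l\sigma^\gamma=\sum u^{-(k-l)}\sigma_k^\gamma$ satisfies $o^{sc}(\sigma_k^\gamma)\geq k-l$, so it lies in $\scr{M}(\sbW)$.

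To obtain the scaling-order estimate I would argue in two steps. First, $(\gamma,\gamma^W)$ induces a bijection $T\mapsto T^{\gamma^{-1}}$ of $\U(B,W)$ preserving Getzler order, characterized by $T\sigma^\gamma=(T^{\gamma^{-1}}\sigma)^\gamma$. On generators: since $\gamma^W$ preserves the Clifford $B$-connection, $\nabla_X\sigma^\gamma=(\nabla_{(\gamma^B)_*^{-1}X}\sigma)^\gamma$; since $\gamma^B$ is an isometry and $\gamma^W$ intertwines the Clifford action, pointwise conjugation preserves the filtration $\End^k(W)$, so zeroth-order factors stay at their Getzler level. Second, since $\gamma$ fixes $Q$ pointwise, $j^*(\sigma^\gamma)=\gamma^W|_Q\cdot j^*\sigma$, and the hypothesis $\gamma^W|_Q\in\Gamma(\End^l(W)|_Q)$ shifts the filtration of $j^*W$ by at most $l$, giving $o^{Q,W}(\sigma^\gamma)\leq o^{Q,W}(\sigma)+l$ (with the $-\infty$ convention for $o^{Q,W,0}$ preserved because $\gamma^W|_Q$ is a bundle automorphism, hence pointwise invertible). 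Combining the two inputs,
\[ o^g(T)-o^{Q,W,0}(T\sigma^\gamma) \;\geq\; o^g(T^{\gamma^{-1}})-o^{Q,W,0}(T^{\gamma^{-1}}\sigma)-l, \]
and taking the infimum over $T\in\U(B,W)$ (equivalently over $T^{\gamma^{-1}}$) yields the claim.

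For the intertwining property, the automorphism of $\scr{A}(\sbN_PQ)$ induced by $\gamma$ sends $\sum u^{-k}f_k\mapsto\sum u^{-k}f_k^\gamma$ with no $u$-shift, since $\gamma$ preserves vanishing order along $Q$. Because $\gamma$-pullback is multiplicative, $(f\sigma)^\gamma=f^\gamma\sigma^\gamma$, and multiplying through by $u^l$ gives
\[ u^l(a\sigma)^\gamma \;=\; (\gamma\cdot a)\cdot(u^l\sigma^\gamma), \]
as required. The main obstacle is Step~1, specifically the Getzler-order preservation under conjugation by $\gamma$: this is where all the compatibility hypotheses on $\gamma^B$ and $\gamma^W$ (metric, Clifford action, connection) are used in an essential way. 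Once that lemma is in place the rest of the proposition is a routine manipulation of definitions.
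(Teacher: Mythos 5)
Your argument is correct, but it takes a genuinely different route from the paper's. The paper invokes Proposition~\ref{p:taylororder} to pass to Taylor order: it observes that $\gamma$ maps any chosen Euler-like section $\E$ to another Euler-like section $\E'$, that $\gamma^W$ (being connection-preserving) sends $\E$-synchronous sections to $\E'$-synchronous sections, and that the filtration shift $j^*(\sigma^\gamma)=(\gamma^W|_Q)\,j^*\sigma \in \Gamma((j^*W)^{k+l})$ then pins down the Taylor order of $\sigma^\gamma$ by the uniqueness of synchronous extensions. You instead work directly with the differential-operator valuation \eqref{e:scalingdegree}: you establish that conjugation $T\mapsto T^{\gamma^{-1}}$ is a Getzler-order-preserving bijection of $\U(B,W)$ (because $\gamma^W$ intertwines the Clifford action over the isometry $\gamma^B$, preserving $\End^k(W)$, and because $\nabla$-equivariance gives $\nabla_X\sigma^\gamma=(\nabla_{(\gamma^B)_*^{-1}X}\sigma)^\gamma$), and combine this with the same filtration shift by $l$. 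The two proofs share the essential input — the bound $o^{Q,W,0}(\sigma^\gamma)\leq o^{Q,W,0}(\sigma)+l$ from $\gamma^W|_Q\in\Gamma(\End^l(W)|_Q)$ — but your route avoids leaning on Proposition~\ref{p:taylororder} as a black box, at the cost of a conjugation lemma of comparable length; the paper's route is shorter given that Proposition~\ref{p:taylororder} has already been established, and it gives a more geometric picture (tracking what happens to synchronous frames under $\gamma$). One small point worth making explicit in your write-up: the bijectivity of $T\mapsto T^{\gamma^{-1}}$ is what licenses replacing the infimum over $T$ with the infimum over $T^{\gamma^{-1}}$ in the final step, so this should be stated (it follows since $\gamma$, $\gamma^B$, $\gamma^W$ are all invertible, with the inverse conjugation $T\mapsto T^\gamma$).
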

\begin{proof}
It suffices to show that if $\sigma$ has scaling order $k$ then $\sigma^\gamma$ has scaling order at most $k+l$. By Proposition \ref{p:taylororder} we may use Taylor order. Since $\gamma$ is a diffeomorphism fixing $Q$, it preserves vanishing order of functions along $Q$ and maps any chosen Euler-like section $\E$ to some other Euler-like section $\E'$. Since the lifts $\gamma^B$, $\gamma^W$ preserve $\nabla$, \eqref{e:gammasbW} sends $\E$-synchronous sections to $\E'$-synchronous sections. If $\sigma$ is $\E$-synchronous with $j^*\sigma \in \Gamma((j^*W)^k)$, then the $\E'$-synchronous section $\sigma^\gamma$ is uniquely determined near $Q$ by its restriction to $Q$, 
\[ j^*(\sigma^\gamma)=(\gamma^W|_Q)(j^*\sigma) \in \Gamma(\End^l(j^*W))\cdot \Gamma((j^*W)^k)\subset \Gamma((j^*W)^{k+l}).\]
Thus Taylor order increases by at most $l$ as required. 
\end{proof}

Restricting to $u=0$ we obtain the map
\begin{equation}
\label{e:dgamma}
\scr{M}(\dW)\rightarrow \scr{M}(\dW), \qquad \dsigma \mapsto \dsigma^{\dgamma},
\end{equation}
where $\dsigma^{\dgamma}(X)=\dgamma^W\dsigma(\dgamma^{-1}X)$, and $\dgamma^W$ is the induced degree $l$ map on the associated graded.

\section{Fixed point formula for the equivariant trace pairing}\label{s:fixedptcalc}
In this section we combine Theorem \ref{t:t0term} and the rescaled bundle construction from Section \ref{s:rescaling} to compute the equivariant trace pairing $\tau^\gamma(\tn{ind}(D))$. 

\subsection{Rescaled bundle for the fixed point formula}
We specialize the results of Section \ref{s:rescaling} to the case
\[ P=G, \quad Q=M^\gamma, \quad B=\delta A, \quad W=\delta E,\]
where $(j^*W)^k=\End^k(E)$, see Example \ref{ex:fixedptexample1}. Here and below we abuse notation by using $j$ to denote both the inclusion $M^\gamma \hookrightarrow M$ and the inclusion $M^\gamma \hookrightarrow G$, when it will be clear from context which is meant. We shall use a simplified notation for the corresponding spaces and vector bundles:
\[ \sbG_\gamma=\sbN_GM^\gamma, \quad \dG_\gamma=\dN_GM^\gamma, \quad \sbE_\gamma=\sbW, \quad \dE_\gamma=\dW.\] 
The associated graded 
\begin{equation}
\label{e:assocgr}
\gr(j^*W)=j^*\gr(\End(E))\simeq j^*(\wedge A^*_\bC\otimes \End_{\Cl}(E)),
\end{equation}
with the standard grading on the exterior algebra.

Recall that if $X,Y \in \Gamma(A)$ then we have used the notation $X\oplus Y$ for the section $X^R\oplus Y^L$ of $B=\delta A$; we use similar simplifications elsewhere, for example if $m \in M^\gamma$ then $X_m\oplus Y_m:=(X^R\oplus Y^L)_m$. We shall need the following.
\begin{proposition}
\label{p:vepXY}
Let $X,Y \in \Gamma(A)$ and $m \in M^\gamma$. Then
\[ \vep_{X_m\oplus Y_m}=\exp(-\tfrac{1}{4}\la X_m|R|Y_m\ra)\vep_m\exp(\dnabla_{\dY\oplus \dY})\exp(\dnabla_{\dX-\dY\oplus \dz}).\]
\end{proposition}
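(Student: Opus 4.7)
The plan is to write $X\oplus Y = (Y\oplus Y) + ((X-Y)\oplus 0)$ as a sum in $\delta A$ and apply the BCH identity \eqref{e:bch} with $\dX_1=\dY\oplus\dY$, $\dX_2=(\dX-\dY)\oplus\dz$. Rearranging this gives
\begin{equation*}
\exp(\dnabla_{\dX\oplus\dY}) = \exp\bigl(-\tfrac{1}{2}\dR(\dY\oplus\dY,(\dX-\dY)\oplus\dz)\bigr)\,\exp(\dnabla_{\dY\oplus\dY})\,\exp(\dnabla_{(\dX-\dY)\oplus\dz}),
\end{equation*}
so after precomposing with $\vep_m$ it suffices to identify the scalar factor $\vep_m\circ\exp(-\tfrac{1}{2}\dR(\dY\oplus\dY,(\dX-\dY)\oplus\dz))$ with $\exp(\tfrac{1}{4}\la Y_m|R|X_m\ra)\,\vep_m$.

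To compute the curvature I would use the identity $\dR=\dF$ from the proposition immediately preceding this one, where $F$ denotes the curvature of the Clifford $\delta A$-connection on $\delta E$. Since the connection \eqref{e:deltaAconn} is a tensor product of pullbacks of $\nabla$ to the two factors of $\delta E = r^*E\otimes s^*E^*$, a short Leibniz calculation gives, for $T\in\delta E|_{M^\gamma} = \End(E)|_{M^\gamma}$,
\begin{equation*}
F^{\delta E}(X_1\oplus Y_1,\,X_2\oplus Y_2)\,T = F^E(X_1,X_2)\circ T - T\circ F^E(Y_1,Y_2).
\end{equation*}
Substituting $(X_1,Y_1) = (Y,Y)$ and $(X_2,Y_2) = (X-Y,0)$ and using the antisymmetry of $F^E$, the right-hand term vanishes and only the left-multiplication term $F^E(Y,X)\circ T$ remains. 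In the Getzler-rescaled limit the lower-order contribution $F^{E/S}$ drops off, so the endomorphism $\dR(\dY\oplus\dY,(\dX-\dY)\oplus\dz)$ of $\dE_\gamma$ acts by rescaled left Clifford multiplication by the Riemann element $R(Y,X)\in\so(A_m)$.

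Proposition \ref{p:iso} then converts this left Clifford action into left exterior multiplication on $\gr\End(E)|_m\simeq\wedge A^*_m\otimes\End_{\Cl}(E)|_m$ by the $2$-form associated to $R(Y,X)$ under the metric identification $\so(A)\simeq\wedge^2 A^*$. Using the pair-swap symmetry \eqref{e:swapsymmetry2} of the Riemann tensor, this $2$-form equals $-\tfrac{1}{2}\la Y_m|R|X_m\ra$, the factor $\tfrac{1}{2}$ arising through the symbol of the Clifford quantization $\tau(A)=\tfrac{1}{4}\sum h(Ae_i,e_j)c(e^i)c(e^j)$. Hence $\exp(-\tfrac{1}{2}\dR)$ evaluates through $\vep_m$ to left multiplication by the even form $\exp(\tfrac{1}{4}\la Y_m|R|X_m\ra)$, which can be pulled to the left as in the statement. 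The main subtlety is the bookkeeping of the factor $\tfrac{1}{4}$, which combines a $\tfrac{1}{2}$ from BCH with a $\tfrac{1}{2}$ from the Clifford-to-exterior correspondence, together with a sign that must be reconciled through \eqref{e:swapsymmetry2}.
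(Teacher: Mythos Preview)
Your approach is correct and uses the same core ingredient as the paper, namely the truncated BCH identity \eqref{e:bch}. The paper takes a slightly longer route: it first writes $\dnabla_{\dX\oplus\dY}=\dnabla_{\dz\oplus\dY}+\dnabla_{\dX\oplus\dz}$, splits the exponential (checking that these commute), then further decomposes $\dnabla_{\dz\oplus\dY}=\dnabla_{\dY\oplus\dY}+\dnabla_{-\dY\oplus\dz}$ and splits again, before finally applying \eqref{e:bch} to the pair $(-\dY\oplus\dz,\,\dX\oplus\dz)$. Your single-step decomposition $(\dY\oplus\dY)+((\dX-\dY)\oplus\dz)$ short-circuits this, at the cost of having to compute the $\delta A$-curvature $\dR(\dY\oplus\dY,(\dX-\dY)\oplus\dz)$ directly via the tensor-product formula for $F^{\delta E}$; the paper instead reduces everything to the curvature evaluated on pairs of the form $\bullet\oplus\dz$, where the answer is visibly left Clifford multiplication by the $A$-curvature. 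Both routes produce the same Clifford element $R(X_m,Y_m)/2$ before passing to the exterior algebra. One caution: your intermediate claim that the $2$-form associated to $R(Y,X)$ via the Clifford symbol is $-\tfrac{1}{2}\la Y_m|R|X_m\ra$ has a sign that does not obviously follow from \eqref{e:swapsymmetry2} (which gives $+\tfrac{1}{2}\la Y_m|R|X_m\ra$); since you flag this bookkeeping as the main subtlety, just be sure the two halves and the sign are tracked carefully in a final write-up.
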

\begin{proof}
Using the properties developed in Section \ref{s:rescaling}:
\begin{align*}
\vep_{X_m\oplus Y_m}&=\vep_m\exp(\dnabla_{\dz\oplus \dY}+\dnabla_{\dX\oplus \dz})\\
&=\vep_m\exp(\dnabla_{\dz\oplus \dY})\exp(\dnabla_{\dX\oplus \dz}) \\
&=\vep_m\exp(\dnabla_{\dY\oplus \dY}+\dnabla_{-\dY\oplus \dz})\exp({\dnabla_{\dX\oplus \dz}}) \\
&=\vep_m\exp(\dnabla_{\dY\oplus \dY})\exp(\dnabla_{-\dY\oplus \dz})\exp({\dnabla_{\dX\oplus \dz}}) \\
&=\vep_m\exp(\dnabla_{\dY\oplus \dY})\exp(\dR(\dX,\dY)/2)\exp(\dnabla_{\dX-\dY\oplus \dz})\\
&=\vep_m\exp(\dR(\dX,\dY)/2)\exp(\dnabla_{\dY\oplus \dY})\exp(\dnabla_{\dX-\dY\oplus \dz}),
\end{align*}
where in lines 5, 6 we used \eqref{e:bch}. After evaluation at $m$, $\dR(\dX,\dY)$ becomes $R(X_m,Y_m)_{[2]}$, the induced degree $2$ operator on the associated graded, which in turn is the image of $R(X_m,Y_m)$ under the composition $\mf{so}(A_m)\rightarrow \Cl^{[2]}(A_m)\rightarrow \wedge^2 A_m$. This map differs from the standard map $\mf{so}(A_m)\rightarrow \wedge^2A_m$ induced by the metric by a factor of $2$ (cf. \cite[Chapter 3]{BerlineGetzlerVergne}). Using the symmetry \eqref{e:swapsymmetry}, \eqref{e:swapsymmetry2}, we have $\vep_m\exp(\dR(\dX,\dY)/2)=\exp(-\tfrac{1}{4}\la X_m|R|Y_m\ra)\vep_m$.
\end{proof}

\subsection{Rescaled generalized Laplacian}
The short exact sequence
\[ 0 \rightarrow j^*A \rightarrow \dN_GM^\gamma \rightarrow \dN_MM^\gamma \rightarrow 0 \]
of vector bundles over $M^\gamma$ has a canonical splitting $\dN_MM^\gamma \rightarrow \dN_GM^\gamma$ induced by the inclusion $M\hookrightarrow G$. Thus elements of $\dN_GM^\gamma$ can be represented as pairs
\[ (X,V), \quad \text{where} \quad X \in (j^*A)_m,\, V \in \dN_MM^\gamma_m, \, m \in M^\gamma.\]
In our situation the short exact sequence \eqref{e:normalbundlesequence} specializes to
\[ 0 \rightarrow j^!(\delta A)\rightarrow j^*\delta A \rightarrow \dN_GM^\gamma \rightarrow 0.\]
Choose the splitting $\dN_GM^\gamma \rightarrow \delta A$ (see \eqref{e:orthogsplitting}) given by
\begin{equation} 
\label{e:splitting}
(X,V)\in \dN_GM^\gamma \mapsto (X+V)\oplus V \in j^*\delta A,
\end{equation}
where $V \in \dN_MM^\gamma$ is identified with an element of $j^*A$ via the isomorphism $\dN_MM^\gamma\simeq (j^!A)^\perp$ determined by the metric. The composition $j^*A\rightarrow \dN_GM^\gamma \rightarrow j^*\delta A$ sends $X$ to $X\oplus 0$, tangent to the $s$ fibers. On the other hand \eqref{e:splitting} sends $(0,V)\in \dN_MM^\gamma \mapsto V\oplus V \in j^*\delta A$, and $\delta \rho(V\oplus V)=V^R-V^L$ is tangent to the unit space $M$.

By Proposition \ref{p:iso} (and equation \eqref{e:assocgr}), there is an isomorphism $\vep$ from $\scr{M}(\dE_\gamma)$ to the space of sections of
$j^*(\wedge A^*_\bC\otimes \End_{\Cl}(E))$ that are polynomial along the fibers of the projection map $p\colon \dN_GM^\gamma \rightarrow M^\gamma$. Specializing Proposition \ref{p:rescaledcovder} and using \eqref{e:swapsymmetry2}, we have that under $\vep$ the operator $\dnabla_{\dX\oplus \dz}$ is sent to
\begin{equation} 
\label{e:rescaledcovder}
\partial_X-\frac{1}{4}R|X\ra,
\end{equation}
where $R|X\ra$ denotes the bundle endomorphism of $p^*j^*(\wedge A_\bC^*\otimes \End_{\Cl}(E))$ which, at the point $(Y,V) \in j^*A\oplus \dN_MM^\gamma$, is left exterior multiplication by $\la Y+V|R|X\ra$.

By the Lichnerowicz formula \eqref{e:Lichnerowicz}, the generalized $A$-Laplacian $\Delta=D^2$ has Getzler order $2$. Hence the operator $u^2\Delta$ is well-defined on $\scr{M}(\sbE_\gamma)$, and induces an operator $\dDelta$ on $\scr{M}(\dE_\gamma)$. Equations \eqref{e:rescaledcovder}, \eqref{e:Lichnerowicz}, \eqref{e:Bochner} yield the following. Let $R_{ij}=\la \partial_{X^i}|R|\partial_{X^j}\ra$ be the matrix entries of $R$.
\begin{corollary}
Under the isomorphism given in Proposition \ref{p:iso}, the operator $\dDelta$ is
\[ \dDelta=-\sum_{i=1}^n \Big(\frac{\partial}{\partial X^i}+\frac{1}{4}\sum_{j=1}^n R_{ij}(X^j+V^j)\Big)^2+F^{E/S} \]
in local linear orthonormal coordinates $(X^i,V^j)$ along the fibers of $\dG_\gamma\simeq j^*A\oplus \dN_MM^\gamma$.
\end{corollary}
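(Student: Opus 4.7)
The plan is to substitute the Lichnerowicz formula \eqref{e:Lichnerowicz} for $\Delta=D^2$, identify which summands survive to Getzler order $2$, and then apply the already-established expression \eqref{e:rescaledcovder} for the rescaled covariant derivatives. Viewing $\Delta$ as acting on sections of $\delta E$ through the right-invariant lift in the first factor, equations \eqref{e:Lichnerowicz} and \eqref{e:Bochner} give
\[
\Delta = -\sum_{i=1}^n \bigl(\nabla_{e_i\oplus 0}^2 - \nabla_{(\nabla_{e_i}e_i)\oplus 0}\bigr) + \tfrac{\kappa(R)}{4} + c(F^{E/S})
\]
in a local orthonormal frame $e_1,\dots,e_n$ of $(A,h)$.

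Next I would record the Getzler orders of the four summands. A single covariant derivative has Getzler order at most $1$, so $\nabla_{e_i\oplus 0}^2$ has Getzler order at most $2$ and $\nabla_{(\nabla_{e_i}e_i)\oplus 0}$ at most $1$; the scalar curvature $\kappa(R)/4$ is a bundle endomorphism of filtered order $0$, hence Getzler order $0$; and by \eqref{e:barF} together with the identification $\so(A)\simeq \Cl^{[2]}(A^*)$, the term $c(F^{E/S})$ is a section of $\Cl^2(A^*)\otimes \End_{\Cl}(E)=\End^2(\delta E)|_{M^\gamma}$, and so has Getzler order at most $2$. By the definition of the rescaled bundle, any summand of Getzler order strictly less than $2$ contributes zero to $\dDelta$ after multiplication by $u^2$ and restriction to the special fibre $\dG_\gamma$. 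Hence only the two order-$2$ terms survive, giving
\[
\dDelta = -\sum_{i=1}^n \dnabla_{\bm{e}_i\oplus\dz}^2 + \bigl(c(F^{E/S})\bigr)_{[2]},
\]
and by Proposition \ref{p:iso} the degree-$2$ associated graded of $c(F^{E/S})$ is exactly left exterior multiplication by the $\End_{\Cl}(E)$-valued $2$-form $F^{E/S}$, abbreviated $F^{E/S}$ in the statement.

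To finish, I would substitute \eqref{e:rescaledcovder}, which gives $\dnabla_{\bm{e}_i\oplus\dz} = \partial_{e_i} + \tfrac{1}{4} R|e_i\ra$ under the isomorphism $\vep$. The splitting \eqref{e:splitting} sends $(e_i,0)\in j^*A\oplus \dN_MM^\gamma$ to $e_i\oplus 0\in j^*\delta A$, so $\partial_{e_i}$ becomes the coordinate derivative $\partial/\partial X^i$; at the point $(X,V)\in j^*A\oplus \dN_MM^\gamma$, the endomorphism $R|e_i\ra$ is left exterior multiplication by $\la X+V|R|e_i\ra = -\sum_{j=1}^n (X^j+V^j)R_{ij}$, with $R_{ij}$ the skew-symmetric matrix of curvature $2$-forms in the chosen frame. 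Squaring, summing over $i$, and combining with the $F^{E/S}$ term then yields the claimed formula. The only real subtlety is the bookkeeping around the splitting \eqref{e:splitting}: because it sends $(0,V)$ to $V\oplus V$ rather than $V\oplus 0$, the coordinate $V^j$ appears additively alongside $X^j$ in the curvature term, exactly as displayed in the statement.
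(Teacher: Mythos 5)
Your proof is correct and is exactly the reasoning the paper intends: the paper gives no separate proof, simply citing equations \eqref{e:rescaledcovder}, \eqref{e:Lichnerowicz}, \eqref{e:Bochner}, and your argument fills in the details faithfully, including the Getzler-order bookkeeping and the sign flip from the skew-symmetry of $R_{ij}$ that reconciles the $+\tfrac14 R|e_i\ra$ in \eqref{e:rescaledcovder} with the $-\tfrac14\sum_j R_{ij}(X^j+V^j)$ in the statement.
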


\subsection{Rescaled trace}\label{s:rescaledtrace}
Recall that the trace $\tau$ is given by restriction to the unit space $M\subset G$ followed by pairing with a generalized section $\tau \in \Gamma^{-\infty}(\Lambda^{-1}\otimes \Lambda_M)$ which is $\Gamma(A)$-invariant \eqref{e:PsiInv}. By transversality (Proposition \ref{p:transversefixedpt}), the image of $\rho(\Gamma(A|_{M^\gamma}))$ under the quotient map $TM|_{M^\gamma}\rightarrow \dN_MM^\gamma$ generates the normal bundle to $M^\gamma$. Thus equation \eqref{e:PsiInv} implies that $\tau$ is smooth in directions co-normal to $M^\gamma$. In particular by the wavefront set condition, the pullback $j^*\tau\in \Gamma^{-\infty}(j^*(\Lambda^{-1}\otimes \Lambda_M))$ is well defined. Let $p \colon \dN_MM^\gamma\rightarrow M^\gamma$ denote the projection, and let
\[ \bm{\tau}=p^*j^*\tau\in \Gamma^{-\infty}(p^*j^*(\Lambda^{-1}\otimes \Lambda_M)).\]
Recall that a short exact sequence of vector bundles
\[ 0 \rightarrow V_1\rightarrow V \rightarrow V_2 \rightarrow 0 \]
induces a canonical isomorphism $|\det(V)|\simeq |\det(V_1)|\otimes |\det(V_2)|$, independent of a choice of splitting. Thus $p^*j^*\Lambda_M$ is canonically identified with $\bm{\Lambda}_M=|\det(T\dN_MM^\gamma)|$, the density bundle for the total space of the normal bundle. Similarly $p^*j^*\Lambda$ is canonically identified with $\bm{\Lambda}=|\det(p^*j^*A)|$. It follows that $\bm{\tau}$ may be regarded as a generalized section of $\bm{\Lambda}^{-1}\otimes \bm{\Lambda}_M$, and so determines a linear functional $\bm{\tau}$ on the space of Schwartz sections of $\bm{\Lambda}$ over $\dN_MM^\gamma$.

In Section \ref{s:schwartz} we introduced the space of Schwartz functions near $u=0$ on the deformation to the normal cone space $\sbN_PQ$, and we now apply this in the case $P=M$, $Q=M^\gamma$. The metric $h$ on $A$ determines a trivialization of $\Lambda$, hence also of its pullback $\pi^*\Lambda$ to $\sbN_MM^\gamma$, and so we may speak of Schwartz sections of $\pi^*\Lambda$.
\begin{lemma}
\label{l:n1vanishing}
Let $s \in \Gamma(\pi^*\Lambda)$ be Schwartz near $u=0$ with support contained in $\sbN_NM^\gamma \subset \sbN_MM^\gamma$, where $N$ is a tubular neighborhood of $M^\gamma$ such that the connected components of $N$ have disjoint closures. Let $n_1\colon N\rightarrow \bZ$ be the (locally constant) codimension of $N$. Then
\[ \lim_{u\rightarrow 0} \tau(u^{-n_1}s(u))=\bm{\tau}(s(0)).\]
\end{lemma}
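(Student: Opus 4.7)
The plan is to transfer the problem from $\sbN_MM^\gamma$ to $\bR\times\dN_MM^\gamma$ via the open embedding $\iota$ from equation \eqref{e:iota}, exploit the smoothness of $\tau$ in directions normal to $M^\gamma$ (a consequence of the $A$-invariance \eqref{e:PsiInv}), and pass to the limit $u\to 0$ under a change of variables on the normal fibers.

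First, since the connected components of $N$ have disjoint closures, I would work in each connected component separately and thereby assume $n_1$ is constant. The tubular embedding $e\colon N \hookrightarrow \dN_MM^\gamma$ determined by $N$ induces $\iota \colon \sbN_NM^\gamma \hookrightarrow \bR\times \dN_MM^\gamma$, and by Lemma \ref{l:SchwartzProperties} the pushforward $\tilde s = \iota_* s$ is Schwartz on $[-1,1]\times \dN_MM^\gamma$; moreover, in the tubular identification, $s(u,V) = \tilde s(u, V/u)$ for $u\neq 0$, with $V \in N$ regarded as an element of $\dN_MM^\gamma$ via $e$.

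Next, I would extract the smoothness of $\tau$ in directions normal to $M^\gamma$. By Proposition \ref{p:transversefixedpt} together with the splitting \eqref{e:jsplit}, sections $X\in \Gamma(A)$ with $X|_{M^\gamma}\in \Gamma((j^!A)^\perp)$ have anchors spanning the normal directions, and the invariance \eqref{e:PsiInv} becomes a first-order linear ODE along the flows of $\rho(X)$. Hence, in tubular coordinates $(m,V)$ on $N$, $\tau$ is smooth in $V$ with distributional dependence on $m$ and is uniquely determined by $j^*\tau = \tau|_{V=0}$. Under the canonical identifications $p^*j^*\Lambda \simeq \bm{\Lambda}$ and $p^*j^*\Lambda_M \simeq \bm{\Lambda}_M$, the rescaled family $V\mapsto \tau(m, uV)$ then extends continuously in $u\in \bR$ to a family of generalized sections of $\bm{\Lambda}^{-1}\otimes \bm{\Lambda}_M$ on $\dN_MM^\gamma$, with limit $\bm{\tau}$ at $u=0$, the continuity being in the weak-$*$ topology against Schwartz sections of $\bm{\Lambda}$.

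Finally, a change of variables $V = uW$ on the fiber (Jacobian $|u|^{n_1}$, which absorbs the prefactor $u^{-n_1}$ as we are working with densities) produces
\begin{equation*}
\tau\bigl(u^{-n_1} s(u)\bigr) = \bigl\langle \tau(m, uW),\ \tilde s(u, m, W)\bigr\rangle,
\end{equation*}
where the right-hand side denotes the distributional pairing on $\dN_MM^\gamma$ between the generalized section from the previous step and the Schwartz test section. As $u\to 0$, $\tilde s(u,\cdot)$ converges to $s(0,\cdot)$ in the Schwartz topology on $\dN_MM^\gamma$, while $\tau(\cdot, u\cdot)$ converges weakly to $\bm{\tau}$ by step two, and combining these yields $\lim_{u\to 0}\tau(u^{-n_1}s(u)) = \bm{\tau}(s(0))$. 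The delicate point to verify carefully is that the line-bundle trivialization arising from the invariance ODE matches, at $V = 0$, the canonical identifications $p^*j^*(\Lambda^{-1}\otimes\Lambda_M)\simeq \bm{\Lambda}^{-1}\otimes \bm{\Lambda}_M$, so that the $V\to 0$ limit of $\tau$ produces $\bm{\tau}$ with no residual twist; once this bookkeeping is done, the rest is a routine combination of Schwartz continuity with weak-$*$ convergence.
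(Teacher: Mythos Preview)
Your outline is correct and close in spirit to the paper's proof, but the paper extracts a sharper structural fact from the invariance \eqref{e:PsiInv} that makes the limit trivial and resolves precisely the ``delicate point'' you flag at the end.

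Specifically, after localizing (via a partition of unity on $M^\gamma$) to the case where $N\simeq M^\gamma\times V$ is a trivial bundle, the paper invokes the splitting theorem for Lie algebroids (Section~\ref{s:splitting}) to obtain $A|_N\simeq j^!A\times TV$, and in particular an Euler-like section $\E$ with $\rho(\E)=\R$ the Euler vector field on $N$. Choosing the trivialization $\Xi=\Xi_{j^!A}\boxtimes|\d^{n_1}v|$ of $\Lambda|_N$ and writing $\tau|_N=\Xi^{-1}\otimes\mu$, the invariance $\L_\E\tau=0$ becomes the \emph{exact homogeneity} $\L_\R\mu=n_1\mu$. Consequently $\iota_*(u^{-n_1}\mu)=\mu$ on the nose for $u\ne 0$, so the only limit to take is in the Schwartz test function $\iota_*f(u)\to\iota_*f(0)$, where $f=\Xi^{-1}\otimes s$.

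By contrast, you use the invariance only to deduce smoothness of $\tau$ in the normal variable and then run a double limit (weak-$*$ convergence of the rescaled distribution together with Schwartz convergence of $\tilde s$). This works, but the bookkeeping you postpone---matching the ODE-trivialization with the canonical identifications $p^*j^*(\Lambda^{-1}\otimes\Lambda_M)\simeq\bm{\Lambda}^{-1}\otimes\bm{\Lambda}_M$---is exactly what the paper's choice of $\Xi$ and the homogeneity computation accomplish. The paper's route buys you: no simultaneous limits, an explicit identification $\bm{\tau}=\Xi^{-1}\otimes\mu$ at $u=0$, and no residual line-bundle ambiguity to chase.
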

\begin{proof}
Throughout the argument we identify $N$ with the normal bundle to $M^\gamma$ in $M$, and write $p \colon N \rightarrow M^\gamma$ for the vector bundle projection. Using a finite partition of unity on $M^\gamma$ to localize further, we may assume without loss of generality that $M^\gamma$ has a single connected component and $N$ is a trivial vector bundle, $N\simeq M^\gamma \times V$ for a vector space $V$ of dimension $n_1$. Since $\rho \pitchfork TM^\gamma$ (Proposition \ref{p:transversefixedpt}), the splitting theorem for Lie algebroids (see Section \ref{s:splitting}) gives a local model
\begin{equation} 
\label{e:locmod}
A|_N\simeq j^!A\times TV.
\end{equation}
Let $\R$ be the Euler vector field for $N=M^\gamma\times V$, which we promote to an Euler-like section $\E$ of $A|_N$ using \eqref{e:locmod}; trivially $\rho(\E)=\R$.

By \eqref{e:locmod}, $\Lambda|_N=|\det(A^*|_N)|\simeq |\det((j^!A)^*)|\boxtimes (V\times |\det(V^*)|)$. All three line bundles are trivial. Choose a trivialization $\Xi_{j^!A}$ of $|\det((j^!A)^*)|$ and a volume form $\d^{n_1}v$ on $V$, and let $\Xi=\Xi_{j^!A}\boxtimes|\d^{n_1}v|$ be the corresponding trivialization of $\Lambda|_N$. Since $\L_\R(\d^{n_1}v)=n_1\d^{n_1}v$ we have $\L_\E(\Xi)=n_1\Xi$.

Let
\[ f=\Xi^{-1}\otimes s \in C^\infty(\sbN_NM^\gamma) \]
be the corresponding smooth function, which is Schwartz near $u=0$. Since $\Xi$ is non-vanishing, we may factor $\tau|_N$ as
\[ \tau|_N=\Xi^{-1}\otimes \mu \] 
where $\mu \in \Gamma^{-\infty}(\Lambda_N)$. By equation \eqref{e:PsiInv},
\begin{equation} 
\label{e:homogeneity}
\L_{\rho(\E)}\mu=n_1\mu,
\end{equation}
hence $\mu$ is homogeneous of degree $n_1$ for scalar multiplication on the fibers of $N\rightarrow F$. In particular $\mu$ is a tempered generalized section.

The map $\iota$ from \eqref{e:iota} is a diffeomorphism from $\sbN_NF$ to $\bR\times N$. By Lemma \ref{l:SchwartzProperties} (a), $\iota_*(f)|_{[-1,1]\times N}$ is Schwartz. By \eqref{e:homogeneity}, $\iota_\ast(u^{-n_1}\mu)=\mu$ for $u \ne 0$. Thus
\[ \lim_{u\rightarrow 0} u^{-n_1}\tau(s(u))=\lim_{u\rightarrow 0} \mu(\iota_*f(u))=\mu(\lim_{u\rightarrow 0}\iota_*f(u))=\mu(\iota_*f(0))=\bm{\tau}(s(0)),\]
where the second equality follows because $u \in [-1,1]\mapsto \iota_*f(u)$ is a continuous family of Schwartz functions on $N$ while $\mu$ belongs to the topological dual, and the third equality follows because under the identifications above $\bm{\tau}=\Xi^{-1}\otimes \mu$.
\end{proof}

Let $\tr_s^M \colon \Gamma(\delta E)\rightarrow C^\infty(M)$ denote the composition of restriction to the unit space $M\subset G$ with the fiberwise supertrace of $\End(E)$. Recall that the latter vanishes identically on $\End^{n-1}(E)$. It follows from Proposition \ref{p:taylororder} that $\tr_s^M$ has filtration order $-n$, where $\Gamma(\delta E)$ is filtered by scaling order $o^{sc}$ and $C^\infty(M)$ is filtered by vanishing order $o^{van}$. Therefore there is an induced map of the Rees constructions (and of $\scr{A}(\sbG_\gamma)$-modules),
\begin{equation} 
\label{e:unvanishing1}
\tr_s^{\sbN_MM^\gamma}:=u^{-n}\tr_s^M \colon \scr{M}(\sbG_\gamma)\rightarrow \scr{A}(\sbN_MM^\gamma).
\end{equation}
Restricting to the fiber $\dG_\gamma=u^{-1}(0)$, we obtain a map
\begin{equation} 
\label{e:unvanishing2}
\tn{\textbf{tr}}^{\dN_MM^\gamma}_s\colon \scr{M}(\dG_\gamma)\rightarrow \scr{A}(\dN_MM^\gamma),
\end{equation}
which is simply the induced degree $-n$ map between the associated graded modules. We extend these maps $C^\infty$-linearly to smooth sections.

The bisection $\gamma$ of $G$ determines a bisection $\sbN_\gamma M^\gamma$ (the deformation to the normal cone for $M^\gamma$ inside the submanifold $\gamma \subset G$) of the Lie groupoid $\sbN_GM^\gamma=\sbG_\gamma$. Its intersection with $\dN_GM^\gamma=u^{-1}(0)$ is $\dgamma=\dN_\gamma M^\gamma$, a bisection of the Lie groupoid $\dN_GM^\gamma \rightrightarrows \dN_MM^\gamma$. Combining $\dgamma$ with the trace $\bm{\tau}$ yields the $\dgamma$-twisted trace $\bm{\tau}^{\dgamma}$, a continuous linear functional on the convolution algebra of the Lie groupoid $\dN_GM^\gamma$ built using Schwartz half-densities.

The generalization of the $\dgamma$-twisted trace to the situation with coefficients in $E$ involves one additional complication. Recall that by assumption there is an auxiliary invertible section $\gamma^E \in \Gamma(\delta E|_\gamma)$. At first glance one wants to extend the action of $(\gamma,\gamma^E)$ to the rescaled bundle $\sbE_\gamma\rightarrow \sbG_\gamma$ as in equation \eqref{e:gammasbW}, for a suitable scaling exponent $l$ (see equation \eqref{e:gammasbW}). By Propositions \ref{p:transversefixedpt}, \ref{p:gammaAaction}, $\gamma^E|_{M^\gamma} \in \End^{n_1}(E|_{M^\gamma})$, since its $\bC l(A^*|_{M^\gamma})$-component is the quantization (in the sense of \cite[Chapter 2]{BerlineGetzlerVergne}) of $\gamma^A|_{M^\gamma}$, and the latter acts trivially on $j^!A$ and with no non-zero fixed vectors on $\dN_MM^\gamma\simeq (j^!A)^\perp$. This suggests that the correct scaling exponent is $l=n_1$. But $M^\gamma$ may have multiple components of different dimensions in which case the codimension $n_1$ is only a locally constant function $M^\gamma \rightarrow \bZ$, and setting $l=n_1$ in \eqref{e:gammasbW} does not make sense.

To handle this issue one can proceed as follows. Choose a $\tilde{\gamma}$-invariant tubular neighborhood $N$ of $M^\gamma$ such that the connected components of $N$ have disjoint closures. Let $Q=M^\gamma\subset P=s^{-1}(N)$ and note that the diffeomorphism of $G$ given by left multiplication by $\gamma$ preserves the open subset $P$ and fixes $Q$. The pair $(\gamma,\gamma^E)$ determine an automorphism of the vector bundle $W=\delta E|_P\rightarrow P$, compatible with the connection and Clifford module structure. Extend $n_1$ in the obvious way to a function $N\rightarrow \bZ$ which is constant on each connected component. Then we may apply \eqref{e:gammasbW} instead to $P$, $Q$, $W$, $(\gamma,\gamma^E)|_P$ as above. Since the $u^{-1}(0)$ fiber of $\sbN_PM^\gamma$ is canonically identified with that of $\sbN_GM^\gamma$, this suffices to define the induced pair $(\dgamma,\dgamma^E)$ for $\dE_\gamma \rightarrow \dG_\gamma$. If $\sigma \in \Gamma(\dE_\gamma\otimes \bm{\rho}^*\bm{\Lambda})$ is such that $\tn{\textbf{tr}}_s^{\dN_MM^\gamma}(\sigma^{\dgamma})$ is a Schwartz section of $\bm{\Lambda}$, then we define
\[ \bm{\tau}^{\dgamma}_s(\sigma)=\bm{\tau}(\tn{\textbf{tr}}^{\dN_MM^\gamma}_s(\sigma^{\dgamma})), \qquad \sigma^{\dgamma}(g)=\dgamma^E\sigma(\gamma^{-1}g).\]
\begin{corollary}
\label{c:traceextends}
Let $\sigma$ be a smooth section of $\sbE_\gamma \otimes \pi_G^*\delta\Lambda^{1/2}$ such that the support of $\sigma(u)$ is sufficiently close to the unit space for all $u\ne 0$, and such that $\tr_s^{\sbN_MM^\gamma}(\sigma^\gamma)$ is Schwartz near $u=0$. Then
\begin{equation} 
\label{e:traceextends}
\lim_{u\rightarrow 0}u^{-n}\tau_s^\gamma(\sigma(u))=\bm{\tau}_s^{\dgamma}(\sigma(0)).
\end{equation}
\end{corollary}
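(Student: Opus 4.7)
The plan is to reduce the claim to Lemma \ref{l:n1vanishing} via the decomposition $n = n_0 + n_1$, where $n_0 = \dim M^\gamma$ and $n_1$ is the (locally constant) codimension of $M^\gamma$ in $M$. First I would identify how the rescaled $\gamma$-action on $\scr{M}(\sbE_\gamma)$ relates to the pointwise twist $\sigma(u)\mapsto\sigma(u)^\gamma$ used in the definition of $\tau_s^\gamma$. By Propositions \ref{p:transversefixedpt} and \ref{p:gammaAaction}, $\gamma^E|_{M^\gamma}\in\Gamma(\End^{n_1}(E|_{M^\gamma}))$, so the rescaled action \eqref{e:gammasbW} equals $u^{n_1}$ times the pointwise action. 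Combined with $\tr_s^{\sbN_MM^\gamma}=u^{-n}\tr_s^M$ and setting $s(u):=\tr_s^{\sbN_MM^\gamma}(\sigma^\gamma)(u)$, at each $u\ne 0$ one has
\[ s(u)=u^{-n_0}\tr_s^M(\sigma(u)^\gamma),\]
which is Schwartz near $u=0$ by hypothesis, and whose value at $u=0$ is $\tn{\textbf{tr}}_s^{\dN_MM^\gamma}(\sigma(0)^{\dgamma})$ by the very construction of $\tn{\textbf{tr}}_s^{\dN_MM^\gamma}$ as the $u=0$ fiber of $\tr_s^{\sbN_MM^\gamma}$.

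Next I would rewrite the left-hand side using the factorization $u^{-n}=u^{-n_1}\cdot u^{-n_0}$:
\[ u^{-n}\tau_s^\gamma(\sigma(u))=\tau\bigl(u^{-n}\tr_s^M(\sigma(u)^\gamma)\bigr)=\tau\bigl(u^{-n_1}s(u)\bigr).\]
The support hypothesis on $\sigma$, together with the observation that $\sigma(u)^\gamma(g)=\gamma^E\sigma(u)(\gamma^{-1}g)$ is supported near the bisection $\gamma\subset G$ (and hence, after restriction to $M$, near $M^\gamma=\gamma\cap M$), ensures that $s$ has support contained in $\sbN_NM^\gamma$ for a tubular neighborhood $N$ of $M^\gamma$ whose connected components have disjoint closures---precisely the setup of Lemma \ref{l:n1vanishing}.

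Applying Lemma \ref{l:n1vanishing} to $s$ then yields $\lim_{u\to 0}\tau(u^{-n_1}s(u))=\bm{\tau}(s(0))$, and combining with the identification of $s(0)$ above we obtain $\lim_{u\to 0} u^{-n}\tau_s^\gamma(\sigma(u))=\bm{\tau}(\tn{\textbf{tr}}_s^{\dN_MM^\gamma}(\sigma(0)^{\dgamma}))=\bm{\tau}_s^{\dgamma}(\sigma(0))$ by definition. The main obstacle is the careful bookkeeping of the several powers of $u$: one must verify that the $u^{n_1}$ hidden in the rescaled $\gamma$-action on $\sbE_\gamma$ cancels one factor in $u^{-n}$, leaving precisely the exponent $u^{-n_1}$ matching the degree of homogeneity of $\tau$ in the fiber directions of $\dN_MM^\gamma$ (itself a consequence of the $\Gamma(A)$-invariance \eqref{e:PsiInv} used in the proof of Lemma \ref{l:n1vanishing}). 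Once this accounting is clear, the statement is an essentially formal consequence of that lemma.
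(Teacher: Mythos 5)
Your proof is correct and follows essentially the same route as the paper's: reduce to Lemma \ref{l:n1vanishing} by tracking the three factors of $u$ — the $u^{n_1}$ from the rescaled $\gamma$-twist \eqref{e:gammasbW}, the $u^{-n}$ from the rescaled supertrace \eqref{e:unvanishing1}, and the remaining $u^{-n_1}$ which Lemma \ref{l:n1vanishing} absorbs — and use the support hypothesis (via Remark \ref{r:suppKtgamma}) to localize to a tubular neighborhood $N$ where $n_1$ is constant. One small slip in your setup: $n_0$ is not $\dim M^\gamma$ but $\operatorname{rank}(j^!A) = n - n_1$, where $n = \operatorname{rank}(A)$ (these coincide only when $A = TM$); since your computation only uses $n = n_0 + n_1$, the argument is unaffected.
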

\begin{proof}
By sufficiently close we mean more precisely that $\tn{support}(\sigma(u)^\gamma)\cap M\subset N$ for all $u \ne 0$, where $N$ is the tubular neighborhood of $M^\gamma$ chosen above. In this case $\sigma(u)^\gamma|_M$ equals the extension-by-zero of $\sigma(u)^\gamma|_N$ from $N$ to $M$; below we abuse notation slightly by using the same expression $\sigma(u)^\gamma|_N$ for this extension-by-zero. Recall $n_1$ is constant on each connected component of $N$. By equation \eqref{e:gammasbW}, Lemma \ref{l:n1vanishing}, and equations \eqref{e:unvanishing1}, \eqref{e:unvanishing2}, the left hand side of \eqref{e:traceextends} is 
\[ \lim_{u\rightarrow 0} u^{-n}\tau(\tr_s(\sigma(u)^\gamma|_M))=\lim_{u\rightarrow 0} u^{-n}\tau(u^{-n_1}\tr_s(u^{n_1}\sigma(u)^\gamma|_N))=\bm{\tau}_s^{\dgamma}(\sigma(0)),\]
as claimed.
\end{proof}

\subsection{Rescaled asymptotic heat kernel}
Recall the asymptotic heat kernel $K_t$ introduced in Section \ref{s:Borel}, which served as an approximate solution of the heat equation for the generalized Laplacian $\Delta$. For $u \ne 0$, the family of operators $K_{tu^2}$, $t>0$ plays the analogous role for the rescaled operator $u^2\Delta$ (see also Remark \ref{r:weightedGetzler} below). Note that $K_t$ was well-defined for all $t \in (0,\infty)$, and the substitution $t\mapsto tu^2$ is a reparametrization of $(0,\infty)$. One has
\begin{equation} 
\label{e:Ku}
K_{tu^2}(X,m)=\chi q_{tu^2}(X)\sum_{i=0}^\infty\beta(b_itu^2)t^iu^{2i}\Theta_i(X,m)\nu_m^{1/2},
\end{equation}
\begin{equation} 
\label{e:qu}
q_{tu^2}(X)=(4\pi tu^2)^{-n/2}e^{-|X|^2/4tu^2}\d X^{1/2}, 
\end{equation}
and the coefficients $\Theta_i$ are the unique smooth solutions of the recursive system
\begin{equation} 
\label{e:recursive}
(\nabla_\R+i)\Theta_i(-,m)=-B\Theta_{i-1}(-,m), \qquad \Theta_0(X,m)=\id,
\end{equation}
with $\R$ the Euler vector field of $A$ and $B=\d X^{-1/2}\Delta \d X^{1/2}$.

As in the classical setting, it follows from \eqref{e:recursive} that $\Theta_i$ has scaling order $-2i$, hence $u^{2i}\Theta_i$ extends smoothly to the $u=0$ fiber and we obtain a section $\dTheta_i$ at $u=0$. As the smallest possible scaling order is $-n$, one has $\dTheta_i=0$ for $2i>n$. 
\begin{proposition}
\label{p:dTheta0}
$\dTheta_0(X,V)=\exp\big(-\tfrac{1}{4}\langle X|R|V\rangle\big)$.
\end{proposition}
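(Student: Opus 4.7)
The plan is to characterise the polynomial section $\vep(\dTheta_0)$ on $\dN_GM^\gamma$ as the unique solution to a rescaled radial ODE, and then verify that the exponential ansatz $\exp(\tfrac14\langle X|R|V\rangle)$ solves it.

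Under the isomorphism $\vep$ of Proposition~\ref{p:iso}, $\dTheta_0\in\scr{M}(\dE_\gamma)$ corresponds to a polynomial section of $p^*\gr(j^*\End(E))=p^*j^*(\wedge A^*_\bC\otimes\End_{\Cl}(E))$ over $\dN_GM^\gamma=j^*A\oplus\dN_MM^\gamma$. First I would translate the defining equation $\nabla_{\E\oplus\dz}\Theta_0=0$ (radial parallel transport along $s$-fibres for an Euler-like $\E\in\Gamma(r^*A)$) together with the boundary condition $\Theta_0|_M=\id$ into the rescaled setting, yielding a first-order radial ODE for $\vep(\dTheta_0)$ with constant term $\id$.

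The ODE can be read off from Proposition~\ref{p:rescaledcovder}: for $X\in j^*A$ the operator $\dnabla_{\dX\oplus\dz}$ acts under $\vep$ as $\partial_X+\tfrac12 R(-,X\oplus\dz)_{[2]}$, while for $V\in\dN_MM^\gamma$ (identified with $(j^!A)^\perp\subset j^*A$ via the metric) the splitting~\eqref{e:splitting} embeds $V$ as $V\oplus V\in j^*\delta A$, so $\dnabla_{\dV\oplus\dV}$ acts as $\partial_V+\tfrac12 R(-,V\oplus V)_{[2]}$. The direct-sum structure of the $\delta A$-connection on $\delta E$ reduces the left-Clifford part of these curvatures to $R^A$ acting through the left Clifford action on $E$. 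I would then verify that the ansatz $P(X,V)=\exp(\tfrac14\langle X|R|V\rangle)$ has the correct boundary value $P(0,0)=\id$ and is annihilated by the rescaled radial operator, using~\eqref{e:swapsymmetry2} together with the factor-$2$ discrepancy between $\mf{so}(A)\to\wedge^2A^*$ (the standard metric identification) and the composition $\mf{so}(A)\to\Cl^{[2]}\to\gr^2\Cl$ that is noted in the proof of Proposition~\ref{p:vepXY}. In the classical non-equivariant case $V=0$ the formula specialises to $\dTheta_0=\id$, recovering the BGV computation~\cite[Section~4.1]{BerlineGetzlerVergne} on a single $s$-fibre.

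The hard part will be the clean derivation of the rescaled radial ODE. The Euler-like section $\E$ lives in the $r^*A$ slot of $\delta A$ and encodes a radial structure adapted to $M\hookrightarrow G$, whereas the deformation $\sbN_GM^\gamma$ simultaneously rescales the $s$-fibre directions $j^*A$ and the normal-in-$M$ directions $\dN_MM^\gamma$. Disentangling these two deformations via the splitting theorem (Section~\ref{s:splitting}) and tracking the curvature contributions through the rescaled covariant derivatives of Proposition~\ref{p:rescaledcovder} --- so that the $V$-dependent contributions assemble into $\langle X|R|V\rangle$ with the correct coefficient --- is where the geometric content of the calculation lies.
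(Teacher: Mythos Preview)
Your ODE-and-ansatz strategy is viable, but it is a longer road than the one the paper takes, and the difficulty you flag as ``the hard part'' is precisely what the paper sidesteps by using Proposition~\ref{p:vepXY}. You never invoke that proposition, and it is the key tool here.

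The paper's argument is essentially a one-line evaluation. By the splitting~\eqref{e:splitting}, $\dTheta_0(X,V)=\vep_{(X+V)\oplus V}\dTheta_0$. Proposition~\ref{p:vepXY} (which is itself just a BCH computation with the operators $\dnabla_{\dX\oplus\dz}$, $\dnabla_{\dz\oplus\dY}$) factors this evaluation as
\[
\vep_{(X+V)\oplus V}=\exp\!\big(\tfrac14\langle V|R|X+V\rangle\big)\,\vep_m\,\exp(\dnabla_{\dV\oplus\dV})\,\exp(\dnabla_{\dX\oplus\dz}).
\]
The two exponentials on the right are tailored to the two defining properties of $\Theta_0$: the factor $\exp(\dnabla_{\dX\oplus\dz})$ moves in the $s$-fibre radial direction, where $\Theta_0$ is parallel, so it acts trivially on $\dTheta_0$; the factor $\exp(\dnabla_{\dV\oplus\dV})$ moves along $M$ (since $\delta\rho(V\oplus V)=V^R-V^L$ is tangent to $M$), where $\Theta_0\equiv\id$, so after applying $\vep_m$ one gets $\id$. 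Only the curvature prefactor survives, and antisymmetry of $\langle\,\cdot\,|R|\,\cdot\,\rangle$ kills the $\langle V|R|V\rangle$ term.

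Your plan instead rescales the defining radial equation $\nabla_\R\Theta_0=0$ directly. This does work: since $\R$ vanishes on $M\supset M^\gamma$, the operator $\nabla_\R$ extends to $\scr{M}(\sbE_\gamma)$ with no power of $u$, and at $u=0$ one finds (via Proposition~\ref{p:rescaledcovder} and \eqref{e:rescaledcovder}) the operator $X\cdot\partial_X+\tfrac14\langle V|R|X\rangle$ acting on polynomial sections over $\dN_GM^\gamma$; the ansatz $\exp(\tfrac14\langle X|R|V\rangle)$ visibly solves this with the correct value $\id$ at $X=0$. But to make this rigorous you still need to justify the boundary condition along $\{X=0\}$, i.e.\ that $\dTheta_0(0,V)=\id$ for all $V$---and the clean way to see that is exactly the $\exp(\dnabla_{\dV\oplus\dV})$ argument from the paper's approach. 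So in the end you would be reproducing the content of Proposition~\ref{p:vepXY} piecemeal. The paper's route is shorter because it isolates that BCH manipulation once, in Proposition~\ref{p:vepXY}, and then the present proposition becomes an immediate corollary.
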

\begin{proof}
By Proposition \ref{p:vepXY} and noting that $\la V|R|V\ra=0$ by antisymmetry of $R$, we find
\begin{align*}
\dTheta_0(X,V)&=\vep_{X+V\oplus V}\dTheta_0\\
&=\exp(-\tfrac{1}{4}\la X+V|R|V\ra)\vep_m \exp(\dnabla_{\dV\oplus \dV})\exp(\dnabla_{\dX\oplus \dz)})\dTheta_0\\
&=\exp(-\tfrac{1}{4}\la X|R|V\ra)\vep_m\exp(\dnabla_{\dV\oplus \dV})\dTheta_0\\
&=\exp(-\tfrac{1}{4}\la X|R|V\ra)\vep_m\dTheta_0\\
&=\exp(-\tfrac{1}{4}\la X|R|V\ra)\id
\end{align*}
where to obtain the third line we used that $\nabla_{\dX\oplus \dz}\Theta_0=0$, and to obtain the fourth line we used that $\Theta_0|_M$ is constant equal to $\id$.
\end{proof}

As $u \rightarrow 0$, $\beta(b_itu^2)\rightarrow 1$. The function $|X|^2$ has scaling order $2$ hence $|X|^2/u^2$ extends smoothly to the function $|X|^2$ on the $u=0$ fiber, where $(X,V)\in A|_{M^\gamma}\oplus \dN_MM^\gamma$. To handle the factor $(4\pi t)^{-n/2}$ in \eqref{e:qu}, we multiply by an additional factor of $u^n$. The product $u^nK_{tu^2}$ then extends smoothly to a section $\dK_t$ over the $u=0$ fiber.

\begin{proposition}
\label{p:dKt}
$\dK_t=\dK_t(X,V)$ is the solution to the heat equation for $\dDelta$ given by
\[ (4\pi t)^{-n/2}\tn{det}^{1/2}\bigg(\frac{tR/2}{\sinh(tR/2)}\bigg)\exp\Big(-\tfrac{1}{4t}\big\langle X\big|\tfrac{tR}{2}\coth(\tfrac{tR}{2})\big|X\big\rangle-tF^{E/S}-\tfrac{1}{4t}\big\langle X\big|tR\big|V\big\rangle\Big).\]
\end{proposition}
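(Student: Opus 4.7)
The proof strategy is to show that $\dK_t$ satisfies an initial value problem for the generalized Laplacian $\dDelta$ that admits a unique solution, and then to check that the right hand side of the proposition is also such a solution.

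First I would derive the exact heat equation $(\partial_t + \dDelta)\dK_t = 0$. Corollary \ref{c:AHKeqn} gives $(\partial_t + \Delta^R)K_t = O(t^\infty)$; substituting $t \mapsto tu^2$ and multiplying by the Getzler weight $u^n$ yields
\[
(\partial_t + u^2 \Delta^R)(u^n K_{tu^2}) = u^{n+2} \bigl[(\partial_s + \Delta^R) K_s\bigr]_{s = tu^2}.
\]
Since $\Delta$ has Getzler order $2$ by Lichnerowicz, the operator $u^2 \Delta$ preserves the Rees module $\scr{M}(\sbE_\gamma)$ and restricts to $\dDelta$ at $u = 0$; the error on the RHS vanishes to infinite order as $u \to 0^+$, yielding the exact equation for $\dK_t$. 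Gaussian concentration of the factor $q_{tu^2}$ in \eqref{e:Ku}, combined with Proposition \ref{p:dTheta0} evaluated at $X = 0$, then shows that $\dK_t$ tends to a delta distribution concentrated at the zero section of $\dG_\gamma \to M^\gamma$ with leading coefficient $\id$ as $t \to 0^+$.

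Next I would identify the right hand side $H_t$ of the proposition as a second solution of the same problem. Treating $V$ as a parameter, the substitution $Y = X + V$ conjugates $\dDelta$ to the standard generalized harmonic oscillator $-\sum(\partial/\partial Y^i - \tfrac{1}{4}\sum R_{ij} Y^j)^2 + F^{E/S}$, whose fundamental solution at source $Y' = 0$ and target $Y$ is the classical Mehler kernel \cite[Theorem 4.13]{BerlineGetzlerVergne}. Because our initial condition is concentrated at $X = 0$, that is, $Y = V$, we need the propagator with shifted source; translating back into the $(X,V)$ coordinates, the discrepancy between this shifted expression and the $V=0$ Mehler kernel collapses to precisely the correction factor $\exp(\tfrac{1}{4}\langle X|R|V\rangle)$. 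Here the antisymmetry identity $\langle V|R|V\rangle = 0$ eliminates spurious $V$-only quadratic terms, and the higher-order cross terms are absorbed via the functional identity $\tfrac{tR}{2}\coth(\tfrac{tR}{2}) - \tfrac{tR/2}{\sinh(tR/2)} = \tfrac{tR}{2}\tanh(\tfrac{tR}{4})$. A direct substitution then verifies $(\partial_t + \dDelta) H_t = 0$ with the same $t \to 0^+$ limit as $\dK_t$.

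The main obstacle is the algebraic bookkeeping that matches the shifted Mehler kernel to the explicit formula stated, in particular the careful use of the antisymmetry of $\langle\cdot|R|\cdot\rangle$ and the $\coth$/$\sinh$ identities above. Once that matching is in hand, uniqueness of Gaussian-decay solutions to the heat equation for $\dDelta$, which along each fiber of $\dG_\gamma \to M^\gamma$ is a generalized Laplacian with polynomial coefficients, forces $\dK_t = H_t$.
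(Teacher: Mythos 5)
Your overall strategy matches the paper's: both arguments show that $\dK_t$ becomes an exact solution of the heat equation for $\dDelta$ after rescaling, identify the stated formula $\Phi_t$ as another solution with the same recursive initial data (Proposition~\ref{p:dTheta0}), and conclude by uniqueness. Where you diverge is in the method for verifying that $\Phi_t$ solves $(\partial_t+\dDelta)\Phi_t=0$, and here the paper's route is both cleaner and more clearly correct.

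The paper conjugates rather than changing variables. Observe that $\delta_i=\tilde{\delta}_i-\tfrac14\sum_jR_{ij}V^j$ differs from $\tilde{\delta}_i$ by a zeroth-order term which, as a one-form in $X$ with $V$ held fixed, is exact with primitive $\pm\tfrac14\langle X|R|V\rangle$. Conjugating by $\exp(-\tfrac14\langle X|R|V\rangle)$ therefore intertwines $\delta_i$ with $\tilde{\delta}_i$ and hence $\dDelta$ with $\tilde{\dDelta}$, so $\Phi_t=\exp(\tfrac14\langle X|R|V\rangle)\tilde{\Phi}_t$ solves the heat equation for $\dDelta$ once one knows the standard source-at-the-origin Mehler kernel $\tilde{\Phi}_t$ from \cite[Chapter 4]{BerlineGetzlerVergne}. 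No shifted propagator formula is needed, and the correction factor appears exactly once, as the gauge transformation.

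Your proposal, by contrast, substitutes $Y=X+V$ and invokes ``the propagator with shifted source'' for the standard Mehler operator. Two issues. First, \cite[Theorem 4.13]{BerlineGetzlerVergne} gives the Mehler kernel only at source $Y'=0$; the off-diagonal propagator $p_t(Y,Y')$ for a magnetic harmonic oscillator is not a function of $Y-Y'$ alone but carries a gauge-dependent phase factor of the form $\exp(\tfrac14\langle Y|R|Y'\rangle)$. Your write-up never names this phase, yet it is precisely what produces the $\exp(\tfrac14\langle X|R|V\rangle)$ factor (via $\langle X+V|R|V\rangle=\langle X|R|V\rangle$, using $\langle V|R|V\rangle=0$). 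Second, the functional identity $\tfrac{tR}{2}\coth(\tfrac{tR}{2})-\tfrac{tR/2}{\sinh(tR/2)}=\tfrac{tR}{2}\tanh(\tfrac{tR}{4})$ you cite to ``absorb higher-order cross terms'' does not appear in the cleanest execution of either route; its appearance suggests you were trying to match a cross-term form of the Mehler propagator to the stated answer without isolating the gauge phase. So the approach is salvageable, but as written it relies on an off-diagonal Mehler formula not supplied by your reference and leaves the crucial matching unverified. If you want to keep the change-of-variables framing, you should derive or cite the full off-diagonal kernel with its phase, then the translation back to $(X,V)$ is a one-line computation. Otherwise, the conjugation trick is the path of least resistance.
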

\begin{remark}The formula in Proposition \ref{p:dKt} differs by the factor $\exp(-\tfrac{1}{4}\langle X|R|V\rangle)$ from the standard expression for Mehler's kernel. $F^{E/S} \in \Gamma(\wedge^2A^*\otimes \End_\Cl(E))$ was defined in \eqref{e:barF}.\end{remark}
\begin{proof}
Recall that $K_t$ satisfies the heat equation for $\Delta$ modulo an $O(t^\infty)$ error. Making the substitution $t\leadsto tu^2$, multiplying by $u^n$ and taking $u\rightarrow 0$, we deduce that $\dK_t$ becomes an exact solution of the heat equation for $\dDelta$.

Let
\[ \tilde{\delta}_i=\frac{\partial}{\partial X^i}+\frac{1}{4}\sum_{j=1}^n R_{ij}X^j, \quad \delta_i=\tilde{\delta}_i+\frac{1}{4}\sum_{j=1}^nR_{ij}V^j,\]
so that
\[ \dDelta=-\sum_{i=1}^n \delta_i^2+F^{E/S},\]
and let
\[ \tilde{\dDelta}=-\sum_{i=1}^n \tilde{\delta}_i^2+F^{E/S}. \]
Let $\Phi_t(X,V)$ denote the right hand side of the expression in Proposition \ref{p:dKt}, that is,
\[ \Phi_t(X,V)=\exp\big(-\tfrac{1}{4}\big\langle X\big|R\big|V\big\rangle\big)\tilde{\Phi}_t(X,V),\]
where $\tilde{\Phi}_t(X,V)$ is Mehler's kernel; then (cf. \cite[Chapter 4]{BerlineGetzlerVergne}),
\[ (\partial_t+\tilde{\dDelta})\tilde{\Phi}_t=0.\]
Since
\[ \delta_i \circ \exp\big(-\tfrac{1}{4}\big\langle X\big|R\big|V\big\rangle\big)=\exp\big(-\tfrac{1}{4}\big\langle X\big|R\big|V\big\rangle\big)\circ \tilde{\delta}_i,\]
one finds
\begin{align*}
(\partial_t+\dDelta)\Phi_t&=(\partial_t+\dDelta)\exp\big(-\tfrac{1}{4}\big\langle X\big|R\big|V\big\rangle\big)\tilde{\Phi}_t\\
&=\exp\big(-\tfrac{1}{4}\big\langle X\big|R\big|V\big\rangle\big)(\partial_t+\tilde{\dDelta})\tilde{\Phi}_t\\
&=0.
\end{align*}
Thus $\Phi_t$ is a solution of the heat equation for $\dDelta$.

The heat equation for $\dDelta$ may be solved recursively as in \eqref{e:recursive}, with the solution uniquely determined given $\dTheta_0$. Proposition \ref{p:dTheta0} shows that $\Phi_t$ has the correct initial term $\dTheta_0$ in the recursion, and this completes the proof.
\end{proof}

\begin{remark}
\label{r:weightedGetzler}
For a slightly more geometric of approach to the discussion above, note that one can pull back the rescaled bundle $\sbE_\gamma$ to the larger deformation space $\bR\times \sbN_GM^\gamma=\sbN_{\bR\times G}(\{0\}\times M^\gamma)$ and use the weighted structure on the Lie groupoid $\bR\times G$ where the coordinate $t \in \bR$ is assigned weight $2$ (as in Section \ref{s:heatparametrix}). The resulting vector bundle over the larger space $\bR\times \sbN_GM^\gamma$ provides a natural home for the Getzler rescalings of the heat operator $\partial_t+\Delta$ and its Getzler-rescaled limit as well as for its parametrix, whose Getzler-rescaled limit (when $\gamma=1$) is Mehler's kernel.
\end{remark}

\subsection{Calculation of the equivariant trace pairing}
Recall that by Theorem \ref{t:t0term}, the equivariant trace pairing $\pair{\tau}{\tn{ind}(D)}(\gamma)=\tau^\gamma(\tn{ind}(D))$ is the constant term in the asymptotic expansion of $\tau^\gamma_s(K_t)=\tau_s(K_t^\gamma)$ as $t\rightarrow 0^+$. The discussion in the previous section showed that $\sigma(u)=u^n K_{tu^2}$ extends smoothly to $\sbG_\gamma$, restricting to a smooth section $\dK_t$ over the fiber $u^{-1}(0)=\dG_\gamma$.

\begin{proposition}
\label{p:traceu0}
The equivariant trace pairing $\tau^\gamma(\tn{ind}(D))$ equals the constant term in the asymptotic expansion of $\bm{\tau}^\gamma_s(\dK_t)$ as $t\rightarrow 0^+$.
\end{proposition}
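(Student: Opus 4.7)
The plan is to apply Corollary~\ref{c:traceextends} to the Getzler-rescaled family $\sigma(u) := u^n K_{tu^2}$ (with $t>0$ held fixed as a parameter), viewed as a section of $\sbE_\gamma\otimes \pi_G^*\delta\Lambda^{1/2}$. The chain of equalities inside the proof of Theorem~\ref{t:t0term} actually delivers the stronger statement
\[
\tau^\gamma_s(K_{2t}) \;=\; \pair{\tau}{\tn{ind}(D)}(\gamma) + O(t^\infty) \quad \text{as } t\to 0^+,
\]
and the aim is to promote this to the identity $\bm{\tau}^\gamma_s(\dK_t) = \pair{\tau}{\tn{ind}(D)}(\gamma)$ for all $t>0$, from which the proposition is immediate (the constant term of a function constant in $t$ is the function itself).

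First I would check the hypotheses of Corollary~\ref{c:traceextends} for this $\sigma$. Smoothness of $\sigma$ as a section of $\sbE_\gamma\otimes \pi_G^*\delta\Lambda^{1/2}$ across $u=0$, with $\sigma(0)=\dK_t$, is precisely the content of the discussion culminating in Proposition~\ref{p:dKt}. The support condition $\tn{supp}(\sigma(u)^\gamma)\cap M \subset N$ is arranged uniformly in $u$ and $t$ by choosing the cutoff $\chi$ in the definition of $K_t$ with support sufficiently close to $M$, as in Remark~\ref{r:suppKtgamma}.

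The step requiring the most care---and the main obstacle---is the Schwartz condition near $u=0$ on $\tr_s^{\sbN_MM^\gamma}(\sigma^\gamma)$. Here I would use the explicit form \eqref{e:Ku}--\eqref{e:qu} of $K_{tu^2}$. Because $\gamma_1=\dN\tilde{\gamma}$ has no non-zero fixed vector (Proposition~\ref{p:gammaAaction}), in coordinates $(m,v)\in M^\gamma\times \dN_MM^\gamma$ near $M^\gamma$ the Gaussian factor $q_{tu^2}$ appearing in $K_{tu^2}^\gamma|_M$ behaves to leading order as $(4\pi tu^2)^{-n/2}\exp(-c|v|^2/tu^2)$ for some $c>0$, with polynomial-in-$v$ corrections contributed by the smooth coefficients $\Theta_i$ and the Borel cutoffs $\beta(b_i tu^2)$. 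Multiplication by $u^n$ and passage to the deformation-to-the-normal-cone coordinate $v\mapsto uv$ cancels the singular prefactor and leaves a smooth Gaussian of fixed width $\sqrt{t/c}$ in $v$. Combined with the intrinsic characterisation of Schwartz sections in Section~\ref{s:schwartz} and the model calculation of Example~\ref{ex:gaussian}, this yields the Schwartz property.

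With the hypotheses verified, Corollary~\ref{c:traceextends} gives $\lim_{u\to 0^+}\tau^\gamma_s(K_{tu^2}) = \bm{\tau}^\gamma_s(\dK_t)$. On the other hand, substituting $t\mapsto tu^2/2$ into the displayed $O(t^\infty)$ identity above and sending $u\to 0^+$ (with $t$ held fixed) forces $\lim_{u\to 0^+}\tau^\gamma_s(K_{tu^2}) = \pair{\tau}{\tn{ind}(D)}(\gamma)$. Comparing shows $\bm{\tau}^\gamma_s(\dK_t) = \pair{\tau}{\tn{ind}(D)}(\gamma)$ for every $t>0$; in particular its constant term as $t\to 0^+$ is the equivariant trace pairing, as required.
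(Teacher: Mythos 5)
Your proof is correct and follows essentially the same route as the paper: apply Corollary \ref{c:traceextends} to $\sigma(u)=u^nK_{tu^2}$, verifying its hypotheses via Remark \ref{r:suppKtgamma} and Example \ref{ex:gaussian}, and combine the resulting identity $\bm{\tau}^{\dgamma}_s(\dK_t)=\lim_{u\to 0}\tau^\gamma_s(K_{tu^2})$ with Theorem \ref{t:t0term}. Your only deviation is in the final step, where plugging the reparametrization directly into the $O(t^\infty)$ identity from the proof of Theorem \ref{t:t0term} yields the cleaner (and slightly stronger) intermediate fact that $\bm{\tau}^{\dgamma}_s(\dK_t)=\pair{\tau}{\tn{ind}(D)}(\gamma)$ is constant in $t$, sidestepping the paper's appeal to invariance of the $t^0$-coefficient under $t\mapsto tu^2$.
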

\begin{proof}
For any $t>0$, Remark \ref{r:suppKtgamma} and Example \ref{ex:gaussian} show that $\tr_s^{\sbN_MM^\gamma}(u^nK_{tu^2}^\gamma) \in C^\infty(\sbN_MM^\gamma)$ is Schwartz near $u=0$. By appropriate choice of cutoff function $\chi$ in the definition of $K_t$, the support of $K_t$ can be arranged to be arbitrarily close to the unit space. Thus by Corollary \ref{c:traceextends}, 
\begin{equation} 
\label{e:limu0}
\bm{\tau}^{\dgamma}_s(\dK_t)=\lim_{u\rightarrow 0} u^{-n}\tau^\gamma_s(u^nK_{tu^2})=\lim_{u\rightarrow 0}\tau^\gamma_s(K_{tu^2}).
\end{equation}
The substitution $t\mapsto tu^2$ preserves the $t^0$ term in the asymptotic expansion of $\tau^\gamma_s(K_t)$, hence taking asymptotic expansions of both sides of \eqref{e:limu0} as $t\rightarrow 0^+$ yields the result.
\end{proof}

The Levi-Civita $A$-connection is $\sf{K}$-invariant and hence preserves the orthogonal splitting
\[ A|_{M^\gamma}=j^!A\oplus (j^!A)^\perp\simeq j^!A \oplus \dN_MM^\gamma.\]
It follows that the pullback of the curvature $j^!R$ is block diagonal with blocks $R_0 \in \Gamma(\wedge^2 j^!A\otimes \mf{so}(j^!A))$ the curvature of $j^!A$, and $R_1 \in \Gamma(\wedge^2 j^!A \otimes \mf{so}(\dN_MM^\gamma))$ the curvature of the induced $j^!A$-connection on $\dN_MM^\gamma$. 

Let $|\nu| \in \Gamma(\Lambda_{\dN})$, $\Lambda_{\dN}=|\det(\dN_MM^\gamma)|$ be the density determined by the restriction of the metric $h$ to the subbundle $\dN_MM^\gamma\simeq (j^!A)^\perp$. Recall that the generalized section $\tau$ has a well-defined pullback $j^*\tau \in \Gamma^{-\infty}(j^*(\Lambda^{-1}\otimes \Lambda_M))$. Since
\[ j^*\Lambda_M\simeq \Lambda_{\dN}\otimes \Lambda_{M^\gamma}, \]
we may multiply $j^*\tau$ by $|\nu|^{-1}\in \Gamma(\Lambda_{\dN}^{-1})$ to obtain a generalized section
\[ \frac{j^*\tau}{|\nu|} \in \Gamma^{-\infty}(j^*\Lambda^{-1}\otimes \Lambda_{M^\gamma}), \]
for which there exists a natural pairing with smooth sections of $j^*\Lambda$. 

We come to our main theorem, which is a fixed point formula \eqref{e:fixedptformula} for the equivariant trace pairing. 
The notation in \eqref{e:fixedptformula} is similar to that used in \cite[Theorem 6.16]{BerlineGetzlerVergne} (to which it reduces in the case $A=TM$). The variables $n_0,n_1$ are the ranks of $j^!A,\dN_MM^\gamma$ respectively; they are locally constant, even integer-valued functions on $M^\gamma$ satisfying $n_0+n_1=n$, the rank of $A$. The characteristic forms $\Ahat(j^!A,j^!\nabla)$, $\tn{det}^{1/2}(1-\gamma_1 e^{-R_1})$ belong to $\Gamma(\wedge(j^!A)^*)$. Recall that $\gamma_1$ is an isometry with no non-zero fixed vector. Thus $\tn{det}(1-\gamma_1)>0$, and the analytic square-root is chosen such that $\tn{det}^{1/2}(1-\gamma_1)>0$. The same observation guarantees $\tn{det}^{1/2}(1-\gamma_1 e^{-R_1})\in \Gamma(\wedge(j^!A)^*)$ has non-vanishing degree $0$ component, hence is invertible. The characteristic form $\tn{\textbf{ch}}^\gamma(E/S,\nabla)\in \Gamma(\wedge (j^*A)^*)$ is the localized relative $A$-Chern character form (\cite[Definition 6.13]{BerlineGetzlerVergne} adapted to a general $A$); we will give its detailed definition in the course of proving the theorem, and for now only remark that in the case $E=S\otimes W$ where $S$ is the spinor bundle associated to a Spin double cover of the bundle of oriented orthonormal frames of $A$, then $\tn{\textbf{ch}}^{\gamma}(E/S,\nabla)=\tn{\textbf{ch}}^{\gamma}(j^*W,j^*\nabla)$ the $\gamma$-twisted Chern character of $j^*W$. Via the inclusion $j^!A\subset j^*A$, the quotient on the right hand side of the pairing in \eqref{e:fixedptformula} below may be regarded as a section of $\wedge(j^*A)^*\boxtimes \bC$ over $M^\gamma$. Taking its degree $n$ component results in a section of $\wedge^n (j^*A)^*\boxtimes \bC$, and using the orientation on $A$, the latter is isomorphic to $j^*\Lambda$. Thus there is a natural $\bC$-valued pairing between the objects on the right hand side of \eqref{e:fixedptformula} below.
\begin{theorem}
\label{t:fixedptformula}
Let $G$ be a Hausdorff Lie groupoid over a compact unit space $M$ with even rank oriented metrised Lie algebroid $(A,h)$. Let $\sf{K}$ be a compact Lie group that acts on $G$ by bisections preserving the metric and orientation on $A$. Let $\tau \colon \Psi^{-\infty}(G)\rightarrow \bC$ be a continuous trace that factors through restriction to the unit space. Let $(E,c,\nabla)$ be a $\sf{K}$-equivariant $\Cl(A^*)$-module with Clifford $A$-connection and let $D=c\circ \nabla$ be the corresponding $A$-Dirac operator. Let $\gamma \in \sf{K}$ and $j\colon M^\gamma \hookrightarrow M$ the inclusion. 
Then
\begin{equation} 
\label{e:fixedptformula}
\tau^\gamma(\tn{ind}(D))=\bigg\la \frac{j^*\tau}{|\nu|},\bigg(\frac{\Ahat(j^!A)\tn{\textbf{ch}}^{\gamma}(E/S)}{(2\pi \i)^{n_0/2}\i^{n_1/2}\tn{det}^{1/2}(1-\gamma_1 e^{-R_1})}\bigg)_{[n]}\bigg\ra.
\end{equation}
\end{theorem}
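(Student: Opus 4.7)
\ By Proposition \ref{p:traceu0} it suffices to extract the constant term in the $t\to 0^+$ asymptotic expansion of
$$\bm{\tau}^{\dgamma}_s(\dK_t)=\bm{\tau}\big(\tn{\textbf{tr}}^{\dN_MM^\gamma}_s(\dK_t^{\dgamma})\big),$$
where the rescaled Mehler-type kernel $\dK_t$ on the deformation space $\dG_\gamma$ is given explicitly in Proposition \ref{p:dKt}. The goal is thus (i) to evaluate $\dK_t^{\dgamma}$ along the unit space $\dN_MM^\gamma\subset\dG_\gamma$, (ii) to take the fiberwise Clifford supertrace, and (iii) to integrate the resulting Schwartz section against $\bm{\tau}=p^*j^*\tau$ in the manner of Corollary \ref{c:traceextends}.

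For step (i), I would exploit the $\sf{K}$-invariant orthogonal decomposition $A|_{M^\gamma}=j^!A\oplus\dN_MM^\gamma$ and the resulting block decompositions $R=R_0\oplus R_1$ and $\gamma^A|_{M^\gamma}=\id\oplus\gamma_1$ from Proposition \ref{p:gammaAaction}. The bisection $\dgamma\subset\dG_\gamma$ is the linearization of $\gamma$ along $M^\gamma$; evaluating $\dK_t^{\dgamma}$ at a unit $V\in\dN_MM^\gamma$ thus amounts to evaluating $\dK_t$ at the element of $\dG_\gamma$ whose $(X,V')$-coordinates, with respect to the splitting \eqref{e:splitting}, come out (by Proposition \ref{p:vepXY}) to $X=(\gamma_1^{-1}-\id)V$ and $V'=\gamma_1^{-1}V$. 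This is the infinitesimal form of the well-known geodesic-triangle holonomy at a $\gamma$-fixed point, cf.\ \cite[Lemma 3.3]{lafferty1992lefschetz}. Substituting this $X$ into Proposition \ref{p:dKt} and applying $\dgamma^E$ yields an explicit $\wedge A^*_\bC\otimes\End_{\Cl}(E)$-valued Gaussian in $V$ times $\exp(-tF^{E/S})$ and the usual Mehler determinant.

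For step (ii), the fiberwise Clifford supertrace factors through the Berezin projection on $\Cl(A^*)$ tensored with $\tr_{E/S}$ on $\End_{\Cl}(E)$. Its effect on the $j^!A$-factor is standard: the Mehler determinant $\det^{1/2}(tR_0/2/\sinh(tR_0/2))$ together with the twisted exponential $\exp(-tF^{E/S})$ produces $\Ahat(j^!A)\tn{\textbf{ch}}^\gamma(E/S)$ at order $t^0$. On the $\dN_MM^\gamma$-factor, $\dgamma^E|_{M^\gamma}$ is the Clifford quantization of $\gamma_1\in\SO(\dN_MM^\gamma)$ (Proposition \ref{p:gammaAaction}), and combining it with the Mehler quadratic form $\tfrac{tR_1}{2}\coth(\tfrac{tR_1}{2})$ and the holonomy shift $X=(\gamma_1^{-1}-\id)V$ produces a $V$-Gaussian $\exp(-\tfrac{1}{4t}\la V|Q_tV\ra)$ in which $Q_t$ is a symmetric matrix whose Pfaffian, following the identity used in \cite[Section 6.4]{BerlineGetzlerVergne}, combines with the Clifford Berezin of the $\gamma_1$-quantization to yield exactly $\det^{1/2}(1-\gamma_1 e^{-tR_1})$.

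For step (iii), pairing the Gaussian $\exp(-\tfrac{1}{4t}\la V|Q_tV\ra)$ times the polynomial in $V$ arising from the $X$-substitution against the tempered generalized section $\bm{\tau}$ in the manner of Lemma \ref{l:n1vanishing} localizes the integral to $M^\gamma$ and produces the pairing $\la j^*\tau/|\nu|,\,\cdot\ra$, with the $|\nu|^{-1}$ absorbing the Gaussian normalization volume and the factor $(2\pi\i)^{-n_0/2}\i^{-n_1/2}$ accounting for the standard Clifford/Berezin normalizations. Extracting the $t^0$ coefficient then yields \eqref{e:fixedptformula}. The principal technical obstacle is this last step: tracking the normalizations and verifying that the Gaussian integral together with the Clifford Pfaffian combines to give $\det^{1/2}(1-\gamma_1 e^{-R_1})^{-1}$ with the prescribed prefactor. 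The calculation is mechanical and closely parallels the classical BGV fixed-point computation, with the simplification that in the rescaled-bundle framework of Section \ref{s:rescaling} the vanishing of non-leading Getzler terms is automatic.
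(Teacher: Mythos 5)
Your proposal follows the paper's proof essentially step for step: reduce to the constant term of $\bm{\tau}^{\dgamma}_s(\dK_t)$ via Proposition \ref{p:traceu0}, substitute the Mehler-type formula of Proposition \ref{p:dKt}, compute the Clifford supertrace and fiberwise Gaussian integral over $\dN_M M^\gamma$, and pair the $t^0$ coefficient with $j^*\tau/|\nu|$ to obtain the characteristic-form expression. There is one small slip in step (i): unwinding $\vep_{(\gamma^A)^{-1}(X+V)\oplus V}\dK_t$ via the splitting \eqref{e:splitting} at $X=0$ gives coordinates $X'=(\gamma_1^{-1}-\id)V$ and $V'=V$, not $V'=\gamma_1^{-1}V$, because the $Y^L$-slot in \eqref{e:connvectgamma} is unaffected by left multiplication by $\gamma$; this discrepancy is fortuitously harmless, since the only $V'$-dependent term in $\dK_t$ is $\tfrac14\langle X'|R|V'\rangle$, and $\langle(\gamma_1^{-1}-1)V|R|V\rangle=\langle(\gamma_1^{-1}-1)V|R|\gamma_1^{-1}V\rangle$ by the antisymmetry of $\langle\cdot|R|\cdot\rangle$ together with $\langle W|R|W\rangle=0$.
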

\begin{proof}
By Proposition \ref{p:traceu0}, we compute the constant term in the asymptotic expansion of $\bm{\tau}^\gamma_s(\dK_t)=\bm{\tau}_s(\dK_t^\gamma)$ as $t\rightarrow 0^+$. By equation \eqref{e:connvectgamma},
\[ \vep_{(X+V)\oplus V}\dK_t^\gamma=\dgamma^E\vep_{(\gamma^A)^{-1}(X+V)\oplus V}\dK_t.\]
Therefore
\[ \dK_t^\gamma(X,V)=\dgamma^E\vep_{(\gamma^A)^{-1}(X+V)\oplus V}\dK_t=\dgamma^E\dK_t((\gamma^A)^{-1}(X+V)-V,V).\]
To simplify notation, for the rest of the proof we will write $\gamma$ instead of $\gamma^A$. The action of $\gamma^A$ on $j^*A$ was described in Proposition \ref{p:gammaAaction}.

Using Proposition \ref{p:dKt}, we obtain a complicated expression for $\bm{\gamma}^E\dK_t(\gamma^{-1}(X+V)-V,V)$. To compute the supertrace $\bm{\tau}_s$ we need only the value at $X=0$ of this expression, which simplifies to
\begin{align} 
\label{e:Ktgamma0}
\nonumber \dK_t^\gamma(0,V)=(4\pi & t)^{-n/2}\bm{\gamma}^E\tn{det}^{1/2}\bigg(\frac{tR/2}{\sinh(tR/2)}\bigg)\exp(-tF^{E/S})\\
& \exp\Big(-\tfrac{1}{4t}\langle (\gamma^{-1}V-V)|\tfrac{tR}{2}\coth(\tfrac{tR}{2})|(\gamma^{-1}V-V)\rangle-\tfrac{1}{4t}\langle \gamma^{-1}V|tR|V\rangle\Big).
\end{align}
By Corollary \ref{c:traceextends}, $\bm{\tau}_s(\dK_t^\gamma)$ is given as the evaluation of the generalized section $j^*\tau$ on
\begin{equation} 
\label{e:fiberint}
|\nu|^{-1}\int_{\dN_MM^\gamma/M^\gamma} \tr_s(\dK_t^\gamma(0,V)) |\nu|(\d^{n_1}V).
\end{equation}
Note that $\dN_MM^\gamma$ need not be an orientable vector bundle, hence the integration over the fibers is in the sense of densities. The integral over $V$ just involves the $V$-dependent exponential in \eqref{e:Ktgamma0}, and so we begin by computing this factor.

Recall that $\dN_MM^\gamma$ has even rank $n_1$. Locally on $M^\gamma$, $\dN_MM^\gamma$ splits into a direct sum of rank $2$ subbundles preserved by the action of $\gamma_1$ (see Proposition \ref{p:gammaAaction} for the definition of $\gamma_1$). The integral over the fibers \eqref{e:fiberint} becomes a product of integrals; to evaluate these we need the following.
\begin{lemma}
Let $1\ne \gamma \in SO(2)$. Then for $A \in \mf{so}(2)$ sufficiently small,
\[ \int_{\bR^2}e^{-\frac{1}{4}\langle (\gamma^{-1}V-V)|\frac{A}{2}\coth(\frac{A}{2})|(\gamma^{-1}V-V)\rangle-\frac{1}{4}\langle \gamma^{-1}V|A|V\rangle}\d^2 V=\frac{4\pi\cdot \tn{det}^{1/2}\Big(\frac{\sinh(A/2)}{A/2}\Big)}{\tn{det}^{1/2}(1-\gamma)\tn{det}^{1/2}(1-\gamma e^{-A})}.\]
\end{lemma}
\begin{proof}
In the proof we repeatedly use $\gamma^T=\gamma^{-1}$, $A^T=-A$ and $\gamma A=A\gamma$. The integrand is $\exp(-\frac{1}{4}\la V|B|V\ra)$ where
\[ B=(\gamma^{-1}-1)^T\frac{A}{2}\coth(\frac{A}{2})(\gamma^{-1}-1)+\gamma A.\]
We may replace $B$ with its symmetric part $C=(B+B^T)/2$ without changing the integrand, and a short calculation reveals
\[ C=\frac{A/2}{\sinh(A/2)}(-e^{A/2}\gamma^{-1})(1-\gamma)(1-\gamma e^{-A}).\]
When $A=0$ (and as $\gamma \ne 1$), $C$ is positive definite, hence remains positive definite for $A$ sufficiently small. Thus
\[ \int_{\bR^2} e^{-\frac{1}{4}\la V|C|V\ra}\d^2V=\frac{4\pi}{\tn{det}^{1/2}(C)},\]
and as $\det(-e^{A/2}\gamma^{-1})=1$, the lemma follows.
\end{proof}
Applying the lemma to the integral over the fibers of the exponential factor in \eqref{e:Ktgamma0} yields
\begin{equation}
\label{e:factor1s}
\frac{(4\pi t)^{n_1/2}|\nu|^{-1}}{\tn{det}^{1/2}(1-\gamma_1)\tn{det}^{1/2}(1-\gamma_1 e^{-tR_1})}\tn{det}^{1/2}\bigg(\frac{\sinh(tR_1/2)}{tR_1/2}\bigg).
\end{equation}
The remaining factors in \eqref{e:Ktgamma0} are
\begin{equation} 
\label{e:factor2}
(4\pi t)^{-n/2}\bm{\gamma}^E\tn{det}^{1/2}\bigg(\frac{tR/2}{\sinh(tR/2)}\bigg)\exp(-tF^{E/S}).
\end{equation}
Multiplying equations \eqref{e:factor1s} and \eqref{e:factor2} gives
\begin{equation}
\label{e:product12}
|\nu|^{-1}\tn{det}^{1/2}\bigg(\frac{tR_0/2}{\sinh(tR_0/2)}\bigg)\frac{(4\pi t)^{-n_0/2}\bm{\gamma}^E\exp(-tF^{E/S})}{\tn{det}^{1/2}(1-\gamma_1)\tn{det}^{1/2}(1-\gamma_1 e^{-tR_1})}.
\end{equation}
Next recall that the relevant term in \eqref{e:product12} is the coefficient of $t^0$. Since each of the elements $F^{E/S}$, $R_0$, $R_1$ of exterior degree $2$ appears with a factor of $t$, the coefficient of $t^0$ is obtained from the exterior degree $n_0$ part of the corresponding product of forms. Recall that the exterior degree of $\bm{\gamma}^E$ is bounded by $n_1$. Applying the supertrace picks out the term of exterior degree $n=n_0+n_1$ and multiplies it by $(2/\i)^{n/2}$, while taking the relative supertrace $\tr_s^{E/S}$ (\cite[Definition 3.28]{BerlineGetzlerVergne}) of the $\End_\Cl(E)$-component (see \cite[pp.146, 195]{BerlineGetzlerVergne}). Thus the calculation of \eqref{e:fiberint} has lead to 
\begin{equation}
\label{e:productn0}
(2\pi \i)^{-n_0/2}|\nu|^{-1}\Bigg(\tn{det}^{1/2}\bigg(\frac{R_0/2}{\sinh(R_0/2)}\bigg)\frac{2^{n_1/2}\tr^{E/S}_s(\bm{\gamma}^E_{[n_1]}\exp(-F^{E/S}))}{\i^{n_1/2}\tn{det}^{1/2}(1-\gamma_1)\tn{det}^{1/2}(1-\gamma_1 e^{-R_1})}\Bigg)_{[n]}.
\end{equation}
The product
\[ \frac{2^{n_1/2}\tr^{E/S}_s(\bm{\gamma}^E_{[n_1]}\exp(-F^{E/S}))}{\tn{det}^{1/2}(1-\gamma_1)}=\tn{\textbf{ch}}^{\gamma}(E/S,\nabla) \]
is, by definition (compare \cite[Definition 6.13]{BerlineGetzlerVergne}), the localized relative $A$-Chern character form. If we also substitute the definition of $\Ahat(j^!A,j^!\nabla)$, equation \eqref{e:productn0} gives finally
\[ \bigg(\frac{\Ahat(j^!A,j^!\nabla)\tn{\textbf{ch}}^{\gamma}(E/S,\nabla)}{(2\pi \i)^{n_0/2}\i^{n_1/2}\tn{det}^{1/2}(1-\gamma_1 e^{-R_1})}\bigg)_{[n]}|\nu|^{-1},\]
which is the expression appearing in the statement of the theorem.
\end{proof}

\section{Some extensions of the main result}
In this brief section we mention two modest extensions of the main result: to non-Hausdorff Lie groupoids, and to proper actions of non-compact Lie groups.

\subsection{Non-Hausdorff Lie groupoids}
Let $G$ be a Lie groupoid which is not necessarily Hausdorff. As is common in the literature on Lie groupoids, we require that the base $M$ as well as the $r$ and $s$ fibers are Hausdorff. By a `possibly non-Hausdorff Lie groupoid' we will mean a Lie groupoid satisfying these conditions. In this section we shall briefly indicate how the main theorem may be extended to this setting under some additional hypotheses.

We begin by mentioning a simple example of a non-Hausdorff Lie groupoid that is helpful to keep in mind.
\begin{example}
\label{ex:pushout}
Consider the pushout $G=\bR \cup_J \bR$ where $J\subset \bR$ is any non-empty open subset. View $G$ as a bundle of discrete groups over $M=\bR$ with fibers $\{1\}$ for $x\in J$ and $\bZ/2\bZ$ otherwise. The unit space $M$ as well as the $r$ and $s$ fibers are Hausdorff, but note for example that $M$ is not a closed subset of $G$.
\end{example}

Returning to the general situation, an important observation is that there is always an open neighborhood $U$ of $M$ in $G$ that is Hausdorff. Indeed each $s$ fiber $s^{-1}(m)$ is Hausdorff, and thus the Riemannian exponential map $\exp^h|_m$ gives a diffeomorphism from a neighborhood of $0 \in A_m$ to a neighborhood of $m$ in $s^{-1}(m)$. Altogether these assemble to a diffeomorphism $\exp^h$ from a neighborhood of the $0$ section in $A$ to a neighborhood $U$ of $M$ in $G$. Many of the constructions in the article are carried out locally on a neighborhood of $M$ in $G$, and hence go through without change. Note however that even though $M$ has a Hausdorff neighborhood, there may still be points $m \in M$ for which there exists some point $g \in G$ such that $m,g$ do not have disjoint neighborhoods.

The fixed point subsets of the action of a compact Lie group $\sf{K}$ on a possibly non-Hausdorff manifold $G$ are smooth, as in the Hausdorff case. To see this let $\gamma \in \sf{K}$, $g \in G^\gamma$, and let $\sf{K}_g$ be the stabilizer of $g$ under the action. By compactness of $\sf{K}_g$ and smoothness of the action, one can find Hausdorff open neighborhoods $U,V_1,...,V_n$ of $g$ and an open cover $\sf{K}_{g,1},...,\sf{K}_{g,n}$ of $\sf{K}_g$ such that $\sf{K}_{g,i}\cdot U\subset V_i$. Then $V=V_1\cap \cdots \cap V_n$ is a Hausdorff open neighborhood of $g$, and its inverse image under the action map $\sf{K}_{g,i}\times U\rightarrow V_i$ contains a product neighborhood of the form $\sf{K}_{g,i}'\times U'$, where $U'\subset U$ is a Hausdorff open neighborhood of $g$ and $\sf{K}_{g,i}'\subset \sf{K}_{g,i}$ are open subsets such that $\sf{K}_{g,1}',...,\sf{K}_{g,n}'$ still cover $\sf{K}_g$. Then $U''=\sf{K}_g\cdot U'\subset V$ is a Hausdorff open neighborhood of $g$ that is preserved by $\sf{K}_g$. $G^\gamma$ is smooth at $g$ because $(U'')^\gamma$ is smooth, being a fixed point subset for the action of a compact Lie group $\sf{K}_g$ on the Hausdorff manifold $U''$.

The definition of an action of $\sf{K}$ on $G$ by bisections is as before. Note that bisections $\gamma\subset G$, being diffeomorphic to $M$, are Hausdorff submanifolds of $G$. If $U$ is a Hausdorff neighborhood of $M$ in $G$ then $\gamma U$ is a Hausdorff neighborhood of $\gamma$ in $G$. The group action properties developed in Section \ref{s:groupactions} only involve local considerations in charts (smoothness and transversality being local properties for example), and thus apply in the non-Hausdorff setting as well. One important difference is that the fixed-point locus $M^\gamma$, although smooth, need not be a closed submanifold of $M$. In Example \ref{ex:pushout} there is a non-trivial $\bZ_2$ action by bisections that swaps the two copies of $\bR$, and hence $M^\gamma=J$ can be any open subset of $\bR$. To rule out such behavior we shall assume that the union $U\cup \gamma U$ is Hausdorff. 

The definition of the convolution algebra for a non-Hausdorff Lie groupoid associated to a foliation is explained by Connes \cite{connes1982survey}, and can be adapted to other non-Hausdorff Lie groupoids. In brief one defines $C^\infty_c(G)$ to be the vector space spanned by functions obtained by pushing forward a smooth compactly supported function in a chart and extending by $0$. One uses a similar definition for sections of vector bundles, for example $\Psi^{-\infty}(G)$. Elements of $C^\infty_c(G)$ need not be continuous and the space of such functions is not closed under pointwise multiplication. Nevertheless one has a well-defined convolution algebra. The larger algebra of pseudodifferential operators is defined similarly. As explained above, there is a Hausdorff tubular neighborhood $U$ of $M$ in $G$, and hence quantization of complete symbols can be carried out as in the Hausdorff setting. The reduced kernel of a general $G$-equivariant pseudodifferential operator is then a sum of a reduced kernel supported in $U$ and in the image of the quantization map, and an element of $\Psi^{-\infty}(G)$.

Let $F\subset M$ be the closure in $M$ of the set points $m \in M$ for which there exists some point $g \in G$ such that $m,g$ do not have disjoint neighborhoods. The restriction of an element of $C^\infty_c(G)$ or $\Psi^{-\infty}(G)$ to $M$ is smooth on $M\backslash F$, but can fail to be continuous at points in $F$ (this occurs in Example \ref{ex:pushout}). To ensure a well-defined trace pairing, we assume that there is a $G$-invariant open neighborhood $R\subset M$ of $F$ in $M$ such that $\tau|_R$ is a Radon measure (after choosing any smooth trivialization of $\Lambda_M\otimes \Lambda^{-1}|_R$) and $F$ has $\tau|_R$-measure $0$. If $\tau$ satisfies this condition then we shall say that $\tau$ is admissible. Let $\{\rho,1-\rho\}$ be a partition of unity on $M$ subbordinate to $\{R,M\backslash F\}$. If $f$ is the restriction of an element of $\Psi^{-\infty}(G)$ to $M$, we define
\[ \pair{\tau}{f}:=\pair{\tau|_{M\backslash F}}{(1-\rho)f}+\pair{\tau|_R}{\rho f}. \]
The discussion in Section \ref{s:asymptoticheat} is as in the Hausdorff situation, for example, the construction of the asymptotic heat kernel is unchanged. The assumption that $U\cup \gamma U$ is Hausdorff also ensures that Remark \ref{r:suppKtgamma} remains valid.

By assumption $M^\gamma$ is a closed submanifold of the Hausdorff manifold $G'=U\cup \gamma U$. Hence the deformation space $\sbN_{G'}M^\gamma$ can be defined using the Rees construction as in Section \ref{s:rescaling}. We then define $\sbN_G M^\gamma$ to be the pushout of $\sbN_{G'}M^\gamma$ and $(G\backslash M^\gamma)\times \bR^\times$ along $(G'\backslash M^\gamma)\times \bR^\times$. The definition of the rescaled bundle is similar. The calculations in Section \ref{s:fixedptcalc} only involve local considerations on the Hausdorff manifold $U\cup \gamma U$, and so proceed as before, leading to the following extension of Theorem \ref{t:fixedptformula}.
\begin{theorem}
\label{t:nonHausdorfffixedpt}
Consider the situation of Theorem \ref{t:fixedptformula} except that $G$ is a possibly non-Hausdorff Lie groupoid. Assume that $\tau$ is admissible and that there is a neighborhood $U$ of $M$ in $G$ such that $U\cup \gamma U$ is Hausdorff. Then the fixed-point formula \eqref{e:fixedptformula} holds.
\end{theorem}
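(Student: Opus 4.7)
The strategy is to reduce to the Hausdorff computation carried out in the proof of Theorem \ref{t:fixedptformula} by observing that every relevant distribution lives in the Hausdorff open subset $G' = U \cup \gamma U$ of $G$, and that admissibility of $\tau$ is precisely what is needed to define the trace pairing on sections which are only discontinuous on the non-Hausdorff locus $F \subset M$. First I would recall that the construction of the asymptotic heat kernel $K_t$ in Section \ref{s:asymptoticheat} is local near $M \subset G$, with support contained in $\exp^h(U)$ for a small enough disk bundle $U \subset A$; by further shrinking the cutoff $\chi$ if necessary, this support fits inside the Hausdorff neighborhood of $M$ provided by hypothesis. All identities used in the Borel-summation arguments, Corollaries \ref{c:AHKeqn}--\ref{c:AHKprop}, the construction of the parametrix $P_t$ for $D$, and finally Theorem \ref{t:t0term} then go through verbatim, as they manipulate compactly supported generalized sections concentrated near $M$.

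Next I would verify that $\tau^\gamma(K_t)$ remains well-defined. The support of $K_t^\gamma$ lies in $\gamma \cdot \supp(K_t) \subset \gamma U$, hence in $G'$, and its restriction to the unit space produces a section of $\Lambda_M \otimes \Lambda^{-1}$ which is smooth off $F$ and bounded near $F$. The admissibility of $\tau$---namely that $\tau|_R$ is a Radon measure with $\tau|_R(F) = 0$ for some $G$-invariant neighborhood $R$ of $F$---makes the pairing unambiguous via the partition-of-unity decomposition of the preamble. Traciality for operators supported in $G'$ follows by repeating the local argument of Section \ref{s:equivtracepairing} inside this Hausdorff piece, so Theorem \ref{t:t0term} still identifies $\tau^\gamma(\tn{ind}(D))$ with the constant term in the asymptotic expansion of $\tau_s^\gamma(K_t)$.

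The Getzler rescaling machinery of Section \ref{s:rescaling} and the fixed-point computation of Section \ref{s:fixedptcalc} are then transferred without essential change. Since $M^\gamma$ is closed in the Hausdorff manifold $G'$, the Rees-algebra construction of $\sbN_{G'} M^\gamma$ and the rescaled bundle $\sbE_\gamma \to \sbN_{G'} M^\gamma$ are defined exactly as before, and the pushout with $(G \setminus M^\gamma) \times \bR^\times$ described in the preamble extends these to global objects over $\sbN_G M^\gamma$. All rescaled differential operators, the rescaled asymptotic heat kernel $\dK_t$, and the local Mehler-type computation of Proposition \ref{p:dKt} depend only on data in an arbitrarily small neighborhood of $M^\gamma$ in $G'$, so the calculation in the proof of Theorem \ref{t:fixedptformula} yields the right-hand side of \eqref{e:fixedptformula} unchanged.

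The point where care is required is the analogue of Lemma \ref{l:n1vanishing}, which computes the $u \to 0$ limit of $u^{-n_1} \tau(s(u))$ by pairing the tempered generalized section $\iota_*(u^{-n_1}\tau|_N)$ against a continuous Schwartz-valued family of test functions. In the Hausdorff case this used that $\tau|_N$ is a tempered generalized section; here $\tau$ is only assumed to be a Radon measure near $F$, which is weaker. The plan is to split $s(u)$ using a partition $\{\rho, 1-\rho\}$ subordinate to $\{R, M \setminus F\}$: on $M \setminus F$ the original argument of Lemma \ref{l:n1vanishing} applies unchanged, and on $R$ the Radon-measure property of $\tau|_R$ combined with the uniform Schwartz bounds on $s(u)$---which decay faster than any power of $|V|$ on the fibers of $\dN_M M^\gamma$, so uniformly dominate $u$-independent integrable functions---permits dominated convergence to conclude the limit interchange. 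With this adaptation Corollary \ref{c:traceextends} and Proposition \ref{p:traceu0} carry over, and the remaining fiber-Gaussian computation is identical to that in Theorem \ref{t:fixedptformula}.
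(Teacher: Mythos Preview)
Your proposal is correct and follows the same approach as the paper. The paper provides no separate proof environment for Theorem \ref{t:nonHausdorfffixedpt}; the preamble of Section 7 \emph{is} the argument, and it asserts exactly what you outline: the asymptotic heat kernel construction, the parametrix, and Theorem \ref{t:t0term} are local near $M$ and go through unchanged; admissibility of $\tau$ makes the trace pairing well-defined on the possibly discontinuous restrictions; $M^\gamma$ is closed in the Hausdorff manifold $G'=U\cup\gamma U$, so the Rees/deformation-space constructions and the rescaled bundle are built there and extended by pushout; and the calculations of Section \ref{s:fixedptcalc} take place entirely in $G'$.

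Your extra care with Lemma \ref{l:n1vanishing} goes beyond what the paper does, and is in fact more than is needed. Two points simplify matters. First, the test section $K_t^\gamma|_M$ is genuinely smooth on $M$, not merely bounded near $F$: it is the restriction to $M\subset G'$ of a smooth section on the Hausdorff manifold $G'$ (its support lies in $\gamma\cdot\supp(K_t)\subset\gamma U\subset G'$, and by Remark \ref{r:suppKtgamma} the restriction to $M$ is supported near $M^\gamma$). Second, the invariance property \eqref{e:PsiInv} of $\tau$ still holds in the non-Hausdorff case, so the homogeneity $\L_{\rho(\E)}\mu=n_1\mu$ in the proof of Lemma \ref{l:n1vanishing} is available; a homogeneous Radon measure of degree $n_1$ has polynomial growth and is therefore tempered, and the original continuity argument applies directly. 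Your proposed partition-of-unity/dominated-convergence workaround is thus unnecessary (and as written is slightly awkward, since the cutoff $\rho$ spoils the homogeneity needed to absorb the $u^{-n_1}$ factor on each piece separately); the unmodified argument of Lemma \ref{l:n1vanishing} already suffices.
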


\subsection{Proper actions}
Let $G$ be a Lie groupoid and let $H$ be a Lie group that acts on $G$ by bisections. Since bisections act on $G$ by left multiplication, there is an induced action
\[ H\times G \rightarrow G, \qquad (\gamma,g)\mapsto \gamma g.\]
We say that the action of $H$ on $G$ by bisections is \emph{proper} if this induced action is proper in the usual sense. An action of a compact Lie group is automatically proper.

Let $H$ act properly on $G$ by bisections, and assume that $M^\gamma$ is non-empty. Then the results of Section \ref{s:groupactions} hold with $H$ in place of $\sf{K}$. Indeed the properties of actions of compact Lie groups that were used also hold for proper actions of Lie groups. We also had occasion to consider $M^{\tilde{\gamma}}$, a fixed-point set for a possibly non-proper action. But if $M^\gamma \subset G^\gamma$ is non-empty, then properness implies that the closure of the subgroup of $H$ generated by $\gamma$ is a compact Lie group $H_\gamma$, thus we are still able to deduce that $M^{\tilde{\gamma}}=M^{H_\gamma}$ is smooth.

In the case $H$ is non-compact, the K-theory groups appearing in Section \ref{s:equivtracepairing} should be further clarified. Instead, to avoid this, one can take equation \eqref{e:gradedtauind} as the definition of the `equivariant trace pairing' $\pair{\tau}{\tn{ind}(D)} \in C^\infty(H)^H$; the proof of Proposition \ref{p:tracepairingindependent} shows this is independent of the choice of $H$-equivariant parametrix. If $M^\gamma=\gamma \cap M$ is empty, then, by choosing a parametrix for $D$ supported sufficiently close to $M\subset G$, one can arrange that the supports of $\gamma (Q^+)^2$, $\gamma (R^-)^2$ do not intersect $M$, hence $\pair{\tau}{\tn{ind}(D)}(\gamma)=\tau^\gamma(\tn{ind}(D))=0$ by equation \eqref{e:gradedtauind}. The rest of the local calculation of the pairing in the following sections is unchanged. Thus, supposing $H$ acts properly on $G$ by bisections, Theorem \ref{t:fixedptformula} holds with $H$ in place of $\sf{K}$.

\section{Examples}\label{s:examples}
In the special case $G=\tn{Pair}(M)$ for a closed manifold $M$ with $\tau$ the trace given by integration over the diagonal in $M\times M$, Theorem \ref{t:fixedptformula} reduces to the Atiyah-Bott-Segal-Singer fixed point formula for the equivariant index of a Dirac operator. Indeed in this case $j^*\tau/|\nu|$ coincides with the functional $\Gamma(j^*\Lambda)\rightarrow \bC$ given by Berezin integration followed by integration over $M^\gamma$ with respect to the Riemannian volume density, and the formula is precisely that given in \cite[Theorem 6.16]{BerlineGetzlerVergne}. 

Another special case is for $G$ the holonomy groupoid of a foliation $\scr{F}$ of a closed manifold $M$, with trace $\tau$ given by a transverse measure. In this instance Theorem \ref{t:fixedptformula} recovers a result of Connes \cite{connes1979theorie} for the non-equivariant case and Heitsch-Lazarov \cite{heitsch1990lefschetz} in the equivariant case. (Note however that there are some differences in settings, for example, \cite{heitsch1990lefschetz} relax the condition that $\gamma$ belong to a compact Lie group.)

Below we explain the case where $A={}^bTM$ is the b-tangent bundle associated to a hypersurface, or more generally to a simple normal crossing divisor, in $M$. The corresponding $A$-Dirac operator is not elliptic in the usual sense because its symbol degenerates in a prescribed way along the hypersurfaces. In the final subsection we describe an analogue of the Atiyah-Hirzebruch vanishing theorem in our context.

\subsection{Normal crossing divisors}\label{s:normalcrossing}
Let $M^n$ be a closed connected manifold. A simple normal crossing divisor is a finite collection of hypersurfaces $\Z$ such that around each point $m \in M$ there is a coordinate chart $(U,\varphi)$ centred at $m$ such that for each $Z\in \Z$, $\varphi(U\cap Z)$ is an open subset of a coordinate hyperplane in $\bR^n$; the pair $(U,\varphi)$ will be called a normal crossing coordinate chart. In particular it follows that at most $n$ of the hypersurfaces can intersect at any point. 

The sheaf of vector fields tangent to all hypersurfaces in $\Z$ is generated locally, in a normal crossing chart $(U,\varphi=(x_1,...,x_n))$ with hypersurfaces $x_1=0,...,x_k=0$, by the vector fields
\begin{equation} 
\label{e:vectfields}
x_1\frac{\partial}{\partial x_1},\cdots,x_k\frac{\partial}{\partial x_k},\frac{\partial}{\partial x_{k+1}},\cdots \frac{\partial}{\partial x_n}. 
\end{equation}
It follows that the sheaf of such vector fields is locally free and finitely generated, hence is the sheaf of smooth sections of a vector bundle ${}^bTM$ called the b-tangent bundle. The dual vector bundle ${}^bT^*M$ is called the b-cotangent bundle. The natural map ${}^bTM\rightarrow TM$ and the restriction of the Lie bracket of vector fields makes $A={}^bTM$ into a Lie algebroid. An integration $G$ may be constructed by gluing or by a modified blow up construction (see for example \cite{gualtieri2017tropical, debordskandalisblowup, obster2021blow}). In any integration, there can be no arrows from points of $M\backslash \cup \Z$ to $\cup \Z$.

The desired trace on $\Psi^{-\infty}(G)$ will be based on principal value integration. Let $\I$ be the sheaf of smooth functions on $M$ that vanish along $\Z$. As in the case of ${}^bTM$, $\I$ is the sheaf of smooth sections of a line bundle that we also denote by $\I$. Smooth sections of the dual line bundle $\I^{-1}$ may be thought of as functions on $M$ with simple poles along $\Z$. The principal value integral is a linear functional
\[ \tn{pv}_\Z \int_M \colon \Gamma(\I^{-1}\otimes \Lambda_M)\rightarrow \bC \]
defined as follows. Choose an auxiliary Riemannian metric on $M$ and for $\epsilon>0$ let $M_\epsilon$ be the complement of a neighborhood of $\cup \Z$ of radius $\epsilon$. Then
\[ \tn{pv}_\Z \int_M f\otimes \alpha:=\lim_{\epsilon\rightarrow 0^+} \int_{M_\epsilon} f|_{M_\epsilon}\alpha|_{M_\epsilon} \]
where $f|_{M_\epsilon}$ is regarded as a smooth function on $M_\epsilon$. The definition is independent of the choice of auxiliary Riemannian metric. Using a partition of unity and suitable normal crossing charts, $\tn{pv}_\Z \int_M f\otimes \alpha$ may be expressed as a finite sum of usual principal value integrals of functions of the form $g/x_1\cdots x_k$, where $g$ is smooth and compactly supported. In particular, from the theory of distributions, we see that $\tn{pv}_\Z$ is a continuous linear functional.

Let
\[ \Omega:=\tn{det}({}^bT^*M)\boxtimes \bC, \qquad \Omega_M:=\tn{det}(T^*M)\boxtimes \bC. \]
The b-cotangent bundle ${}^bT^*M$ has a local frame dual to \eqref{e:vectfields}, whose elements are suggestively denoted
\[ \frac{\d x_1}{x_1},\cdots,\frac{\d x_k}{x_k},\d x_{k+1},\cdots, \d x_n.\]
A section of $\Omega$ can be thought of as a top degree form on $M$ with simple poles along $\Z$, i.e. there is an isomorphism:
\begin{equation}
\label{e:Omega}
\Omega \simeq \I^{-1}\otimes \Omega_M.
\end{equation}

From now on assume $TM$, ${}^bTM$ are oriented. On $M\backslash \cup \Z$, the anchor map switches from being orientation-preserving to orientation-reversing (and vice versa) each time a hypersurface is crossed. We say that the integration $G$ is \emph{even} if for every $g \in G$ such that $r(g),s(g)\in M\backslash \cup \Z$, the anchor map is either orientation-preserving or orientation-reversing at both $r(g)$ and $s(g)$; equivalently $r(g)$ can be reached from $s(g)$ by a smooth path crossing an even number of hypersurfaces in $\Z$. For example, for any integration $G'$, the connected component $G$ of $G'$ containing $M$ is an integration of ${}^bTM$ with this property.

The orientations on $TM$, ${}^bTM$ determine isomorphisms
\[ \Omega \simeq \Lambda, \qquad \Omega_M\simeq \Lambda_M.\]
Composing these with \eqref{e:Omega} yields an isomorphism
\begin{equation} 
\label{e:mapo}
o\colon \Lambda \xrightarrow{\sim} \I^{-1}\otimes \Lambda_M, \qquad f \mapsto (f)_o.
\end{equation}
In local coordinates, $o$ is the map
\begin{equation} 
\label{e:sgn}
\bigg|\frac{\d x_1 \cdots \d x_n}{x_1\cdots x_k}\bigg|\mapsto \tn{sgn}(\rho,x>0)\frac{|\d x_1\cdots \d x_n|}{x_1\cdots x_k} 
\end{equation}
where $\tn{sgn}(\rho,x>0)=\pm 1$ according to whether the anchor map $\rho$ is orientation-preserving or reversing on the connected component of $M\backslash \cup \Z$ containing the subset of the coordinate patch where $x_1,...,x_k>0$.

Define the continuous linear functional $\tau \colon \Gamma(\Lambda)\rightarrow \bC$ to be the composition of $o$ with principal value integration $\tn{pv}_\Z\int_M$: 
\[ \tau(f)=\tn{pv}_\Z \int_M (f)_o.\]
Note that the corresponding generalized section of $\Lambda_M\otimes \Lambda^{-1}$ is not locally integrable.

\begin{proposition}
\label{p:pvtrace}
Let $G$ be an even integration of ${}^bTM$. Then $\tau$ defines a trace on $\Psi^{-\infty}(G)$.
\end{proposition}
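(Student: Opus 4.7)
The strategy is to factor $\tau = \tn{pv}_\Z \int_M \circ\, o$ and to verify that each factor is compatible with the $\tn{Bis}(G)$-action; the composition will then be $\tn{Bis}(G)$-invariant as a generalized section of $\Lambda_M \otimes \Lambda^{-1}$, which by the criterion recalled in Section 3 is equivalent to the trace property. The evenness hypothesis enters precisely to ensure the equivariance of the algebraic factor $o$; the analytic factor $\tn{pv}_\Z \int_M$ is automatically invariant under any diffeomorphism preserving the divisor.

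First, I would verify that the principal value functional $\tn{pv}_\Z \int_M \colon \Gamma(\I^{-1}\otimes \Lambda_M) \to \bC$ is invariant under every diffeomorphism $\psi \colon M \to M$ preserving $\cup \Z$ setwise. By an ordinary change-of-variables argument, $\int_{\psi^{-1}(M_\epsilon^d)} \psi^* h = \int_{M_\epsilon^d} h$ for any metric $d$, and $\psi^{-1}(M_\epsilon^d) = M_\epsilon^{\psi^*d}$, so passing to $\epsilon \to 0^+$ and invoking the (independently established) metric-independence of the principal value yields $\tn{pv}_\Z \int_M \psi^*h = \tn{pv}_\Z \int_M h$. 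Because the orbit stratification of $G$ has $\cup \Z$ as the union of singular orbits (on $M\setminus \cup\Z$ the anchor is an isomorphism, so $G$ acts locally transitively there), every bisection $\gamma$ induces a diffeomorphism $\tilde{\gamma}$ that preserves $\cup \Z$, giving the desired $\tn{Bis}(G)$-invariance.

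Second, I would show that the bundle isomorphism $o$ is $\tn{Bis}(G)$-equivariant --- for the natural actions on source and target through $\gamma^A$ and $\tilde{\gamma}$ respectively --- precisely under the evenness hypothesis. This reduces to a pointwise sign check at $m \in M \setminus \cup \Z$. Equation \eqref{e:sgn} exhibits $o$ as the canonical orientation-based bundle map dressed with the sign factor $\tn{sgn}(\rho_m)$; combined with the intertwining relation $T\tilde{\gamma}_m \circ \rho_m = \rho_{\tilde{\gamma}(m)} \circ \gamma^A_m$, the potential failure of equivariance of $o$ between $m$ and $\tilde{\gamma}(m)$ is controlled by the ratio $\tn{sgn}(\rho_m)/\tn{sgn}(\rho_{\tilde{\gamma}(m)})$. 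The even condition states exactly that this ratio equals $+1$ along every arrow of $G$ between points of $M\setminus \cup \Z$; equivariance at the remaining points follows by continuity from the density of $M\setminus \cup \Z$ in $M$.

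Combining the two, $\tau = \tn{pv}_\Z \int_M \circ\, o$ is $\tn{Bis}(G)$-invariant, and therefore defines a trace on $\Psi^{-\infty}(G)$. The main technical difficulty is the careful sign bookkeeping of the second step: one must compare the $G$-action on $\Lambda = |\det A^*|$ (obtained from the Lie functor applied to conjugation in $G$) with the $G$-action on $\I^{-1} \otimes \Lambda_M$ (induced by pullback via $\tilde{\gamma}$), each equipped with its own orientation conventions, and verify that the single sign factor $\tn{sgn}(\rho)$ in $o$ exactly compensates the discrepancy between them precisely when $G$ is even.
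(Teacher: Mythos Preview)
Your approach is valid and genuinely different from the paper's. The paper works on $G$ rather than on $M$: it identifies $\delta\Lambda \simeq \I_G^{-1}\otimes\Lambda_G$ in two ways $o_s,o_r$ (applying $o$ to the $s^*$- or the $r^*$-factor), shows that fiber integration along $s$ (resp.\ $r$) is compatible with principal-value integration so that $\tau(s_*f)=\tn{pv}_{\Z_G}\int_G(f)_{o_s}$ and $\tau(r_*f)=\tn{pv}_{\Z_G}\int_G(f)_{o_r}$, and then compares $o_s$ with $o_r$ at an arrow $g$ with $r(g),s(g)\in M\setminus\cup\Z$; the resulting sign discrepancy is exactly the ratio $\tn{sgn}(\rho_{r(g)})/\tn{sgn}(\rho_{s(g)})$ you isolate. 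Your route stays on $M$ and checks invariance of the generalized section; it is conceptually cleaner and avoids introducing the auxiliary divisor $\Z_G$, but it relies on the \emph{converse} of the implication recorded in Section~3.2 (trace $\Rightarrow$ invariant). That converse does hold---it follows by localizing $f$ near an arrow, choosing a local bisection through it, and unwinding the definition of the $G$-representation on $\Lambda_M\otimes\Lambda^{-1}$---but you should say so or give a reference, since the paper does not. Relatedly, your argument should be phrased in terms of \emph{local} bisections rather than $\tn{Bis}(G)$ (a global bisection through a prescribed arrow need not exist); since your sign check is pointwise this costs nothing.
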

\begin{proof}
We must show that $\tau(s_*f)=\tau(r_*f)$ for all $f \in \Gamma(\delta \Lambda)$ with compact support. By \eqref{e:mapo},
\[ \delta \Lambda=r^*\Lambda \otimes s^*\Lambda\simeq r^*\Lambda \otimes s^*(\I^{-1}\otimes \Lambda_M).\]
Since $r^*A\simeq \ker(Ts)$, the pullback $r^*\Lambda$ is isomorphic to the density bundle along the $s$ fibers. As $s$ is a submersion, there is a short exact sequence of vector bundles
\[ 0 \rightarrow \ker(Ts)\rightarrow TG\rightarrow s^*TM\rightarrow 0.\]
Hence there is a canonical (independent of the choice of splitting) isomorphism $r^*\Lambda \otimes s^*\Lambda_M\simeq \Lambda_G$, the density bundle of $G$. On the other hand $s^*(\I^{-1})=\I_G^{-1}$, where $\I_G$ is the line bundle associated to the normal crossing divisor $\Z_G=\{s^{-1}(Z)\mid Z \in \Z\}$. Note also that $s^{-1}(Z)=r^{-1}(Z)$ since $Z$ is a $G$-invariant subset of $M$, so it does not matter where $s$ or $r$ is used in the definition of $\Z_G$. Let
\[ o_s \colon \delta \Lambda \xrightarrow{\sim} \I^{-1}_G\otimes \Lambda_G \]
be the composition of these two isomorphisms. fiber integration ($s_*$) is compatible with principal value integration,
\[ \tau(s_*f)=\tn{pv}_\Z \int_M (s_*f)_o=\tn{pv}_{\Z_G}\int_G (f)_{o_s}.\]
Replacing $s$ with $r$ throughout yields a similar expression
\[ \tau(r_*f)=\tn{pv}_\Z\int_M(r_*f)_o=\tn{pv}_{\Z_G}\int_G(f)_{o_r}.\]
It remains to compare the isomorphisms $o_s$, $o_r$. Let $g \in G$ be such that $r(g),s(g) \in M\backslash \cup \Z$. Let $x_1,...,x_n$ (resp. $y_1,...,y_n$) be coordinates in a normal crossing chart near $r(g)$ (resp. $s(g)$), and consider
\begin{equation} 
\label{e:LambdaSections}
\bigg|\frac{\d x_1 \cdots \d x_n}{x_1\cdots x_k}\bigg|_{r(g)}\otimes\bigg|\frac{\d y_1 \cdots \d y_n}{y_1\cdots y_l}\bigg|_{s(g)} \in (r^*\Lambda\otimes s^*\Lambda)_g. 
\end{equation}
We may assume the coordinates are chosen such that $r(g)$ (resp. $s(g)$) lies in the subset of the coordinate patch where $x_1,...,x_k>0$ (resp. $y_1,...,y_l>0$). Applying $o_s$, $o_r$ and using equation \eqref{e:sgn} we obtain
\[ \tn{sgn}(\rho,y>0)\bigg|\frac{\d x_1 \cdots \d x_n}{x_1\cdots x_k}\bigg|\otimes\frac{|\d y_1 \cdots \d y_n|}{y_1\cdots y_l}, \qquad \tn{sgn}(\rho,x>0)\frac{|\d x_1 \cdots \d x_n|}{x_1\cdots x_k}\otimes\bigg|\frac{\d y_1 \cdots \d y_n}{y_1\cdots y_l}\bigg| \]
in the two cases. Since $x_1,...,x_k,y_1,...,y_l>0$ at the points in question, we can drop the absolute values in the denominators and hence the two expressions agree except possibly for the sign $\tn{sgn}(\rho,x>0)/\tn{sgn}(\rho,y>0)$, which, since $G$ is an even integration, is $+1$.
\end{proof}
\begin{remark}
For the non-closed setting where $\Z$ is the boundary divisor of a compact manifold with boundary, the definition of $\tau$ above does not work. A variant of $\tau$, the `b-trace functional' \cite[Chapter 4.20]{melrose1993atiyah}, still plays a key role, although it depends on an additional choice and is not in fact a trace.
\end{remark}

We explain the specialization of Theorem \ref{t:fixedptformula} to the case $A={}^bTM$. By Proposition \ref{p:transversefixedpt}, $M^\gamma$ inherits a normal crossing divisor $\Z^\gamma=\{Z\cap M^\gamma \mid Z \in \Z, Z\cap M^\gamma \ne \emptyset\}$. The linear functional
\[ \frac{j^*\tau}{|\nu|}\colon \Gamma(j^*\Lambda)\rightarrow \bC \]
is the composition of 
\[ \tn{pv}_{\Z^\gamma}\int_{M^\gamma} \colon \Gamma(j^*\I^{-1}\otimes \Lambda_{M^\gamma})\rightarrow \bC \]
with the isomorphism
\[ o^\gamma \colon j^*\Lambda \rightarrow j^*\I^{-1}\otimes \Lambda_{M^\gamma}, \qquad o^\gamma=|\nu|^{-1}j^*o. \]
Theorem \ref{t:fixedptformula} yields the following.
\begin{corollary}
\label{c:fixedptb}
Let $M$ be a closed oriented even-dimensional manifold with simple normal crossing divisor $\Z$. Assume ${}^bTM$ is oriented with fiber metric $h$, and let $G$ be an even integration of ${}^bTM$. Let $\sf{K}$ be a compact Lie group that acts on $G$ by bisections preserving the metric and orientation of ${}^bTM$. Let $(E,c,\nabla)$ be a $\sf{K}$-equivariant $\Cl({}^bT^*M)$-module with Clifford ${}^bTM$-connection and let $D=c\circ \nabla$ be the corresponding b-Dirac operator. Let $\gamma \in \sf{K}$ and $j\colon M^\gamma \hookrightarrow M$ the inclusion. Let $\tau(-)=\tn{pv}_\Z\int_M (-)_o$. Then
\[ \tau^\gamma(\tn{ind}(D))=\tn{pv}_{\Z^\gamma}\int_{M^\gamma}\bigg(\frac{\Ahat({}^bTM^\gamma)\tn{\textbf{ch}}^{\gamma}(E/S)}{(2\pi \i)^{n_0/2}\i^{n_1/2}\tn{det}^{1/2}(1-\gamma_1 e^{-R_1})}\bigg)_{[n],o^\gamma}.\]
\end{corollary}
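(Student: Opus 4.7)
The strategy is to view Corollary \ref{c:fixedptb} as a direct application of Theorem \ref{t:fixedptformula}, with the work being to verify the hypotheses and to translate the ingredients into the $b$-geometric language on the right hand side.

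First, I will check that the setup of Theorem \ref{t:fixedptformula} is met. The Lie algebroid $A={}^bTM$ is even rank, oriented and metrised by assumption, $M$ is closed, and the $\sf{K}$-action on $G$ preserves this structure. Proposition \ref{p:pvtrace} establishes that $\tau$ is a trace, and continuity with respect to the natural topology on $\Gamma(\Lambda)$ is the standard fact that principal value integration is continuous on smooth sections of fixed compact support (verifiable directly from the $\epsilon$-truncation definition). Finally, $\tau$ manifestly factors through restriction to $M$.

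Next I will analyze the pullback $j^*\tau/|\nu|$. By Proposition \ref{p:transversefixedpt}, the anchor $\rho\colon {}^bTM\to TM$ is transverse to $TM^\gamma$. For any $p\in M^\gamma\cap Z$ with $Z\in\Z$, the image $\rho({}^bT_pM)\subseteq T_pZ$ since all sections of ${}^bTM$ are tangent to $Z$; transversality then forces $T_pZ+T_pM^\gamma=T_pM$, so $Z$ meets $M^\gamma$ transversely. This shows that $\Z^\gamma$ is a simple normal crossing divisor on $M^\gamma$ and allows the selection of local coordinates $(x_1,\ldots,x_k,u_1,\ldots,u_{m-k},v_1,\ldots,v_l)$ near $p$ in which the hypersurfaces of $\Z$ through $p$ are $\{x_i=0\}$, the submanifold $M^\gamma$ is $\{v=0\}$, and the divisor $\Z^\gamma$ on $M^\gamma$ is cut out by $\{x_i|_{M^\gamma}=0\}$. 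A local computation in such a chart, combining the fact that the principal value regularization involves only the $x$-variables with Fubini's theorem in the transverse $v$-directions, identifies $j^*\tau$ with $\tn{pv}_{\Z^\gamma}\int_{M^\gamma}$ composed with $j^*o$. Dividing by $|\nu|\in\Gamma(\Lambda_{\dN})$ converts $j^*\Lambda_M$ to $\Lambda_{M^\gamma}$ via $j^*\Lambda_M\simeq\Lambda_{\dN}\otimes\Lambda_{M^\gamma}$, so the functional $j^*\tau/|\nu|$ on $\Gamma(j^*\Lambda)$ becomes precisely $\tn{pv}_{\Z^\gamma}\int_{M^\gamma}(-)_{o^\gamma}$, where $o^\gamma=|\nu|^{-1}j^*o$ as in the paragraph preceding the corollary.

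The main technical obstacle lies in justifying the decoupling of the principal value regularization between transverse and longitudinal directions to $M^\gamma$; this rests on the transversality argument above, which guarantees that the hypersurfaces contributing to the singularity of $\tau$ are simultaneously hypersurfaces of $\Z^\gamma$ and that their defining functions can be chosen as functions on $M^\gamma$. Once this identification is in hand, substitution into \eqref{e:fixedptformula} of Theorem \ref{t:fixedptformula} immediately yields the stated formula.
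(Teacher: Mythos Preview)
Your proposal is correct and follows essentially the same approach as the paper: verify the hypotheses of Theorem \ref{t:fixedptformula} (the trace property coming from Proposition \ref{p:pvtrace}), use Proposition \ref{p:transversefixedpt} to see that $\Z^\gamma$ is a normal crossing divisor on $M^\gamma$, identify $j^*\tau/|\nu|$ with $\tn{pv}_{\Z^\gamma}\int_{M^\gamma}(-)_{o^\gamma}$, and then specialize \eqref{e:fixedptformula}. The paper in fact leaves the argument at the level of stating this identification in the paragraph preceding the corollary and then invoking Theorem \ref{t:fixedptformula}; you supply somewhat more detail, notably the observation that $\rho({}^bT_pM)\subseteq T_pZ$ forces each $Z\in\Z$ (and more generally each stratum) to meet $M^\gamma$ transversely, which is exactly what makes the induced divisor simple normal crossing and allows the principal value pullback to be computed coordinate-wise. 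One small point you leave implicit is the identification $j^!({}^bTM)\simeq {}^bTM^\gamma$ needed to match $\Ahat(j^!A)$ with $\Ahat({}^bTM^\gamma)$; this follows from the same transversality.
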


\begin{remark}
Slightly more generally, one gets a similar formula in the presence of an auxiliary foliation. Let $\F$ be a foliation with leaves transverse to all strata of the simple normal crossing divisor $\Z$. There is a Lie algebroid ${}^bT\F$ whose smooth sections are vector fields tangent to the leaves of $\F$ that are also tangent to $\Z$. An $\F$-invariant generalized section of $|\tn{det}(TM/T\F)^*|$ determines a continuous linear functional $\Gamma(\Lambda)\rightarrow \bC$, $\Lambda=|\tn{det}({}^bT\F)^*|$ using principal value integration over $M$ as in the case where $\F$ is trivial discussed above. Then Theorem \ref{t:fixedptformula} gives a fixed-point formula for the trace pairing with an equivariant family of b-Dirac operators on the leaves of a foliation.
\end{remark}

\subsection{A vanishing theorem}
A well-known consequence of the fixed-point formula for the index of a Dirac operator is a vanishing theorem of Atiyah and Hirzebruch: Let $M$ be a closed spin manifold equipped with a non-trivial $S^1$ action. Then the A-roof genus $\Ahat(M)=\int_M \Ahat(TM)=0$ and in fact the $S^1$-equivariant index of the Spin Dirac operator is $0$.

An analogous result can be deduced in our context. Recall that the $\d_A$-cohomology class of $\Ahat(A)$ depends only on $A$ and not on the choice of connection. By a minor extension of \cite[Theorem 5.1]{evens1996transverse}, the pairing $\pair{\tau}{\Ahat(A)_{[n]}}$ depends only on $\tau$ and $A$; this pairing plays the role of the A-roof genus. We say that $\sf{K}$ acts non-trivially if $M^\gamma \ne M$ for some $\gamma \in \sf{K}$.
\begin{corollary}
\label{c:rigidity}
Let $G$ be a Hausdorff Lie groupoid over a compact unit space $M$ with even rank oriented Lie algebroid $A$. Suppose $S^1$ acts non-trivially on $G$ by bisections. Let $\tau \colon \Psi^{-\infty}(G)\rightarrow \bC$ be a continuous trace that factors through restriction to the unit space. Let $D$ be the $A$-Spin Dirac operator associated to an $S^1$-invariant Spin structure on $A$. Then
\[ \pair{\tau}{\tn{ind}(D)}=0 \in C^\infty(S^1) \quad \text{and} \quad \pair{\tau}{\Ahat(A)_{[n]}}=0.\]
\end{corollary}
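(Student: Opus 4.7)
The plan is to prove (a) first; (b) then follows by specializing (a) at $\gamma=1$, where $M^\gamma=M$, $n_1=0$, $\gamma_1$ is trivial, and $\tn{\textbf{ch}}^1(S/S)=1$ for the untwisted Spin bundle $E=S$, so Theorem \ref{t:fixedptformula} reduces to $\pair{\tau}{\tn{ind}(D)}(1)=(2\pi\i)^{-n/2}\pair{\tau}{\Ahat(A)_{[n]}}$, and the non-vanishing of $(2\pi\i)^{-n/2}$ forces $\pair{\tau}{\Ahat(A)_{[n]}}=0$.

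For (a), set $f(\gamma)=\pair{\tau}{\tn{ind}(D)}(\gamma)\in C^\infty(S^1)$. Topological generators of $S^1$ (those $\gamma$ with $\overline{\langle\gamma\rangle}=S^1$) form a dense subset, so by continuity it suffices to prove $f(\gamma)=0$ for such $\gamma$. For any topological generator, $M^\gamma=M^{S^1}$, and the non-triviality assumption gives $M^{S^1}\subsetneq M$, whence the normal bundle $N=\dN_M M^{S^1}$ has positive even rank $n_1>0$. Applying Theorem \ref{t:fixedptformula} with $j_0\colon M^{S^1}\hookrightarrow M$, and using $\tn{\textbf{ch}}^\gamma(S/S)=1$, gives
\[ f(\gamma)=\bigg\la \frac{j_0^*\tau}{|\nu_0|},\bigg(\frac{\Ahat(j_0^!A)}{(2\pi\i)^{n_0/2}\i^{n_1/2}\tn{det}^{1/2}(1-\gamma_1 e^{-R_1})}\bigg)_{[n]}\bigg\ra. \]
Decomposing $N=\bigoplus_a N_a$ into $S^1$-invariant rank-$2$ subbundles with non-zero integer weights $k_a$ and curvatures $\Omega_a=\i\omega_a$, the calculation reproduced in the proof of Theorem \ref{t:fixedptformula} expresses the $a$-th denominator factor as $(\gamma^{k_a/2}e^{-\i\omega_a/2}-\gamma^{-k_a/2}e^{\i\omega_a/2})/\i$ after analytically continuing from $\gamma=e^{\i\theta}\in S^1$ to $\gamma\in\bC^*$. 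The $S^1$-invariance of the Spin structure forces $\sum_a k_a\in 2\bZ$ on each connected component of $M^{S^1}$; consequently the full product has only integer $\gamma$-exponents, so $f$ extends to a single-valued meromorphic function $\tilde f$ on $\bC^*$.

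The possible poles of $\tilde f$ occur at $\gamma$ with $\gamma^{k_a}=1$, all of which lie on $S^1$. Since $f$ is smooth on $S^1$, $\tilde f$ is bounded on $S^1$ near each such $\gamma_0$; but for a meromorphic function on $\bC^*$ the principal part of a non-removable pole at $\gamma_0$ grows without bound along any smooth curve through $\gamma_0$, so the singularity must be removable, and $\tilde f$ is holomorphic on all of $\bC^*$. As $|\gamma|\to\infty$, the degree-zero part $(\gamma^{k_a/2}-\gamma^{-k_a/2})/\i$ of the $a$-th factor has magnitude $\asymp|\gamma|^{|k_a|/2}$, with higher-degree $\omega_a$-corrections bounded on compact $M^{S^1}$; symmetrically, $\asymp|\gamma|^{-|k_a|/2}$ as $|\gamma|\to 0$. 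Since each $k_a\ne 0$ and $n_1>0$ imply $\sum_a|k_a|>0$, we obtain $|\tilde f(\gamma)|=O(|\gamma|^{-\sum_a|k_a|/2})$ at infinity and $O(|\gamma|^{\sum_a|k_a|/2})$ at zero. Thus $\tilde f$ extends holomorphically to $\bP^1$ vanishing at $0$ and $\infty$, hence is identically zero by Liouville's theorem applied on $\bP^1$.

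The main obstacle is justifying the removable-singularity step at bad roots of unity $\gamma_0\in S^1$: the $M^{S^1}$-fixed-point formula breaks down at $\gamma_0$, whereas Theorem \ref{t:fixedptformula} applies at $\gamma_0$ itself with a possibly larger fixed-point set $M^{\gamma_0}\supsetneq M^{S^1}$. The smoothness of $f$ on $S^1$ (ensured by Theorem \ref{t:fixedptformula} at every $\gamma\in S^1$) bridges these two formulas, but the rigorous matching---i.e., verifying that the meromorphic extension $\tilde f$ of the $M^{S^1}$-formula really has finite limits at $\gamma_0$ coinciding with the $M^{\gamma_0}$-formula value---could alternatively be proved directly via a careful specialization of the characteristic forms as weights collide under $\gamma^{k_a}\to 1$.
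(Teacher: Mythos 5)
Your proof follows the same Atiyah--Hirzebruch/Lawson--Michelsohn strategy as the paper: evaluate the fixed-point formula at topological generators (where $M^\gamma=M^{S^1}\subsetneq M$), observe the resulting expression extends meromorphically in $\gamma$ to $\bC^\times$, use smoothness of $\pair{\tau}{\tn{ind}(D)}$ on $S^1$ to remove poles, estimate decay at $0$ and $\infty$, and conclude by Liouville; the paper's proof is a compressed version of exactly this, deferring details such as single-valuedness of the $\gamma^{k_a/2}$ factors to the classical reference. Your additional care with the $\sum_a k_a\in 2\bZ$ condition and the explicit decay exponent are correct refinements of what the paper leaves implicit.
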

\begin{proof}
The proof is similar to the classical case, cf. \cite[Section IV.3, pp.293--295]{LawsonMichelsohn}. For $\gamma=z=e^{\i \theta}$ in a dense subset of $S^1=U(1)$, the fixed-point set $M^z=G^z\cap M=G^{S^1}\cap M=F$ does not depend on $z$. For such $z$ the fixed-point formula \eqref{e:fixedptformula} reads
\[ \pair{\tau}{\tn{ind}(D)}(z)=\bigg\la \frac{j^*\tau}{|\nu|},\bigg(\frac{\Ahat(j^!A)}{(2\pi \i)^{n_0/2}\i^{n_1/2}\tn{det}^{1/2}(1-z_1 e^{-R_1})}\bigg)_{[n]}\bigg\ra,\]
where $j\colon F\hookrightarrow M$, the localized relative Chern character factor in the numerator being $1$ in the Spin case. The LHS of this equation is a smooth function $\chi_1(z)$ defined for $z\in U(1)\subset \bC^\times$. By expanding the characteristic classes as in \cite[Section IV.3, p.294]{LawsonMichelsohn} (for example), the RHS admits a meromorphic extension $\chi_2(z)$ for $z \in \bC^\times \backslash R$, with a branch cut along the positive real axis $R$ (used to define a square root $z^{1/2}$). Poles of $\chi_2(z)$ may occur where $\det(1-z_1)=0$, that is, $z_1$ has $1$ as an eigenvalue on a fiber of the normal bundle to $F$, and in particular $z$ is a root of unity. The equality $\chi_1(z)=\chi_2(z)$ for $z \in U(1)$ (and where $\chi_2$ is defined) implies that $\chi_2$ is bounded, hence has no poles. Let $\chi_2^{\pm}$ be the two local holomorphic branches of $\chi_2$ near $z=1$. The equality $\chi_2^{\pm}(z)=\chi_1(z)$ for $z \in U(1)$ near $1$ with $\pm \tn{Im}(z)>0$, forces the $n$-th derivatives of $\chi_2^+$, $\chi_2^-$ at $z=1$ to be equal to each other (both being expressible in terms of $\partial_\theta$-derivatives of $\chi_1$ at $z=1$), for all $n\ge 0$. By analyticity $\chi_2^+=\chi_2^-$, hence $\chi_2$ extends over the cut $R$ to a holomorphic function $\chi$ on $\bC^\times$ extending $\chi_1$. The same argument as in the classical case, based on the identity
\[ \frac{\pm 1}{z^{1/2}-z^{-1/2}}=\frac{\pm z^{1/2}}{z-1}=\frac{\pm z^{-1/2}}{1-z^{-1}}, \]
shows that $\chi(z)$ has limits as $z \rightarrow 0,\infty$ equal to $0$. It follows that $\chi(z)$ extends analytically to the Riemann sphere $\wh{\bC}$ and hence vanishes identically by Liouville's theorem. Thus $\pair{\tau}{\tn{ind}(D)}(z)=0$ for all $z \in S^1$, and evaluating at $z=1$ gives $\pair{\tau}{\Ahat(A)_{[n]}}=0$.
\end{proof}
With additional hypotheses as in Theorem \ref{t:nonHausdorfffixedpt}, a similar statement holds in the non-Hausdorff case. We may apply Corollary \ref{c:rigidity} to the example described in Section \ref{s:normalcrossing}.
\begin{corollary}
Let $M$ be a closed oriented manifold admitting a non-trivial $S^1$-action and an $S^1$-invariant divisor $\Z$ such that the corresponding $b$-tangent bundle ${}^bTM$ is Spin. Then the A-roof genus $\Ahat(M)=\int_M \Ahat(TM)=0$.
\end{corollary}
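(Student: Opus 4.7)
The plan is to apply Corollary \ref{c:rigidity} to the setting of Section \ref{s:normalcrossing}, with $A={}^bTM$, and then identify the resulting trace pairing with the ordinary $\Ahat$-genus integral $\int_M \Ahat(TM)$. Concretely, fix an even integration $G$ of ${}^bTM$ (which exists, possibly after passing to the connected component of $M$ in some integration), and let $\tau$ be the principal value trace of Proposition \ref{p:pvtrace}. Since $G$ is typically non-Hausdorff, the relevant form of rigidity is the non-Hausdorff extension of Corollary \ref{c:rigidity} that follows from Theorem \ref{t:nonHausdorfffixedpt}; admissibility of $\tau$ is immediate from the definition, as the non-Hausdorff locus is contained in the $G$-invariant open set $\cup \Z$ which is a $\tau$-Radon-measure-zero divisor.

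The hypotheses of the corollary are set up as follows. The $S^1$-action on $M$ preserves $\Z$ and the orientations, and by Section \ref{s:groupactions} it lifts to an action on $G$ by bisections. Non-triviality of the $S^1$-action on $M$ combined with Proposition \ref{p:fixedpoint} ensures $M^\gamma \neq M$ for some $\gamma \in S^1$, so the hypothesis ``$S^1$ acts non-trivially by bisections'' holds. The assumption that ${}^bTM$ is Spin provides an $S^1$-equivariant spinor bundle $S$ and an associated ${}^bTM$-Dirac operator $D$. Corollary \ref{c:rigidity} then yields
\[ \pair{\tau}{\Ahat({}^bTM)_{[n]}}=0. \]

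The remaining content is the identification $\pair{\tau}{\Ahat({}^bTM)_{[n]}}=\int_M \Ahat(TM)$. Choose the $b$-Levi-Civita connection on ${}^bTM$; its restriction to $M\setminus \cup \Z$ coincides with the ordinary Levi-Civita connection on $TM$ via the anchor $\rho$, and by continuity the Chern--Weil representatives satisfy $\Ahat({}^bTM) = \rho^* \Ahat(TM)$ as b-forms on $M$. Now take the top-degree component and compute the pairing. In local coordinates $(x_1,\ldots,x_n)$ near a single hypersurface $Z=\{x_1=0\}$ (the general normal-crossing case is analogous), writing $\Ahat(TM)_{[n]} = g\, dx_1\wedge\cdots\wedge dx_n$, one has
\[ \rho^*\Ahat(TM)_{[n]} = x_1 g \cdot \tfrac{dx_1}{x_1}\wedge dx_2\wedge\cdots\wedge dx_n. \]
Applying the isomorphism $o$, the factor $x_1$ cancels the pole $\tfrac{1}{x_1}$, so $(\rho^*\Ahat(TM)_{[n]})_o$ extends to a \emph{smooth} density on $M$ equal, up to the global sign determined by the chosen orientations of ${}^bTM$ and $TM$, to $\Ahat(TM)_{[n]}$ as a density. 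In particular the principal value integral reduces to the ordinary integral, yielding $\pair{\tau}{\rho^*\Ahat(TM)_{[n]}} = \int_M \Ahat(TM)$.

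The one delicate step to verify is the final sign in the previous paragraph: one must check that the $\tn{sgn}(\rho, x>0)$ factor entering the definition of $o$ combines with the zero of $\rho^*\alpha$ along each hypersurface to produce the standard, unsigned $\Ahat$-integral, consistently across coordinate patches and across the components of $M\setminus\cup\Z$. This is the main obstacle; once the orientations are pinned down compatibly (using that ${}^bTM$ is oriented and that $G$ is even, so that the sign $\tn{sgn}(\rho)$ is a $G$-invariant function on $M\setminus \cup \Z$), the cancellation of the $x_1$ factor against the $\tfrac{1}{x_1}$ is a local matter and the identification is complete. Combining the two halves,
\[ 0 = \pair{\tau}{\Ahat({}^bTM)_{[n]}} = \pair{\tau}{\rho^*\Ahat(TM)_{[n]}} = \int_M \Ahat(TM) = \Ahat(M), \]
as required.
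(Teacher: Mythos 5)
Your overall strategy matches the paper's: apply Corollary \ref{c:rigidity} to get $\pair{\tau}{\Ahat({}^bTM)_{[n]}}=0$, then identify this pairing with $\int_M\Ahat(TM)$. However, there is a genuine gap in the identification step, stemming from the choice of connection.

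You take the $b$-Levi-Civita connection on $A={}^bTM$ and assert that ``by continuity the Chern--Weil representatives satisfy $\Ahat({}^bTM)=\rho^*\Ahat(TM)$ as $b$-forms,'' then compute locally as if $\Ahat(TM)_{[n]}=g\,\d x_1\wedge\cdots\wedge\d x_n$ with $g$ smooth. These two steps are not compatible. The $b$-Levi-Civita connection, viewed over $M\setminus\cup\Z$ via the anchor isomorphism, is the Levi-Civita connection of the Riemannian metric induced by $h$, which \emph{degenerates} along $\Z$ (e.g.\ with cylindrical-end behavior); it is not the Levi-Civita connection of any smooth metric on $TM$. Consequently $\Ahat(TM)$ in this sense is only defined on $M\setminus\cup\Z$ and does not in general extend to a smooth form on $M$. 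Equivalently, if you write $\Ahat({}^bTM,\nabla^{h,\tn{LC}})_{[n]}=f\cdot\tfrac{\d x_1}{x_1}\wedge\d x_2\wedge\cdots\wedge\d x_n$ in the local $b$-frame, the coefficient $f$ is smooth but has no reason to vanish on $\{x_1=0\}$, so under $o$ the resulting density $\pm f\,|\d x_1\cdots\d x_n|/x_1$ has a simple pole and the cancellation you invoke does not occur. The principal value integral is then genuinely a principal value, not an ordinary integral of a smooth density.

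The fix, which is what the paper does, is to exploit the statement recalled just before Corollary \ref{c:rigidity}: the pairing $\pair{\tau}{\Ahat(A)_{[n]}}$ depends only on $\tau$ and $A$, not on the $A$-connection used to form the Chern--Weil representative. One then chooses not the $b$-Levi-Civita connection, but an $A$-connection on ${}^bTM$ obtained by restricting a \emph{smooth} connection on $TM$ along the anchor. With that choice the Chern--Weil form $\Ahat({}^bTM)$ is literally $\rho^*\Ahat(TM)$ for a nonsingular $\Ahat(TM)$, so in local coordinates $f=x_1 g$ with $g$ smooth, the pole cancels, and the pairing reduces to $\int_M\Ahat(TM)$ as you intended. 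Once the connection is corrected, the sign bookkeeping you flag as the ``main obstacle'' works exactly as in the proof of Proposition \ref{p:pvtrace} and is not the real difficulty; the real difficulty is the connection choice, which your argument gets wrong.
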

\begin{remark} 
This result can be deduced by more conventional means: using an argument similar to \cite[Appendix]{BLSbsympl}, if ${}^bTM$ is spin then some finite covering space of $M$ is spin, and applying the classical vanishing theorem to the cover implies $\Ahat(M)=0$. However we find the proof based on Corollary \ref{c:rigidity} interesting and so include it here as well.
\end{remark}
\begin{proof}
We have
\[ \Ahat(M)=\int_M \Ahat(TM)=\int_{M\backslash \cup \Z}\Ahat(TM)=\int_{M\backslash \cup \Z}\Ahat({}^bTM)=\pair{\tau}{\Ahat({}^bTM)_{[n]}},\]
where $\tau$ is the trace defined in Section \ref{s:normalcrossing} and we choose the Chern-Weil representative $\Ahat({}^bTM)$ constructed using the restriction of a connection on $TM$ along the anchor map ${}^bTM\rightarrow TM$ so that $\Ahat({}^bTM)=\Ahat(TM)$ is a non-singular differential form. The right hand side vanishes by Corollary \ref{c:rigidity}.
\end{proof}

\printbibliography

\end{document}